\numberwithin{equation}{section}
\newtheorem{Lemma}[equation]{Lemma}
\newtheorem{Theorem}[equation]{Theorem}
\newtheorem{Corollary}[equation]{Corollary}
\theoremstyle{definition}  %% makes all of the theorem environments which follow appear in \rm
\newtheorem{Definition}[equation]{Definition}
\newtheorem{Remark}[equation]{Remark}
\newtheorem{Example}[equation]{Example}
\newcommand{\NEarrow}{\mathbin{\rotatebox[origin=c]{45}{$\Rightarrow$}}}
\newcommand{\SEarrow}{\mathbin{\rotatebox[origin=c]{-45}{$\Rightarrow$}}}
\renewcommand{\UParrow}{\mathbin{\rotatebox[origin=c]{90}{$\Rightarrow$}}}
\renewcommand{\DOWNarrow}{\mathbin{\rotatebox[origin=c]{-90}{$\Rightarrow$}}}
\newcommand\Comment[2][\relax]{\space\par\medskip\noindent%
   \fbox{\begin{minipage}{\textwidth}\textbf{Comment\ifx\relax#1\else---#1\fi}\newline%
        #2\end{minipage}}\medskip
}
\def\b1{\text{\boldmath$1$}}
\def\bw{\text{\boldmath$w$}}
\newcommand{\Z}{\mathbb{Z}}
\def\phi{{\varphi}}
\newcommand{\la}{\lambda}
\newcommand{\ZZ}{{\mathbb Z}}
\newcommand{\NN}{{\mathbb N}}
\def\col{{\mathtt{c}}}
\newcommand{\Nodes}{\mathsf N}
\def\b{\mathfrak{b}}
\def\k{\Bbbk}
\def\T{{\mathtt T}}
\def\L{{\mathtt L}}
\def\R{{\mathtt R}}
\def\U{{\mathtt U}}
\newcommand{\Xlet}{{\mathscr X}}
\newcommand{\Ylet}{{\mathscr Y}}
  \gdef\set#1{\mathinner{\lbrace\,{\mathcode`\|"8000%
  \let|\midvert #1}\,\rbrace}}
\def\midvert{\egroup\mid\bgroup}
\colorlet{darkgreen}{green!50!black}
\tikzset{dots/.style={very thick,loosely dotted},
         greendot/.style={fill,circle,color=darkgreen,inner sep=1.5pt,outer sep=0},
         blackdot/.style={fill,circle,color=black,inner sep=1.5pt,outer sep=0},
         graydot/.style={fill,circle,color=gray,inner sep=1.1pt,outer sep=0}
}
\def\greendot(#1,#2){\node[greendot] at(#1,#2){}}
\def\blackdot(#1,#2){\node[blackdot] at(#1,#2){}}
\def\graydot(#1,#2){\node[graydot] at(#1,#2){}}
\newenvironment{braid}{% sets defaults for the braid diagrams
  \begin{tikzpicture}[baseline=6mm,black,line width=1pt, scale=0.32,
                      draw/.append style={rounded corners},
                      every node/.append style={font=\fontsize{5}{5}\selectfont}]%
  }{\end{tikzpicture}
}
\def\Grid(#1,#2){%  draws a coordinate grid inside a braid diagram
  \draw[very thin,gray,step=2mm] (0,0)grid(#1,#2);
  \draw[very thin,darkgreen,step=10mm] (0,0)grid(#1,#2);
}
\newcommand\Tableau[2][\relax]{
  \begin{tikzpicture}[scale=0.6,draw/.append style={thick,black}]
    \ifx\relax#1\relax%
    \else % shade the boxes in #1
      \foreach\box in {#1} { \filldraw[blue!30]\box+(-.5,-.5)rectangle++(.5,.5); }
    \fi
    \newcount\row\newcount\col
    \row=0
    \foreach \Row in {#2} {
       \col=1
       \foreach\k in \Row {
          \draw(\the\col,\the\row)+(-.5,-.5)rectangle++(.5,.5);
          \draw(\the\col,\the\row)node{\k};
          \global\advance\col by 1
       }
       \global\advance\row by -1
    }
  \end{tikzpicture}
}
\newcommand\YoungDiagram[2][\relax]{
  \begin{tikzpicture}[scale=0.6,draw/.append style={thick,black}]
    \ifx\relax#1\relax%
    \else % shade the boxes in #1
    \foreach\box in {#1} {
      \filldraw[blue!30]\box rectangle ++(1,1);
    }
    \fi
    \newcount\row
    \row=0
    \foreach \col in {#2} {
       \draw(1,\the\row)grid ++(\col,1);
       \global\advance\row by -1
    }
  \end{tikzpicture}
}
\begin{document}

\title[Super RSK correspondence with symmetry]{{\bf Super RSK correspondence with symmetry}}

\author{\sc Robert Muth}
\address{Dept. of Mathematics\\ Tarleton State University\\
Stephenville\\ TX 76401, USA}
\email{muth@tarleton.edu}

\begin{abstract}
Super RSK correspondence is a bijective correspondence between superbiwords and pairs of semistandard supertableaux. Such a bijection was given by Bonetti, Senato and Venezia, via an insertion algorithm closely related to Schensted insertion. Notably, the symmetry property satisfied by the classical RSK bijection holds only in special cases under this bijection. We present a new super RSK bijection, based on the mixed insertion process defined by Haiman, where the symmetry property holds in complete generality.
\end{abstract}

\maketitle

\section{Introduction}\label{introsec}
The work of Robinson \cite{Robinson} in 1938, and Schensted \cite{Schensted} in 1961, describes a bijection between permutations and pairs of same-shape standard tableaux, now known as the Robinson-Schensted (RS) correspondence. A key ingredient in the bijection is an algorithm called {\em Schensted insertion}. In 1970, Knuth showed that Schensted insertion could be adapted to a more general setting to achieve a bijection between two-line arrays of letters called `biwords' (which are in natural bijection with matrices of non-negative integers) and pairs of same-shape semistandard tableaux \cite{Knuth}. This bijection is known as the Robinson-Schensted-Knuth (RSK) correspondence.

A celebrated feature of the RSK correspondence is a certain symmetry property; namely, exchanging the rows of a biword (or, transposing the matrix from the matrix perspective) translates via the RSK correspondence to exchanging the positions of the associated pair of semistandard tableaux. This property, proven for the RS correspondence by Viennot \cite{Viennot} in 1977  and extended to the full RSK correspondence by Fulton \cite{Fulton} in 1997, is far from obvious from the workings of the RSK algorithm itself.

The RSK correspondence has applications in a variety of settings; of particular relevance for this paper is its application in representation theory and invariant theory, where it describes a bijection between various important bases for associative algebras and Lie algebras. We consider here the generalization of RSK correspondence to combinatorial objects associated with the representation theory and invariant theory of superalgebras. These `super' combinatorial objects are {\em restricted superbiwords} and {\em semistandard supertableaux}. In contrast with the classical situation, letters in restricted superbiwords can have even or odd parity, with repetition of mixed-parity biletters disallowed. Semistandard supertableaux are nondecreasing tableaux in which letters of even parity strictly increase down columns, and letters of odd parity strictly increase along rows. See for example \cite{CPT, DR, GRS, LNS, MZ} for a few instances of these combinatorial objects arising in the study of bases of superalgebras and their representations.

We prove that an adaptation of the mixed insertion algorithm defined by Haiman \cite{Haiman} can be used to define a `super-RSK' correspondence between restricted superbiwords and same-shape pairs of semistandard supertableaux. This correspondence fully generalizes the classical RSK correspondence, in the sense that classical RSK can be viewed as a specialization of super-RSK to the case of even-parity superbiwords, and the classical symmetry property described above holds for super-RSK in full generality.

Numerous variants of super-RSK correspondence exist in the literature. Most notably, Bonetti, Senato, and Venezia \cite{BSV} presented a different correspondence between the same sets of combinatorial objects considered in this paper. At the heart of their algorithm are dual insertion processes which are very much like the classical Schensted insertion process, in that insertion progresses linearly from one row to the next (or one column to the next), and the number of `bumps' in a given insertion is bounded by the number of rows (or the number of columns) in the Young diagram. By contrast, the Haiman insertion process utilized in this paper progresses in a less direct fashion, where the number of `bumps' in an insertion is bounded only by the number of nodes in the Young diagram. A more crucial difference between the two algorithms is the fact that  the super-RSK of \cite{BSV} does not have the symmetry property in general (see Example \ref{bigex}). La Scala, Nardozza and Senato describe \cite[Proposition 4.7]{LNS} a subset of superbiwords where symmetry is known to hold for the \cite{BSV} correspondence, but a complete description of such biwords is still an open problem.

Another variant of super-RSK correspondence appears in work by Shimozono and White \cite{SW}. Their algorithm is based around the same Haiman insertion algorithm as used here, but adapted to work with a different class of combinatorial objects: unrestricted superbiwords (repetitions of mixed biletters allowed), and supertableaux which are row-weak and column-strict with respect to {\em both} parities. They demonstrate a bijection between these objects, and prove that their correspondence generalizes the classical symmetry property as well. While the \cite{SW} algorithm generally yields different supertableaux from ours (see again Example \ref{bigex}), we note that they agree, crucially, in the special case of `standard' superbiwords---those with no repeated letters. This is a key ingredient in the proof of the symmetry of our super-RSK correspondence.

Now for a description of the structure of this paper. In \S\ref{prelimsec}, we set up basic notation and definitions of the relevant combinatorial objects. In \S\ref{insexsec}, we describe the \(\varepsilon\)-insertion process which drives the super-RSK algorithm, and prove some useful lemmas about the process. In \S\ref{behavsec}, we prove some bounds on the distribution of bumped nodes during the insertion process, which are necessary for the results in the subsequent section, and perhaps of independent interest in the study of tableau growth. In \S\ref{SRSKsec}, we define the super-RSK map `\(\textup{sRSK}\)' and prove the first main theorem of the paper, which appears as Theorem \ref{SuperRSK} in the text:\\

\noindent {\bf Theorem 1} (Super-RSK correspondence){\bf .} {\em The map \(\textup{sRSK}\) defines a bijection between restricted superbiwords and same-shape pairs of semistandard supertableaux.}\\

\noindent In \S\ref{symsec}, we prove some lemmas related to standardizing superbiwords, and prove the other main theorem of the paper, which appears as Corollary \ref{corsym} in the text:\\

\noindent {\bf Theorem 2} (Super-RSK symmetry){\bf .} {\em Under super-RSK correspondence, exchanging rows in the superbiword \(\bw\) is equivalent to exchanging the positions of the pair of supertableaux \(\textup{sRSK}(\bw)\).
}
\vspace{2mm}

\subsection{Acknowledgements} The author would like to thank Alexander Kleshchev, who originally suggested the topic of the paper as an approach to a representation theoretic problem, and provided helpful suggestions. The author would also like to thank Scott Cook for numerous fruitful discussions.

\section{Preliminaries}\label{prelimsec}

Since all combinatorial objects considered in this paper are \(\Z_2\)-colored, we will henceforth suppress the prefix `super' from most of our terminology.

\subsection{Alphabets}\label{alph}

An {\em alphabet} \(\Xlet\) is a set equipped with a parity function \(\Xlet \to \ZZ_2\),  \(x \mapsto \overline{x}\), and a total order \(<_{\Xlet}\). Elements of alphabets are called {\em letters}. We call \(x \in \Xlet\) {\em even} if \(\overline{x} = \overline{0}\) and {\em odd} if \(\overline{x} = \overline{1}\). Let \(\prec_{\Xlet}\) be the total order on \(\Xlet\) defined by
\begin{align*}
a \prec_{\Xlet} b \iff
\begin{cases}
\overline{a}=\overline{1}, \overline{b}=\overline{0}, \textup{ or}\\
\overline{a}=\overline{b} = \overline{0} \textup{ and } a <_\Xlet b, \textup{ or}\\
\overline{a}=\overline{b} = \overline{1} \textup{ and } a >_\Xlet b.\\
\end{cases}
\end{align*}
Note then that
\begin{align*}
a<_{\Xlet} b \implies
\begin{cases}
a \prec_{\Xlet} b & \textup{if } \bar{b} = \bar{0}\\
a \succ_{\Xlet} b & \textup{if } \bar{b} = \bar 1.
\end{cases}
\end{align*}
The {\em dual alphabet} \(\Xlet^*\) of an alphabet \(\Xlet\) has underlying set \(\{x^* \mid x \in \Xlet\}\), parity function defined by \(\overline{x^*} = \overline{x} + \overline{1}\),  and total order \(<_{\Xlet^*}\) defined so that 
\begin{align*}
x^* <_{\Xlet^*} y^* \iff x <_{\Xlet} y.
\end{align*}
It follows that \(a^* \prec_{\Xlet^*} b^*\) if and only if \(a \succ_{\Xlet} b\).

The {\em standardizing alphabet} \(\Xlet^\bullet\) of an alphabet \(\Xlet\) has underlying set \(\{x^{(i)} \mid x \in \Xlet, i \in \Z_{>0}\}\), parity function defined by \(\overline{x^{(i)}} = \overline{x}\), and total order \(<_{\Xlet^\bullet}\) defined so that
\begin{align*}
a^{(i)} <_{\Xlet^\bullet} b^{(j)} \iff
\begin{cases}
a<_{\Xlet} b, \textup{ or}\\
a=b \textup{ and } i<j.
\end{cases}
\end{align*}
Define the `forget superscripts' function \(\hat{\bullet}: \Xlet^{\bullet} \to \Xlet\) by \(x^{(i)} \mapsto x\).

Going forward, we will suppress the subscripts and write \(<\) or \(\prec\) when the underlying alphabet is clear from context.

\subsection{ Tableaux} 

We set
$\Nodes:= \Z_{>0}\times \Z_{>0}$ 
and refer to the elements of $\Nodes$
as {\em nodes}. 
%For $i\in I$, let $\Nodes^i=\{i\}\times \Z_{>0}\times \Z_{>0} \subset \Nodes$. 
Define a partial order $\leq$ on $\Nodes$ as follows: $(r,s)\leq(r',s')$ if and only if 
$r\le r'$ and $s\le s'$.  For \(u = (r,s) \in \Nodes\) we will write \(u':=(s,r) \in \Nodes\). 

For \(\varepsilon \in \Z_2\) and \(u=(r,s) \in \Nodes\), define
\begin{align*}
u_\varepsilon =
\begin{cases}
r & \varepsilon = \overline{0},\\
s& \varepsilon = \overline{1}.
\end{cases}
\end{align*}
and for \(i \in \ZZ_{>0}\) define
\begin{align*}
\Nodes(\varepsilon,i) = \{ u \in \Nodes \mid u_\varepsilon = i\}.
\end{align*}
I.e., \(\Nodes(\varepsilon,i)\) is the \(i\)th row of nodes if \(\varepsilon = \overline{0}\), and the \(i\)th column of nodes if \(\varepsilon=\overline{1}\). 

We write \(u \uparrow v\) if \(u_{\overline{0}} \geq v_{\overline{0}}\) and \(u_{\overline{1}} = v_{\overline{1}}\), and \(u\; \UParrow\; v\) if the inequality is strict. We write \(u \nearrow v\) if \(u_{\overline{0}} \geq v_{\overline{0}}\) and \(u_{\overline{1}} \leq v_{\overline{1}}\), and \(u\; \NEarrow\; v\) if both inequalities are strict. We similarly define the symbols \(\rightarrow, \Rightarrow, \searrow, \SEarrow, \downarrow, \DOWNarrow\;\).

For \(n \in \ZZ_{\geq 0}\), we say \(\lambda = (\lambda_1, \ldots, \lambda_n) \in \ZZ_{\geq 0}^n\) is a {\em partition of} \(n\), writing \(\lambda \vdash n\), if \(\lambda_1 \geq \cdots \geq \lambda_n\) and \(\sum \lambda_i = n\). Let \(\Lambda_+(n)\) be the set of all partitions of \(n\). The {\em Young diagram} of \(\lambda\) is 
\begin{align*}
[\lambda] = \{(r,s) \in \Nodes \mid s \leq \lambda_r\}.
\end{align*}
We say a node \(u\) of \([\lambda]\) is {\em removable} if \([\lambda] \backslash \{u\} = [\mu]\) for some partition \(\mu\). We say a node \(u \notin [\lambda]\) is {\em addable} if \([\lambda] \cup \{u\} = [\mu]\) for some partition \(\mu\). For a partition \(\lambda\), the {\em conjugate partition} \(\lambda'\) is defined such that \(u \in [\lambda']\) if and only if \(u' \in [\lambda]\).

An  (\(\Xlet,\la\)){\em-tableau} is a function \(\T:[\la]\to \Xlet\). If \(\T\) is an \((\Xlet, \la)\)-tableau, we write \(\textup{sh}(\T) = \la\). The {\em content} \(\textup{con}(\T)\) of an \((\Xlet,\la)\)-tableau \(\T\) is the multiset \(\{ \T(u) \mid u \in [\la]\}\).

An \((\Xlet,\la\))-tableau is {\em semistandard} if:
\begin{enumerate}
\item it is {\em non-decreasing}: \(\T(u) \leq_{\Xlet} \T(v)\) whenever \(u \leq v\).
\item it is {\em row-strict} with respect to odd letters: if \(\T(u) = \T(v)\) for \(u,v\) in the same row, then \(\overline{\T(u)}=\overline{0}\).
\item it is {\em column-strict} with respect to even letters: if \(\T(u) = \T(v)\) for \(u,v\) in the same column, then \(\overline{\T(u)}=\overline{1}\).
\end{enumerate}

A {\em standard} tableau is a semistandard tableau such that \(\T(u) \neq \T(v)\) for every \(u \neq v \in [\lambda]\). For an \((\Xlet, \lambda)\)-tableau \(\T\), define the {\em dual \((\Xlet^*, \lambda)\)-tableau} \(\T^*\) by \(\T^*(u):= \T(u)^*\), and define the {\em conjugate \((\Xlet, \lambda')\)-tableau} \(\T'\) by \(\T'(u):= \T(u')\). We write \(\T'^*:= (\T')^* = (\T^*)'\) for the {\em dual conjugate \((\Xlet^*, \la')\)-tableau}. The following lemmas are obvious.

\begin{Lemma}
The following are equivalent:
\begin{enumerate}
\item \(\T\) is a standard \((\Xlet,\lambda)\)-tableau.
\item \(\T'\) is a standard \((\Xlet,\lambda')\)-tableau.
\item \(\T^*\) is a standard \((\Xlet^*,\lambda)\)-tableau.
\end{enumerate}
\end{Lemma}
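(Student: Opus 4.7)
The plan is to first observe that a standard tableau admits a much simpler characterization than the full semistandard definition suggests: namely, $\T$ is standard if and only if $\T$ is non-decreasing and all entries of $\T$ are distinct. Indeed, conditions (ii) and (iii) in the definition of semistandard (row-strict on odd letters, column-strict on even letters) are hypotheses of the form ``if $\T(u)=\T(v)$ then \ldots,'' and so are vacuously satisfied when no two entries of $\T$ agree. With this reduction, the lemma will follow from the fact that both the conjugation operation $\T \mapsto \T'$ and the dualization operation $\T \mapsto \T^*$ preserve distinctness of entries and the non-decreasing condition.

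For the equivalence (i) $\iff$ (ii), I would note that $u \mapsto u'$ is an involution on $\Nodes$ satisfying $u \leq v$ if and only if $u' \leq v'$, so $\T'(u) \leq \T'(v)$ for all $u \leq v$ in $[\lambda']$ if and only if $\T(u') \leq \T(v')$ for all $u' \leq v'$ in $[\lambda]$, which is just the non-decreasing condition for $\T$. Moreover, $\T$ and $\T'$ have the same multiset content, so one has distinct entries if and only if the other does.

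For the equivalence (i) $\iff$ (iii), the definition of $<_{\Xlet^*}$ makes the map $x \mapsto x^*$ an order isomorphism from $(\Xlet, <_\Xlet)$ onto $(\Xlet^*, <_{\Xlet^*})$. Hence $\T^*(u) \leq_{\Xlet^*} \T^*(v)$ if and only if $\T(u) \leq_\Xlet \T(v)$, and the bijection $x \mapsto x^*$ clearly preserves distinctness of entries. So again both characterizing properties transfer directly.

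There is really no main obstacle here, which accords with the author's description of the lemma as ``obvious''; the only point that needs to be made explicit is the collapse of the three semistandardness conditions to ``non-decreasing with distinct entries'' in the standard case, after which everything follows from the evident order-preserving properties of $u \mapsto u'$ and $x \mapsto x^*$.
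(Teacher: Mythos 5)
Your proof is correct and supplies exactly the natural argument that the paper elides by declaring the lemma (together with Lemma \ref{dualconj}) to be ``obvious'' and omitting a proof. The reduction of standardness to ``non-decreasing with all entries distinct,'' followed by the observations that $u \mapsto u'$ is an order-isomorphism of $\Nodes$ and $x \mapsto x^*$ is an order-isomorphism from $(\Xlet, <_{\Xlet})$ to $(\Xlet^*, <_{\Xlet^*})$, is precisely the intended reasoning.
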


\begin{Lemma}\label{dualconj}
An \((\Xlet,\lambda)\)-tableau \(\T\) is semistandard if and only if the dual conjugate \(\T'^*\) is a semistandard \((\Xlet^*,\lambda')\)-tableau.
\end{Lemma}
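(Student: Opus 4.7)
The plan is to verify the three semistandardness conditions for $\T'^*$ by translating each one back into a semistandardness condition on $\T$ via the definitions of the $'$ and $*$ operations.

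First I would record the necessary dictionary. By definition, for $u \in [\la']$ we have $\T'^*(u) = \T(u')^*$, and the map $u \mapsto u'$ is a bijection $[\la'] \to [\la]$ with two properties: it preserves the partial order on nodes (since $(r,s)\leq(r',s')$ iff $(s,r)\leq(s',r')$), and it interchanges rows with columns ($u,v$ lie in the same row of $[\la']$ iff $u',v'$ lie in the same column of $[\la]$, and symmetrically). On the letter side, the map $a \mapsto a^*$ is an order isomorphism $\Xlet \to \Xlet^*$ (by the definition of $<_{\Xlet^*}$), a bijection, and reverses parity ($\overline{a^*} = \overline{a} + \overline{1}$).

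Next I would check each of the three conditions. Condition (i) on $\T'^*$, i.e., $\T'^*(u) \leq \T'^*(v)$ whenever $u \leq v$ in $[\la']$, translates directly to $\T(u') \leq \T(v')$ whenever $u' \leq v'$ in $[\la]$, which is condition (i) on $\T$. Condition (ii) on $\T'^*$ (row-strict for odd letters) states that if $u,v$ lie in the same row of $[\la']$ and $\T'^*(u) = \T'^*(v)$, then $\overline{\T'^*(u)} = \overline{0}$; by the dictionary this reads: if $u',v'$ lie in the same column of $[\la]$ and $\T(u')=\T(v')$, then $\overline{\T(u')} = \overline{1}$, which is precisely condition (iii) on $\T$. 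By the symmetric argument with rows and columns swapped, condition (iii) on $\T'^*$ corresponds to condition (ii) on $\T$.

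Since $'$ and $*$ are both involutions, the above equivalences yield the lemma in both directions. No step presents a real obstacle; the only thing to be careful about is that the swap of rows and columns induced by conjugation precisely compensates for the swap of the row-strict and column-strict roles forced by the parity flip in dualization.
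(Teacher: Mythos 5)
Your proof is correct. The paper labels this lemma ``obvious'' and gives no proof; your argument is exactly the direct unfolding of the definitions that one would consider the standard verification, and all three translation steps (including the key parity flip turning row-strictness for odd letters into column-strictness for even letters) check out.
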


\subsection{Standardizing tableaux}\label{stdtabsec}
Recalling the standardizing alphabet \(\Xlet^\bullet\) from \S\ref{alph}, for any \((\Xlet^\bullet, \lambda)\)-tableau \(\T\), define \(\hat \bullet(\T) := \hat \bullet \circ \T\). I.e., \(\hat \bullet(\T)\) is the tableau \(\T\) with superscripts deleted.
We say a standard \((\Xlet^\bullet, \la)\)-tableau \(\U\) is \(\bullet\)-{\em standard} provided 
\begin{enumerate}
\item \(\T:=\hat\bullet ( \U)\) is a semistandard \((\Xlet,\lambda)\)-tableau.
\item If \(u \nearrow v \in [\lambda]\) and \(\T(u) = \T(v)\), then \(\U(u)\prec_{\Xlet^\bullet} \U(v)\).
\end{enumerate}
We say then that \(\U\) is a \(\bullet\)-{\em standardization} of \(\T\).

\begin{Example}\label{firstex}
Take \(\Xlet = \{ \hat 1 < 1 < \hat 2 <  2 <  \hat 3 <  3 \}\), with odd elements indicated by carets. Let \(\lambda = (4,4,2)\). 
An \((\Xlet, \lambda)\)-tableau \(\T\),  its dual \(\T^*\), conjugate \(\T'\), and dual conjugate \(\T'^*\) are shown below.
\begin{align*}
\T=\ytableausetup{centertableaux}
\begin{ytableau}
 \hat{1} & 1 & 1 & \hat{2} \\
 \hat{1} & \hat{2} & 3 & 3\\
 \hat{1} & \hat{2}
\end{ytableau}
\hspace{6mm}
\T^*=\ytableausetup{centertableaux}
\begin{ytableau}
 \hat{1}^* & 1^* & 1^* & \hat{2}^* \\
 \hat{1}^* & \hat{2}^* & 3^* & 3^*\\
 \hat{1}^* & \hat{2}^*
\end{ytableau}
\hspace{6mm}
\T'=\ytableausetup{centertableaux}
\begin{ytableau}
\hat{1} & \hat{1} & \hat{1}\\
1 & \hat{2} & \hat{2} \\
1 & 3\\
\hat{2} & 3
\end{ytableau}
\hspace{6mm}
\T'^*=\ytableausetup{centertableaux}
\begin{ytableau}
\hat{1}^* & \hat{1}^* & \hat{1}^*\\
1^* & \hat{2}^* & \hat{2}^* \\
1 ^*& 3^*\\
\hat{2}^* & 3^*
\end{ytableau}
\end{align*}
Then \(\T\) is a semistandard \((\Xlet, \lambda)\)-tableau, and \(\T'^*\) is a semistandard \((\Xlet^*, \lambda')\)-tableau. In the standardizing alphabet \(\Xlet^\bullet\) we have
\begin{align*}
\hat 1^{(1)} < \hat 1^{(2)} < \hat 1^{(3)} < 1^{(1)} < 1^{(2)} < \hat 2^{(1)} < \hat 2^{(2)} < \hat 2^{(3)} < 3^{(1)} < 3^{(2)},
\end{align*}
and
\begin{align*}
 \hat 2^{(3)} \prec \hat 2^{(2)} \prec \hat 2^{(1)} \prec \hat 1^{(3)} \prec \hat 1^{(2)} \prec \hat 1^{(1)} \prec 1^{(1)} \prec 1^{(2)}   \prec 3^{(1)} \prec 3^{(2)},
\end{align*}
and so the \((\Xlet^\bullet, \la)\)-tableau
\begin{align*}
{\tt U}=
\ytableausetup{centertableaux,boxsize=1.7em}
\begin{ytableau}
\hat{1}^{(1)} & 1^{(1)} & 1^{(2)} & \hat{2}^{(1)} \\
 \hat{1}^{(2)} & \hat{2}^{(2)} & 3^{(1)} & 3^{(2)}\\
 \hat{1}^{(3)} & \hat{2}^{(3)}
\end{ytableau}
\end{align*}
is a \(\bullet\)-standardization of \(\T\).
\end{Example}

\section{ Insertion and Extraction}\label{insexsec}
It will be convenient in practice to formally extend the domain and range of an \((\Xlet, \lambda)\)-tableau \(\T\) to a function \(\T: \Nodes \to \Xlet \cup \{\infty\}\) by setting \(\T(u) = \infty\) for all \(u \notin [\lambda]\). We extend the order \(<\) on \(\Xlet\) to \(\Xlet \cup \{\infty\}\) by setting \(x < \infty\) for all \(x \in \Xlet\). We define the symbols \(\substack{\overline{0}\\ <}:=<\) and \(\substack{\overline{1} \\ <}:= \leq\). 

\subsection{Insertion}
Let \(\lambda \vdash n\), and assume \(\T\) is a semistandard \((\Xlet,\lambda)\)-tableau. Let \(\varepsilon \in \Z_2\) and \(x \in \Xlet\). From this data we construct an  \((\Xlet,\mu)\)-tableau \((\T \xleftarrow{\varepsilon} x)\), where \(\mu \vdash n+1\), via the method of \(\varepsilon\)-{\em insertion}.  \\

{\bf Algorithm for \(\varepsilon\)-insertion.}
\begin{enumerate}
\item[(1)] Set \(i:=1\), \(j:=1\), and \(x_1:=x\).

\item[(2)] Set \(b_j\) to be the smallest node in \(\Nodes(\varepsilon + \overline{x}_j,i)\) such that \(x_j \;\substack{\varepsilon \\ <}\; \T(b_j)\). 

\item[(3)] If \(\T(b_j) = \infty\), go to step (5). Otherwise, set \(j:=j+1\). 

\item[(4)] Set \(x_{j}:= \T(b_{j-1})\). Set \(i:= (b_{j-1})_{\varepsilon + \overline{x}_j} +1\) and go to step (2).

\item[(5)] Define \(\mu\) such that \([\mu] := [\la] \cup \{b_j\}\). Define \((\T \xleftarrow{\varepsilon} x)\) to be the  \((\Xlet,\mu)\)-tableau such that \((\T \xleftarrow{\varepsilon} x)(b_k) = x_k\) for all \(1\leq k \leq j\), and \((\T \xleftarrow{\varepsilon}x)(u) = \T(u)\) for all other nodes of \([\mu]\).
\\
\end{enumerate}

Assuming the process terminates when \(j=m\), we call \(b_1, \ldots, b_m\) the {\em bumped node sequence}, and call \(A(\T, \varepsilon, x):=b_m\)  the {\em added node}. We call \(x_1, \ldots, x_m\) the {\em bumped letter sequence}.

\begin{Remark}
Informally speaking, under \(\bar 0\)-insertion, bumped even letters are inserted in the next row down and bumped odd letters are inserted in the next column to the right. In \(\bar 1\)-insertion, this is reversed. The fuss over the differing comparisons \(\substack{\overline{0} \\ <}\) and \(\substack{\overline{1} \\ <}\) is needed to assure that semistandardness is maintained under \(\varepsilon\)-insertion, as will be shown in Lemma \ref{semisemi}.
\end{Remark}

\begin{Remark}\label{insertinLitH}
If \(\Xlet = \NN\), where \(<\) is the usual order on integers and every element is of even parity, then \(\overline{0}\)-insertion is {\em Schensted insertion} \cite{Schensted}. For general \(\Xlet\) and standard tableaux, \(\overline 0\)-insertion is {\em mixed insertion} as defined by Haiman (where odd letters are referred to as {\em circled}) \cite{Haiman}.
\end{Remark}

\begin{Remark}\label{insertinLitSW}
In \cite[\S3]{SW}, Shimozono and White define a process called {\em doubly-mixed insertion}, also adapted from from \cite{Haiman}, which is very similar to the \(\varepsilon\)-insertion presented in this paper; the processes are identical when applied to the subclass of standard tableaux. We note however that Shimozono and White use a different definition for semistandard tableaux---in their setup, semistandard tableaux are row-weak and column-strict with respect to both parities---so doubly-mixed insertion and \(\varepsilon\)-insertion differ substantially in the presence of repeated letters.
\end{Remark}

\begin{Example}
With \(\Xlet\) as in Example \ref{firstex} and \(\T\) as shown below, we have
\begin{align*}
\T=\ytableausetup{centertableaux}
\begin{ytableau}
 1 & \hat 2 & 2 \\
\hat 3 & 3 \\
\hat 3 
\end{ytableau}
\hspace{10mm}
(\T\xleftarrow{ \overline{0}} 1)=\ytableausetup{centertableaux}
\begin{ytableau}
 1 & 1 & \hat 2 \\
2 & \hat 3 \\
\hat 3 & 3
\end{ytableau}
\hspace{10mm}
(\T\xleftarrow{ \overline{1}} 1)=\ytableausetup{centertableaux}
\begin{ytableau}
 1 & 1 & 2 \\
\hat 2 &3 \\
\hat 3 \\
\hat 3
\end{ytableau}
\end{align*}

The bumped node sequence for \((\T\xleftarrow{ \overline{0}} 1)\) is
\begin{align*}
(1,2), (1,3), (2,1), (2,2), (3,2),
\end{align*}
and the bumped letter sequence is \(1,\hat 2, 2, \hat 3, 3\). The bumped node sequence for \((\T\xleftarrow{ \overline{1}} 1)\) is
\begin{align*}
(1,1), (1,2), (2,1), (3,1), (4,1),
\end{align*}
and the bumped letter sequence is \(1,1, \hat 2, \hat 3, \hat 3\).
\end{Example}

\subsection{Extraction}
Let \(\U\) be a semistandard \((\Xlet,\mu)\)-tableau, let \(u\) be a removable node of \([\mu]\), and set \(\la\) to be such that \([\lambda] = [\mu] \backslash \{u\}\). We define a \((\Xlet,\la)\)-tableau \((\U \xrightarrow{\varepsilon} u)\) by the method of \(\varepsilon\)-{\em extraction}.\\

{\bf Algorithm for \(\varepsilon\)-extraction.}
\begin{enumerate}
\item[(1)] Set \(j:=1\), \(c_1:=u\), and \(y_1:=\U(u)\). 
%If \(\varepsilon y_1 >0\), then go to step (2). Otherwise, go to step (3).
\item [(2)] Set \(i:= (c_j)_{\varepsilon + \overline{y}_j} -1\).
%Let \(i:=\ttr(c_j)-1\).
If \(i=0\), go to step (5).
\item [(3)] Set \(c_{j+1}\) to be the greatest node in \(\Nodes(\varepsilon+\overline{y}_j,i)\) such that \(y_j \;\substack{\varepsilon \\ >} \;\U(c_{j+1})\). %Otherwise, let \(c_{j+1}\) be the rightmost node in the \(i\)th row such that \(y_j \;\substack{\varepsilon \\ >} \;\U(c_{j+1})\). Go to step (4).
%\item [(3)] Let \(i:=\ttc(c_j)-1\). If \(i=0\), go to step (5). Otherwise, let \(c_{j+1}\) be the bottommost node in the \(i\)th column such that \(y_j \;\substack{\varepsilon \\ >} \;\U(c_{j+1})\). Go to step (4).
\item [(4)] Set \(j:=j+1\). Set \(y_{j}:=\U(c_{j})\).
% If \(\varepsilon y_{j} >0\), go to step (2). Otherwise, go to step (3).
Go to step (2).
\item [(5)] Define \((\U \xrightarrow{\varepsilon} u)\) to be the \(\lambda\)-tableau such that \((\U \xrightarrow{\varepsilon} u)(c_k) = y_{k-1}\) for all \(2 \leq k \leq j\), and \((\U \xrightarrow{\varepsilon} u)(v) = \U(v)\) for all other nodes of \([ \lambda]\).\\
\end{enumerate}

Assuming the process terminates when \(j=m\), we call \(y_1, \ldots, y_m\) the {\em unbumped letter sequence}., and define \(R(\U, \varepsilon, u):=y_m\) to be the {\em extracted letter}. We call \(c_1, \ldots, c_m\) the {\em unbumped node sequence}.

\begin{Example}
With \(\Xlet\) as in Example \ref{firstex} and \(\T\) as shown below, we have
\begin{align*}
\T=\ytableausetup{centertableaux}
\begin{ytableau}
 1 & \hat 2 & 2 \\
\hat 3 & 3 \\
\hat 3 
\end{ytableau}
\hspace{10mm}
(\T\xrightarrow{ \overline{0}} (3,1))=\ytableausetup{centertableaux}
\begin{ytableau}
 1 & \hat 2 & 2 \\
\hat 3 & 3 
\end{ytableau}
\hspace{10mm}
(\T\xrightarrow{ \overline{1}} (3,1))=\ytableausetup{centertableaux}
\begin{ytableau}
1 & 2 & \hat 3\\
\hat 3 & 3
\end{ytableau}
\end{align*}

The unbumped node sequence for \((\T\xrightarrow{ \overline{0}} (3,1))\) is only the node \((3,1)\), and the unbumped letter sequence is \(\hat 3\).

The unbumped node sequence for \((\T\xrightarrow{ \overline{1}} (3,1))\) is
\begin{align*}
(3,1), (2,1), (1,3), (1,2)
\end{align*}
and the unbumped letter sequence is \(\hat 3,\hat 3,  2, \hat 2\).
\end{Example}

\subsection{Some results on insertion and extraction} As noted in Remark \ref{insertinLitH}, \(\varepsilon\)-insertion is an adaptation of `mixed insertion' defined by Haiman \cite{Haiman}. Although Haiman works with {\em standard} tableaux, he remarks that his results may be extended to the semistandard case in a straightforward manner---this is outlined in \cite[\S1]{Haiman} and we take some pains to make the idea explicit in Lemma \ref{standsame}. Though some of the results in this subsection would follow from those in \cite{Haiman} and Lemma \ref{standsame}, we nevertheless include full proofs working in the general semistandard case, for the sake of clarity and self-containment, and since our notation and approach differ substantially from that of \cite{Haiman}.

The following two lemmas follow directly from definitions of the algorithms.
\begin{Lemma}\label{dualconjinsertion}
Let \(\T\) be a standard \((\Xlet, \la)\)-tableau, and assume \(x \in \Xlet \backslash \T\). Then
\begin{enumerate}
\item \((\T \xleftarrow{ \varepsilon} x)' = (\T' \xleftarrow{ \varepsilon +\overline{1}} x)\), and if \(b_1, \ldots, b_m\) is the bumped node sequence for \((\T \xleftarrow{ \varepsilon} x) \), then \(b_1', \ldots, b_m'\) is the bumped node sequence for \((\T' \xleftarrow{ \varepsilon +\overline{1}} x)\).
%\item \((\T \xleftarrow{ \varepsilon, <} x)' = (\T' \xleftarrow{ \overline{\varepsilon}, <} x)\), and if \(b_1, \ldots, b_m\) is the bumped node sequence for \((\T \xleftarrow{<,\varepsilon} x)\), then \(b_1', \ldots, b_m'\) is the bumped node sequence for \((\T' \xleftarrow{ \overline{\varepsilon}, <} x)\).
\item \((\T \xleftarrow{ \varepsilon} x)^* = (\T^* \xleftarrow{\varepsilon + \overline{1}} x^*)\),  and both insertions have the same bumped node sequence.
%\item \(\overline{(\T \xleftarrow{ \varepsilon, <} x)} = (\overline{\T} \xleftarrow{\overline{\varepsilon}, \overline{<}} \overline{x})\), and \((\T \xleftarrow{ \varepsilon, <} x)\) and \((\overline{\T} \xleftarrow{\overline{\varepsilon}, \overline{<}} \overline{x})\) have the same bumped node sequence.
\end{enumerate}
\end{Lemma}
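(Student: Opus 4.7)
The plan is to verify both identities by induction on the step $j$ of the $\varepsilon$-insertion algorithm, showing that at each step the two algorithms on either side of the equation select matching bumped nodes. The key geometric observation behind (i) is that for any $u\in\Nodes$ and $\delta\in\ZZ_2$, the conjugate node $u'$ satisfies $(u')_\delta = u_{\delta+\overline{1}}$, so conjugation exchanges row-sets with column-sets: $u\in\Nodes(\delta,i)$ if and only if $u'\in\Nodes(\delta+\overline{1},i)$. This is exactly the shift compensated for by toggling $\varepsilon\leftrightarrow\varepsilon+\overline{1}$.

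For (i), I would set up the induction so that the bumped node sequence $b_1,\dots,b_m$ produced on the primal side and its candidate image $b_1',\dots,b_m'$ on the conjugate side stay aligned step by step, with a common bumped letter sequence $x_1,\dots,x_m$. At step (2) of the algorithm, primal $\varepsilon$-insertion searches $\Nodes(\varepsilon+\overline{x}_j,i)$ for the smallest node with $x_j\substack{\varepsilon\\<}\T(b_j)$, while the conjugate $(\varepsilon+\overline{1})$-insertion searches $\Nodes((\varepsilon+\overline{1})+\overline{x}_j,i)$ for the smallest node with $x_j\substack{\varepsilon+\overline{1}\\<}\T'(b_j')=\T(b_j)$. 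By the observation above, these two searches correspond to each other under $u\mapsto u'$, so the minimality property transfers correctly. At step (4), the updated index $(b_{j-1})_{\varepsilon+\overline{x}_j}+1$ on the primal side equals $(b_{j-1}')_{(\varepsilon+\overline{1})+\overline{x}_j}+1$ on the conjugate side, so the indices stay synchronized, and the induction propagates.

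For (ii), the argument runs in parallel, with no conjugation involved. At step $j$ of $(\varepsilon+\overline{1})$-insertion of $x^*$ into $\T^*$, the search set $\Nodes((\varepsilon+\overline{1})+\overline{x_j^*},i)$ equals $\Nodes(\varepsilon+\overline{x}_j,i)$ by virtue of $\overline{x^*}=\overline{x}+\overline{1}$, matching the primal search set. The bumping condition $x_j^*\substack{\varepsilon+\overline{1}\\<}\T(b_j)^*$ is equivalent to $x_j\substack{\varepsilon\\<}\T(b_j)$ by the definition of $<_{\Xlet^*}$. Thus the same node $b_j$ is selected throughout, and both insertions share an identical bumped node sequence.

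The one mildly delicate point, and the main obstacle to the induction going through cleanly, is that toggling $\varepsilon$ swaps the strict inequality $\substack{\overline{0}\\<}=<$ with the weak inequality $\substack{\overline{1}\\<}=\leq$; in general this could alter which node is bumped. The standing hypotheses of the lemma defuse this: since $\T$ is standard and $x\notin\T$, all letters of $\T$ together with $x$ are pairwise distinct, so every letter $x_j$ appearing in the bumped letter sequence differs from the tableau value $\T(b_j)$ at each step, making the strict-versus-non-strict distinction immaterial. This is precisely where the standardness assumption enters, and I would flag it explicitly at the relevant inductive step.
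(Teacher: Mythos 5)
Your proof is correct. The paper offers no argument for this lemma, stating only that it ``follows directly from the definitions of the algorithms,'' and your step-by-step induction carefully spells out exactly what that means: the identity $(u')_\delta = u_{\delta+\overline{1}}$ realigns $\Nodes(\varepsilon+\overline{x}_j,i)$ with $\Nodes((\varepsilon+\overline{1})+\overline{x}_j,i)$ in part (i), the parity shift $\overline{x^*}=\overline{x}+\overline{1}$ keeps the search sets identical in part (ii), and you correctly identify that standardness of $\T$ together with $x\notin\T$ is precisely what makes the $<$ versus $\leq$ discrepancy in $\substack{\varepsilon\\<}$ and $\substack{\varepsilon+\overline{1}\\<}$ harmless (since $x_j\neq\T(b_j)$ at every step). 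This is the intended argument, fully fleshed out.
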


\begin{Lemma}\label{dualconjsemi}
Let  \(\T\) be a semistandard \((\Xlet,\lambda)\)-tableau. Then
\begin{enumerate}
\item \((\T \xleftarrow{\varepsilon} x)'^* = (\T'^*   \xleftarrow{\varepsilon} x^*)\) for every \(x \in \Xlet\), and if \(b_1, \ldots, b_m\) is the bumped node sequence for \((\T \xleftarrow{\varepsilon} x)\), then \(b_1', \ldots, b_m'\) is the bumped node sequence for \((\T'^*   \xleftarrow{\varepsilon} x^*)\).
%\item \(\overline{(\T \xleftarrow{<,\varepsilon} x)}' = (\overline{\T}'   \xleftarrow{\overline{<},\varepsilon} \overline{x})\) for every \(x \in \colL\), and if \(b_1, \ldots, b_m\) is the bumped node sequence for \((\T \xleftarrow{<,\varepsilon} x)\), then \(b_1', \ldots, b_m'\) is the bumped node sequence for \((\overline{\T}   \xleftarrow{\overline{<},\varepsilon} \overline{x})\).
\item \((\T \xrightarrow{\varepsilon} u)'^* = (\T'^*   \xrightarrow{\varepsilon} u')\) for every removable node \(u \in [ \la]\), and if \(c_1, \ldots, c_m\) is the unbumped node sequence for \((\T \xrightarrow{\varepsilon} u)\), then \(c_1', \ldots, c_m'\) is the unbumped node sequence for \( (\T'^*   \xrightarrow{\varepsilon} u')\).
%\item \(\overline{(\T \xrightarrow{<,\varepsilon} u)}' = (\overline{\T}'   \xrightarrow{\overline{<},\varepsilon} u')\) for every removable node \(u \in \Y \la\), and if \(c_1, \ldots, c_m\) is the unbumped node sequence for \((\T \xleftarrow{<,\varepsilon} x)\), then \(c_1', \ldots, c_m'\) is the unbumped node sequence for \((\overline{\T}   \xleftarrow{\overline{<},\varepsilon} \overline{x})\).
\end{enumerate}
\end{Lemma}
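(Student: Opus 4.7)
The plan is to verify both parts by a direct induction that traces the insertion and extraction algorithms step by step. Three arithmetic observations drive the verification: first, the $*$-operation flips parity, so $\varepsilon + \overline{x_j^*} = \varepsilon + \overline{x_j} + \overline{1}$; second, node-transposition flips coordinate indices, so $(u')_\sigma = u_{\sigma + \overline{1}}$ for any $\sigma \in \Z_2$; and third, the order on $\Xlet^*$ is defined so that $x <_\Xlet y$ iff $x^* <_{\Xlet^*} y^*$, so the $\varepsilon$-comparison appearing in step (2) of $\varepsilon$-insertion is preserved under $*$.

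Together, these imply that the row-versus-column search in step (2) of $\varepsilon$-insertion into $\T$ takes place in $\Nodes(\varepsilon + \overline{x_j}, i)$, whose image under $u \mapsto u'$ is exactly $\Nodes(\varepsilon + \overline{x_j^*}, i)$, the set searched by the corresponding step of $\varepsilon$-insertion of $x^*$ into $\T'^*$. Since $\T'^*(u') = \T(u)^*$ and the $\varepsilon$-comparison is preserved, the ``smallest'' node satisfying the bump condition for each algorithm corresponds under $u \mapsto u'$. This supplies the inductive step for part (i): assuming that the $(j{-}1)$-st step of insertion into $\T'^*$ produces data $(b_{j-1}', x_{j-1}^*)$, the same argument places $b_j'$ and $x_j^* = \T(b_j)^* = \T'^*(b_j')$ as the $j$-th bumped node and letter. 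The update $i = (b_{j-1})_{\varepsilon + \overline{x_j}}+1$ carried forward in step (4) matches in the dual via the identity $(u')_\sigma = u_{\sigma + \overline{1}}$, and the termination condition $\T(b_j) = \infty$ triggers simultaneously with $\T'^*(b_j') = \infty$ since $u \in [\lambda]$ iff $u' \in [\lambda']$.

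For part (ii), an essentially identical induction applies, with the roles of ``smallest bumpable node'' and ``greatest node below the extraction threshold'' interchanged; alternatively, once $\varepsilon$-extraction is known to invert $\varepsilon$-insertion at a removable node (a standard fact about such algorithms), (ii) follows formally from (i). The main---admittedly mild---obstacle throughout is the parity bookkeeping in $\Z_2$: one must consistently track that the dual flip of parity and the transpose of coordinates combine to cancel, leaving the algorithm unchanged up to the simultaneous substitutions $u \mapsto u'$ and $x \mapsto x^*$.
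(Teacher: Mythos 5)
Your proposal is correct, and it is essentially the approach the paper intends: the paper simply asserts that the lemma ``follows directly from the definitions of the algorithms,'' and your argument is exactly the direct verification of that claim, tracking that the simultaneous substitutions $u \mapsto u'$, $x \mapsto x^*$ leave each step of $\varepsilon$-insertion (and $\varepsilon$-extraction) invariant because the parity flip from $*$ and the coordinate flip from $'$ cancel in $\Nodes(\varepsilon+\overline{x_j},i)$, while the comparison $\substack{\varepsilon \\ <}$ is preserved under $*$. One minor caveat: the ``alternative'' derivation of (ii) from (i) via the fact that extraction inverts insertion should be avoided here, since in the paper that inverse property is established only after (and using) the present lemma; but since you also give the direct induction for (ii), the proof stands.
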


\begin{Lemma}\label{inverses}
Let \(b_1, \ldots, b_m\) be the bumped node sequence, and let \(x_1, \ldots, x_m\) be the bumped letter sequence for the \(\varepsilon\)-insertion \((\T \xleftarrow{\varepsilon} x)\).  Assume \(i < j\). Then:
\begin{enumerate}
\item \(x_i \; \substack{ \varepsilon \\ <} \; x_j\)
\item \(\T(b_i) \; \substack{ \varepsilon \\ <} \; \T(b_j)\)
\item \((\T \xleftarrow{\varepsilon} x)(b_i)  \; \substack{ \varepsilon \\ <} \; (\T \xleftarrow{\varepsilon} x)(b_j)\)
%\item \(b_j \not \searrow b_i\) if \(\T\) is semistandard.
\item \(b_i \not \geq b_j\).
\end{enumerate}
\end{Lemma}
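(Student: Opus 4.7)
My plan is to establish (1) first and derive (2) and (3) as immediate consequences, then attack (4) by contradiction using (2) together with the semistandardness of \(\T\).

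For (1), step (4) of the algorithm sets \(x_{j+1} := \T(b_{j})\) while step (2) chooses \(b_j\) so that \(x_j \;\substack{\varepsilon \\ <}\; \T(b_j)\); combining these gives \(x_j \;\substack{\varepsilon \\ <}\; x_{j+1}\) for each \(j < m\). Transitivity of \(\;\substack{\varepsilon \\ <}\;\) (either \(<\) or \(\leq\)) then yields (1) by iteration. Part (3) is a restatement of (1) via the identity \((\T \xleftarrow{\varepsilon} x)(b_k) = x_k\) from step (5), and (2) follows from (1) after noting \(\T(b_k) = x_{k+1}\) for \(k < m\) and \(\T(b_m) = \infty\).

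For (4), suppose toward contradiction that \(b_i \geq b_j\) for some \(i < j\). The non-decreasing property of \(\T\) along the partial order on \(\Nodes\) gives \(\T(b_i) \geq \T(b_j)\), whereas (2) gives \(\T(b_i) \;\substack{\varepsilon \\ <}\; \T(b_j)\). When \(\varepsilon = \overline 0\) the strictness of \(<\) produces an immediate contradiction. When \(\varepsilon = \overline 1\), I only obtain \(\T(b_i) = \T(b_j) =: z\), and I split into cases on how the coordinates of \(b_i\) and \(b_j\) compare. If both row and column of \(b_i\) strictly exceed those of \(b_j\), a rectangle argument fills every node in the rectangle spanned by \(b_j\) and \(b_i\) with \(z\); row-strictness for odd letters then forces \(z\) to be even while column-strictness for even letters forces \(z\) to be odd, a contradiction. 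Otherwise \(b_i\) and \(b_j\) share a coordinate, and iterating (1) yields \(x_{i+1} = \cdots = x_{j+1} = z\); every bump from step \(i+1\) to step \(j\) is then in the fixed direction \(\varepsilon + \overline{z}\), forcing the coordinate perpendicular to the shared one to strictly increase as \(k\) runs from \(i\) to \(j\), which contradicts the equality of the shared coordinate between \(b_i\) and \(b_j\). The degenerate case \(b_i = b_j\) is absorbed by this shared-coordinate analysis, where the iterative argument forces a coordinate supposed to be unchanged to strictly increase.

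The main obstacle is the \(\varepsilon = \overline 1\) branch of (4): the weakness of \(\leq\) in place of strict \(<\) blocks the one-line contradiction available when \(\varepsilon = \overline 0\), and closing each sub-case requires carefully harvesting strict inequalities from the parity-dependent strictness conditions (ii) and (iii) in the definition of a semistandard tableau. The payoff is that the proof requires no case analysis on consecutive bump directions, only a global argument leveraging (1) and the tableau's semistandardness.
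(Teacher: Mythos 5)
Your proposal is correct and tracks the paper's own proof: parts (i)--(iii) are dispatched as routine consequences of steps (2), (4), (5) of the algorithm, and (iv) is argued by contradiction, first using (ii) and the non-decreasing property to force $\T(b_i) = \cdots = \T(b_j)$ and $\varepsilon = \overline{1}$, then eliminating the strictly-southeast case via a rectangle/parity argument and the shared-coordinate cases by observing that the bumps $b_{i+1},\ldots,b_j$ all move in the fixed direction $\varepsilon + \overline{z}$, pushing one coordinate up by $j - i$. One small imprecision in your final sentence: in the nondegenerate shared-coordinate case, the strict increase of the perpendicular coordinate contradicts the ordering $b_i \geq b_j$ along that coordinate (not the equality of the shared one, which remains shared); your degenerate case $b_i = b_j$ is phrased accurately and the substance of the whole argument is sound.
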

\begin{proof}
(i)--(iii) are obvious. For (iv), note that if \(\T\) is semistandard, then \(b_i \geq b_j\) and (ii) would imply \(\T(b_i) = \T(b_{i+1}) = \cdots = \T(b_j)\), \(\varepsilon = \overline{1}\), and either \(b_j \downarrow b_i\) or \(b_j \rightarrow b_i\). In the former case, semistandardness implies that \(\overline{\T(b_i)} =\overline{1}\), hence by the \(\overline{1}\)-insertion algorithm, \((b_{i+a})_{\overline{0}}= (b_i)_{\overline{0}}+a\) for \(0 \leq a \leq j-i\), so \(b_j\) is in a lower row than \(b_i\), a contradiction.  In the latter case, semistandardness implies that \(\overline{\T(b_i)} =\overline{0}\), hence by the insertion algorithm, \((b_{i+a})_{\overline{1}}= (b_i)_{\overline{1}}+a\) for \(0 \leq a \leq j-i\), so \(b_j\) is in a column rightward of \(b_i\), a contradiction.
\end{proof}

\begin{Lemma}\label{semisemi} Let \(\T\) be a semistandard \((\Xlet,\la)\)-tableau.
\begin{enumerate}
\item  \((\T \xleftarrow{\varepsilon} x)\) is semistandard for every \(x \in\Xlet\).
\item \((\T \xrightarrow{\varepsilon} u)\) is semistandard for every removable node \(u \in [\la]\).
\end{enumerate}
\end{Lemma}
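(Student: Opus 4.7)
The plan is to verify the three semistandardness axioms of \S\ref{alph} directly on the output tableau $\U := (\T \xleftarrow{\varepsilon} x)$, using the fact that $\U$ and $\T$ agree off the bumped node sequence $B := \{b_1, \ldots, b_m\}$. Since $\T$ is already semistandard, only pairs $(u,v) \in [\mu] \times [\mu]$ with $u \leq v$ and $\{u,v\} \cap B \neq \emptyset$ require checking, so everything reduces to a local analysis around each bumped node, together with a verification that $b_m$ is addable.

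For (i), at each $b_k$, step~(2) of the algorithm picks $b_k$ to be the smallest node in the scan set $\Nodes(\varepsilon + \bar{x}_k, i_k)$ with $x_k \substack{\varepsilon \\ <} \T(b_k)$. This minimality forces every node $v$ earlier than $b_k$ in that row or column to satisfy the inequality required of $\U(b_k) = x_k$, where the built-in bifurcation $\substack{\bar{0}\\ <} \; = \; <$ versus $\substack{\bar{1}\\ <} \; = \; \leq$ delivers strictness precisely where the semistandardness axioms demand it (column-strict for even letters, row-strict for odd letters). For pairs $(b_i, b_j)$ among bumped nodes themselves, Lemma~\ref{inverses} provides all necessary information: parts (i)--(iii) give monotonicity of the sequences $x_k$, $\T(b_k)$, and $\U(b_k)$ in the $\substack{\varepsilon\\<}$ ordering, while part (iv) yields the geometric constraint $b_i \not\geq b_j$. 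Step~(4), which sets $i_{k+1} = (b_k)_{\varepsilon + \bar{x}_{k+1}} + 1$, places the next bumped node in the row or column immediately past $b_k$; combined with (iv), this yields the comparisons between $b_k$ and nearby entries of $\U$ lying just past it in the orthogonal direction. The added node $b_m$ is addable because the algorithm halts only when the scan finds $\infty$, i.e., an empty slot that is the unique addable node in its row or column.

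For (ii), the argument is mirror-symmetric to (i). Step~(3) of extraction selects $c_{j+1}$ to be the \emph{greatest} node in $\Nodes(\varepsilon + \bar{y}_j, i)$ with $y_j \substack{\varepsilon\\>} \U(c_{j+1})$, and this maximality plays the role symmetric to the minimality in insertion, producing the analogous inequalities in the opposite direction. Alternatively, one can first establish that $\varepsilon$-extraction inverts $\varepsilon$-insertion and deduce (ii) from (i), though this would amount to extra work compared to a direct verification. Throughout both parts, the main obstacle is bookkeeping across the four sub-cases determined by $\varepsilon \in \{\bar{0},\bar{1}\}$ and the parity of $\bar{x}_k$ (respectively $\bar{y}_j$) at each step, and carefully tracking the interleaved row/column shifts produced by step~(4). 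The parity-sensitive comparison $\substack{\varepsilon\\<}$ was designed precisely so that these shifts line up with the semistandardness axioms; checking this alignment is the main content of the proof.
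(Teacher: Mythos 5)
Your decomposition into pairs meeting the bumped-node set, your use of Lemma~\ref{inverses} for pairs of bumped nodes, and your reading of the scan minimality for comparisons \emph{along} the scan direction are all sound, and they match the skeleton of the paper's argument. The gap is in your treatment of the comparison \emph{orthogonal} to the scan direction, which is where the real work is. Suppose $\varepsilon=\bar0$ and $x_k$ is even, so $b_k$ arises from a row scan and lies in the row just below $b_{k-1}$; let $u$ be the node directly above $b_k$. Column-strictness at $b_k$ requires $(\T\xleftarrow{\varepsilon}x)(u)<x_k$. From the data you cite you only learn that $u$ lies in $b_{k-1}$'s row, weakly to its left, hence $\T(u)\le\T(b_{k-1})=x_k$ --- but this is not strict: both entries are even and $\T$ is only row-weak for even letters, so $\T(u)=x_k$ is a priori possible when $u$ is not itself a bumped node. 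Step~(4) places $b_{k+1}$ in the row \emph{below} $b_k$ and says nothing about $u$, and Lemma~\ref{inverses}(iv) only rules out $u=b_j$ for $j>k$; neither ``yields'' the needed strict inequality as you claim.

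The paper closes this exact gap by introducing the intermediate tableaux $\T_1,\dots,\T_m$ (modifying the bumped entries one at a time) and proving semistandardness of $\T_k$ by induction on $k$, so that $\T_{k-1}(u)\le\T_{k-1}(b_{k-1})=x_{k-1}<x_k$ is available from the inductive hypothesis: the strict estimate at $b_k$ hinges on the \emph{already-updated} entry at $b_{k-1}$, not the original one. A non-inductive ``local'' check can in fact be pushed through, but only by analyzing the position of $b_{k-2}$ (it lies in column $(b_{k-1})_{\bar1}-1$ in a row $\ge(b_k)_{\bar0}$) and chaining monotonicity of $\T$; nothing of the sort appears in your sketch, and the phrase ``everything reduces to a local analysis around each bumped node'' is precisely what fails --- the constraints are entangled across successive bumps. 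Two smaller remarks: you propose enumerating all four parity sub-cases directly, while the paper handles only $\varepsilon+\bar{x}_k=\bar0$ and passes the other case through the dual-conjugate tableau via Lemmas~\ref{dualconjinsertion} and~\ref{dualconj}, halving the work; and your observation that (ii) is best proved directly rather than by invertibility agrees with the paper.
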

\begin{proof}
(i) Let \(b_1, \ldots, b_m\) be the bumped node sequence, and let \(x_1, \ldots, x_m\) be the bumped letter sequence for the insertion \((\T \xleftarrow{\varepsilon} x)\). For \(1 \leq i \leq m\), let \(\T_i\) be the tableau defined by setting \(\T_i(b_j) = x_j\) for \(1 \leq j \leq i\), and \(\T_i(u) = \T(u)\) for all other \(u \in [\lambda]\). It is easy to see that \(\T_1\) is semistandard. Now we argue that \(\T_k\) is semistandard by induction.

Assume \(\varepsilon + \overline{x}_k = \overline{0}\). Then \((b_k)_{\overline{0}} = (b_{k-1})_{\overline 0} + 1\) by the algorithm. If \(z\) is the node directly below \(b_{k-1}\), then \(z \neq b_i\) for any \(1 \leq i \leq k-1\) by Lemma \ref{inverses}(iv). By semistandardness of \(\T_{k-1}\) then
\begin{align*}
x_k = \T_{k-1}(b_{k-1}) \leq \T_{k-1}(z) = \T(z),
\end{align*}
so \(b_k \rightarrow z\), and thus \(b_k \nearrow b_{k-1}\). Let \(l,r\) be the nodes to the immediate left and right of \(b_k\), respectively. Then 
\begin{align*}
\T_k(l)=\T_{k-1}(l) \leq \T(l) \;\substack{\varepsilon + \overline{1} \\ <} \;x_k \;\substack{\varepsilon \\ <} \;\T(b_k) =\T_{k-1}(b_k)\leq \T_{k-1}(r) = \T_k(r). 
\end{align*}
Thus the \((b_k)_{\overline{0}}\)th row of \(\T_k\) is non-decreasing. Moreover, if \(\overline{x}_k =\overline{1}\), then \(\varepsilon =\overline{1}\), so \(\T_{k}(l) < x_k = \T_k(b_k)\). If \(x_k = \T_k(r) \), then \(\T_{k-1}(b_k) = \T_{k-1}(r)\), yet both are odd, a contradiction of the semistandardness of  \(\T_{k-1}\). Thus \(\T_k(b_k)=x_k < \T_k(r)\). Thus the \((b_k)_{\overline{0}}\)th row of \(\T_k\) is row-strict with respect to odd letters.

Let \(u,d\) be the nodes directly above and below \(b_k\), respectively. Then \(u \rightarrow b_{k-1}\) or \(u=b_{k-1}\). Then 
\begin{align*}
\T_k(u)=\T_{k-1}(u) \leq \T_{k-1}(b_{k-1}) = x_{k-1} \;\substack{\varepsilon \\ <}\;  x_k \;\substack{\varepsilon \\ <}\;\T(b_k) = \T_{k-1}(b_k) \leq \T_{k-1}(d) = \T_k(d).
\end{align*}
Thus the \((b_k)_{\overline{1}}\)th column of \(\T_k\) is non-decreasing. Moreover, if \(\overline{x}_k =\overline{0}\), then \(\varepsilon =\overline{0}\), and \(\T_k(u) < x_k = \T_k(b_k) < \T_k(d)\), so the \((b_k)_{\overline{1}}\)th column of \(\T_k\) is column-strict with respect to even letters.

Thus \(\T_k\) is semistandard if \(\varepsilon + \overline{x}_k= \overline{0}\). Assume on the other hand that \(\varepsilon + \overline{x}_k= \overline{0}\). Let \(\U = \T'^*\). Then, applying the above argument to the insertion \((\U \xleftarrow{  \varepsilon}x^*)\), we have that \(\U_k\) is semistandard. But by Lemma \ref{dualconjinsertion}, \(\U_k =\T_k'^*\). Thus by Lemma \ref{dualconj}, \(\T_k\) is semistandard. This completes the proof of (i).

(ii) Let \(c_1, \ldots, c_m\) be the {\em unbumped node sequence}, and \(y_1, \ldots, y_m\) be the {\em unbumped letter sequence} for the extraction \((\T \xrightarrow{\varepsilon} u)\). Let \(\mu\) be defined such that \([\mu]=[\lambda]\backslash \{u\}\). Let \(\T_1\) be the \((\Xlet,\mu)\)-tableau defined by \(\T_1(v) = \T(v)\) for all \(v \in [\mu]\). For \(2 \leq i \leq m\), let \(\T_i\) be the \((\Xlet,\mu)\)-tableau defined by \(\T_i(c_j) = y_{j-1}\) for \(2 \leq j \leq i\), and \(\T_i(v) = \T(v)\) otherwise. We have that \(\T_1\) is semistandard by assumption. We show by induction that \(\T_k\) is semistandard for all \(k\).

Assume \(\varepsilon +\overline{y}_{k-1} =\overline 0\). Let \(l,r\) be the nodes to the immediate left and right of \(c_k\). Then
\begin{align*}
\T_k(l) = \T_{k-1}(l) \leq \T_{k-1}(c_k) = \T(c_k) \; \substack{\varepsilon \\ <} \; y_{k-1}\; \substack{\varepsilon + \overline{1} \\ <}\; \T(r) = \T_{k}(r).
\end{align*}
Thus the \((c_k)_{\overline 0}\)-th row of \(\T_k\) is non-decreasing. Moreover, if \(\overline{y}_{k-1} = \overline 1\), then \(\varepsilon =  \overline 1\), so \(\T_k(c_k) = y_{k-1} < \T_k(r)\). If \(\T_k(l) = y_{k-1}\), then \(\T_{k-1}(l) = \T_{k-1}(c_k)\), yet both are odd, a contradiction since \(\T_{k-1}\) is semistandard by assumption. Thus \(\T_k\) is row-strict with respect to odd letters.

Let \(u, d\) be the nodes directly above and below \(c_k\), respectively. Then \(c_{k-1} \nearrow c_k\) and \(c_k\) is in the row above \(c_{k-1}\). So \(c_{k-1} \rightarrow d\) or \(c_{k-1} = d\). In either case we have
\begin{align*}
\T_k(u) = \T_{k-1}(u) \leq \T_{k-1}(c_k) = \T_k(c_k) \; \substack{ \varepsilon \\ <} \; y_{k-1} \; \substack{ \varepsilon \\ <} \; y_{k-2} = \T_{k-1}(c_{k-1}) \leq \T_{k-1}(d) = \T_k(d).
\end{align*}
Thus the \((c_k)_{\overline{1}}\)-th column of \(\T_k\) is non-decreasing. Moreover, if \(\overline{y}_{k-1} =\overline{0}\), then \(\varepsilon = \overline 0\), so \(\T_k(u) < y_{k-1} = \T_k(c_k) < \T_k(d)\), and thus \(\T_k\) is column-strict with respect to even letters. 

Therefore \(\T_k\) is semistandard if \(\varepsilon + \overline{y}_{k-1} = \overline{0}\). On the other hand assume \(\varepsilon + \overline{y}_{k-1} = \overline{1}\). Let \(\U = \T'^*\). Then, applying the above argument to the extraction \((\U \xrightarrow{\varepsilon} u')\), we have that \(\U_k\) is semistandard. But \(\U_k = \T_k'^*\) by Lemma \ref{dualconjinsertion}. Thus by Lemma \ref{dualconj}, \(\T_k\) is semistandard.
\end{proof}

\begin{Lemma}
Let \(\T\) be a semistandard \((\Xlet,\lambda)\)-tableau.
\begin{enumerate}
\item \(\T=((\T \xleftarrow{ \varepsilon} x) \xrightarrow{ \varepsilon} A(\T, \varepsilon, x))\) for every \(x \in \Xlet\).
\item \(\T=((\T \xrightarrow{ \varepsilon} u) \xleftarrow{ \varepsilon} R(\T, \varepsilon, u))\) for every removable node \(u \in [\lambda]\).
\end{enumerate}
\end{Lemma}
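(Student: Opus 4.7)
The plan is to show that insertion and extraction are step-by-step inverse processes, with the two operations retracing each other's paths in reverse. For \textup{(i)}, let $b_1, \ldots, b_m$ be the bumped node sequence and $x_1, \ldots, x_m$ the bumped letter sequence for $(\T \xleftarrow{\varepsilon} x)$, and set $\U := (\T \xleftarrow{\varepsilon} x)$, which is semistandard by Lemma \ref{semisemi}\textup{(i)}. I would prove by induction on $j$ that the extraction $(\U \xrightarrow{\varepsilon} b_m)$ has unbumped node sequence $c_j = b_{m-j+1}$ and unbumped letter sequence $y_j = x_{m-j+1}$. Granting this, the extracted letter is $y_m = x_1 = x$, and the resulting tableau coincides with $\T$ by construction, giving \textup{(i)}. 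The proof of \textup{(ii)} is entirely symmetric: apply the same induction to the insertion of $R(\T,\varepsilon,u)$ into $(\T \xrightarrow{\varepsilon} u)$.

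The base case $j=1$ is immediate since $c_1 = b_m$ and $y_1 = \U(b_m) = x_m$. For the inductive step, the insertion rule (Step 4) forces $(b_{m-j+1})_{\varepsilon + \bar{x}_{m-j+1}} = (b_{m-j})_{\varepsilon + \bar{x}_{m-j+1}} + 1$, so the extraction's choice of $i = (c_j)_{\varepsilon + \bar{y}_j} - 1$ lands in the row or column $\Nodes(\varepsilon + \bar{x}_{m-j+1}, i)$ containing $b_{m-j}$. The required inequality $\U(b_{m-j}) \substack{\varepsilon\\<} y_j$ holds because $\U(b_{m-j}) = x_{m-j}$ and Lemma \ref{inverses}\textup{(i)} gives $x_{m-j} \substack{\varepsilon\\<} x_{m-j+1} = y_j$.

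The crux of the proof is verifying that $b_{m-j}$ is the \emph{greatest} node in $\Nodes(\varepsilon + \bar{x}_{m-j+1}, i)$ with this property. For any node $z$ strictly beyond $b_{m-j}$ in the other coordinate, Lemma \ref{inverses}\textup{(iv)} rules out $z$ being an earlier bumped node $b_k$ with $k < m-j$, since such $b_k$ must satisfy $b_k \not\geq b_{m-j}$. A short case analysis (depending on whether $\varepsilon + \bar{x}_{m-j+1} = \bar 0$ or $\bar 1$, and on how successive bumps are forced into strictly later rows or columns) shows that $z$ is also not a later $b_k$ in this row/column, so $\U(z) = \T(z)$. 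Combining the minimality of $b_{m-j+1}$ in the insertion algorithm with semistandardness of $\T$ bridging consecutive rows or columns then yields $\T(z) \not\substack{\varepsilon\\<} x_{m-j+1}$, as needed. At step $m$ the algorithm halts because $i = (b_1)_{\varepsilon + \bar{x}_1} - 1 = 0$ by the initial condition of insertion.

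The main obstacle will be the case analysis in the maximality step, which must distinguish $\varepsilon + \bar{x}_{m-j+1} = \bar 0$ from $\bar 1$ and respect both flavors of semistandardness (strict $<$ on even-letter columns versus $\leq$-style comparisons on odd-letter rows). A cleaner alternative would be first to establish the statement for standard tableaux---where the essential content appears in \cite{Haiman}---and then transfer to the semistandard setting via Lemma \ref{standsame}; this trades the direct bookkeeping above for reliance on that bridging result.
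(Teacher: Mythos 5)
The paper's own proof is a one-liner: it says that $(\T \xleftarrow{\varepsilon} x)$ and $(\T \xrightarrow{\varepsilon} u)$ are semistandard by Lemma \ref{semisemi}, and that insertion and extraction are ``inverse processes by construction.'' Your proposal is essentially a fleshed-out verification of that same assertion, also set up by first invoking Lemma \ref{semisemi}. The alternative you mention (standardize and appeal to Lemma \ref{standsame}/Haiman) is fine too, but the core argument you choose---tracking that $c_j = b_{m-j+1}$ and $y_j = x_{m-j+1}$ by induction---is the right framework.

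There is, however, one genuine inaccuracy in the key maximality step. You assert that a case analysis shows $z$ ``is also not a later $b_k$ in this row/column, so $\U(z)=\T(z)$.'' This is false in general: once the insertion path changes direction at a mixed-parity letter, it can return to an earlier row or column. For instance with $\Xlet = \{\hat 1 < 1 < \hat 2 < 2 < \hat 3 < 3 < \hat 4 < 4\}$ (carets odd) and
\[
\T=\ytableausetup{centertableaux}
\begin{ytableau}
 1 & 3 \\
\hat 2 & 4
\end{ytableau},
\]
inserting $\hat 1$ with $\varepsilon = \overline 0$ gives bumped node sequence $(1,1),(2,1),(1,2),(2,2),(3,1)$; here $b_3=(1,2)$ lies in the same row as $b_1=(1,1)$ and strictly to its right, yet $3 > 1$. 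So for such $z = b_k$ one has $\U(z) = x_k \ne \T(z)$, and the conclusion ``$\U(z)=\T(z)$'' is wrong. What actually saves the maximality is Lemma \ref{inverses}(i) together with a short additional argument for the equality case: since $x_{m-j+1}\;\substack{\varepsilon\\ <}\;x_k$, the selection inequality $y_j\;\substack{\varepsilon\\ >}\;\U(b_k)$ fails whenever $\varepsilon=\overline 0$; and when $\varepsilon=\overline 1$, equality $x_{m-j+1}=x_k$ would force $x_{m-j+1}=x_{m-j+2}=\cdots=x_k$ to all have the same parity, hence all those bumps occur in the same direction $\varepsilon+\overline{x}_{m-j+1}$, placing $b_k$ strictly past coordinate $i$ and hence outside the relevant row or column. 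With this correction the induction closes, and your termination observation ($i=(b_1)_{\varepsilon+\overline{x}_1}-1=0$) is correct. So the approach matches the paper; the stated intermediate claim just needs to be replaced by the Lemma \ref{inverses}(i)-based argument above.
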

\begin{proof}
By Lemma \ref{semisemi}, \((\T \xleftarrow{ \varepsilon} x)\) and \((\T \xrightarrow{ \varepsilon} u)\) are semistandard tableaux, and \(\varepsilon\)-insertion and \(\varepsilon\)-extraction are inverse processes by construction.
\end{proof}

 The following lemma is a key tool in generalizing some results proved for standard tableaux to the more general case of semistandard tableaux.

\begin{Lemma}\label{standsame}
Let \(\T\) be a semistandard \((\Xlet, \la)\)-tableau, and let \(\T^\bullet\) be a \(\bullet\)-standardization of \(\T\). 
Let \(y \in \Xlet^\bullet\) be such that
\begin{enumerate}
\item  \(x\prec y\) if \(\varepsilon + \overline{y} = \overline 0\),
\item \(x\succ y\) if \(\varepsilon + \overline{y} = \overline 1\),
\end{enumerate}
for every \(x \in \T^\bullet\) such that \(\hat \bullet(x) =\hat \bullet(y)\). Then \((\T^\bullet\xleftarrow{\varepsilon} y)\) is a \(\bullet\)-standardization of \((\T \xleftarrow{\varepsilon} \hat \bullet(y))\). Moreover, if \(b_1, \ldots, b_k\) is the bumped node sequence for the insertion \((\T \xleftarrow{\varepsilon} \hat \bullet (y))\), and \(b_1^\bullet, \ldots, b_m^\bullet\) is the bumped node sequence for the insertion \((\T^\bullet \xleftarrow{\varepsilon} y)\), then \(k=m\) and \(b_i = b_i^\bullet\) for all \(i\).
\end{Lemma}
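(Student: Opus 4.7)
The plan is to proceed by simultaneous induction on the step number $j$ of the two insertion algorithms, maintaining as invariant that the bumped nodes and underlying letters agree (i.e., $b_i = b_i^\bullet$ for $i < j$, and $\hat\bullet(x_i^\bullet) = x_i$ for $i \leq j$) and that the current inserting letter $x_j^\bullet$ satisfies the extremality hypothesis of the lemma---but restricted to copies of $x_j$ in the row or column $\Nodes(\varepsilon + \overline{x_j}, i)$ currently being scanned at step $j$, rather than in all of $\T^\bullet$. The base case at $j=1$ is precisely the hypothesis on $y$.

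For the inductive step, I first show the same node is bumped at step $j$. For any node $v$ in the scanned row or column with $\hat\bullet(\T^\bullet(v)) \neq x_j$, the standardized comparison $x_j^\bullet \,\substack{\varepsilon \\ <}\, \T^\bullet(v)$ agrees with the unstandardized comparison $x_j \,\substack{\varepsilon \\ <}\, \T(v)$, since the order on $\Xlet^\bullet$ refines the order on $\Xlet$. When $\hat\bullet(\T^\bullet(v)) = x_j$, the two underlying letters are equal, and the restricted invariant together with the parity-dependent translation between $\prec$ and $<$ on $\Xlet^\bullet$ ensures the standardized comparison resolves the same way. Hence $b_j = b_j^\bullet$, and setting $x_{j+1}^\bullet := \T^\bullet(b_j)$ gives $\hat\bullet(x_{j+1}^\bullet) = x_{j+1}$.

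Propagating the extremality invariant to $x_{j+1}^\bullet$ is the main obstacle. Writing $b_j = (r,s)$, Lemma \ref{inverses}(i) shows that no previously bumped letter shares the underlying letter $x_{j+1}$, so only the unchanged entries of $\T^\bullet$ in the newly scanned row or column need be examined. The semistandardness of $\T$---column-strictness for even letters and row-strictness for odd letters, combined with non-decreasingness---forces any copy of $x_{j+1}$ in the new row or column to lie at a position $w$ whose relation to $b_j$ is $w \nearrow b_j$ or $b_j \nearrow w$, depending on parities. The $\bullet$-standardness of $\T^\bullet$ then supplies the required $\prec$-relation between $\T^\bullet(w)$ and $\T^\bullet(b_j) = x_{j+1}^\bullet$. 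A short case analysis on the parities of $x_j$, $x_{j+1}$, and $\varepsilon$ completes the propagation.

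Both insertions terminate at the same step because $\T(v) = \infty$ exactly when $\T^\bullet(v) = \infty$, so $k = m$ and the bumped node sequences coincide. That $(\T^\bullet \xleftarrow{\varepsilon} y)$ is a $\bullet$-standardization of $(\T \xleftarrow{\varepsilon} \hat\bullet(y))$ then follows: the matching sequences give the $\hat\bullet$-relation on the two tableaux, standardness of the output holds because $y \notin \T^\bullet$ by hypothesis, and the $\bullet$-standardness condition at any pair $u \nearrow v$ involving a modified node is supplied by the extremality invariant verified along the bumping path, while pairs not involving a modified node inherit the condition from $\T^\bullet$.
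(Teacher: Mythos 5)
Your plan of inducting on the bumping step, maintaining the invariant that corresponding bumped nodes agree, is the same top-level idea as the paper. But two of your steps have genuine gaps.

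First, you invoke Lemma~\ref{inverses}(i) to conclude that ``no previously bumped letter shares the underlying letter $x_{j+1}$.'' That is only true when $\varepsilon=\overline 0$, where the bumped-letter sequence is strictly increasing. When $\varepsilon=\overline 1$ the comparison in Lemma~\ref{inverses}(i) is $\substack{\overline 1 \\ <}\;=\;\leq$, so a run $x_i = x_{i+1} = \cdots = x_j$ of equal bumped letters is perfectly possible. The paper's proof for $\varepsilon = \overline 1$ has to handle this situation explicitly (in the sub-case where both $u=b_i$ and $v=b_j$ are bumped nodes with $\T(u)=\T(v)$); your argument simply excludes it. This is not a cosmetic issue: it is precisely the phenomenon the author flags as ``an inherent discrepancy'' requiring separate treatment.

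Second, your closing sentence disposes of the verification that $(\T^\bullet\xleftarrow{\varepsilon}y)$ is $\bullet$-standard by asserting that for any pair $u\nearrow v$ involving a bumped node, the $\bullet$-standardness condition ``is supplied by the extremality invariant verified along the bumping path.'' But you defined that invariant only for letters in the row or column being scanned at each step, whereas the pair $u\nearrow v$ in condition~(ii) of $\bullet$-standardness can involve one bumped node $b_i$ and one node $u$ (or $v$) far from any scanned row or column. The paper spends roughly half the proof on exactly this: a two-case analysis (whether the bumped node is $u$ or $v$), further split by whether $i=1$ or $i\geq 2$ and by the parity of $x_i$, repeatedly invoking semistandardness of $\T$ and of $\T_{\hat\bullet(y)}$ together with $\bullet$-standardness of $\T^\bullet$ to rule out the geometric configurations $u\searrow b_{i-1}$ or $v\searrow b_{i-1}$. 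None of this is a consequence of a row-/column-local invariant, so as written the proposal does not establish the final claim.

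Your observation about termination ($\T(v)=\infty$ iff $\T^\bullet(v)=\infty$) is correct, and the opening induction is close in spirit to the paper's. The missing pieces are the $\varepsilon=\overline 1$ repeated-letter case and the global case analysis for $\bullet$-standardness.
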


\begin{proof}
We will first prove the result in the case \(\varepsilon = \overline 0\), so that \(x < y\) for all \(x \in \T^\bullet\) such that \(\hat \bullet(x) = \hat \bullet (y)\). We will write \(\T_y^\bullet\) for \((\T^\bullet \xleftarrow{\overline 0} y)\) and \(\T_{\hat\bullet(y)}\) for \((\T \xleftarrow{\overline{0}} \hat\bullet(y))\). We will also write \(x_1,\ldots, x_k\) be the bumped letter sequence for the insertion \((\T \xleftarrow{\varepsilon} \hat \bullet (y))\), and \(x_1^\bullet, \ldots, x_m^\bullet\) for the bumped letter sequence for the insertion \((\T^\bullet \xleftarrow{\varepsilon} y)\). We will first prove by induction that \(b_i = b_i^\bullet\) for all \(i\), hence \(k=m\).

We have \(y < \T^\bullet(b_1^\bullet)\), so \(\hat\bullet(y) \leq \hat\bullet(\T^{\bullet}(b_1^\bullet)) = \T(b_1^\bullet)\). Moreover, by the assumption on \(y\), we have that \(\hat\bullet(y) \neq \T(b_1^\bullet)\), so \(\hat\bullet(y) < \T(b_1^\bullet)\). If \(\overline{y}=\overline 0\) (resp. if \(\overline{y}=\overline 1\)), let \(u\) be the node directly to the left (resp. directly above) of \(b_1^\bullet\). Then \(\T^\bullet(u) < y\), so \(\T(u) \leq \hat\bullet(y)\). Thus \(b_1 =b_1^\bullet\).

Now assume that \(b_i = b_i^\bullet\). If \(\overline{\T^\bullet(b_i^\bullet)} = \overline{0}\) (resp. if \(\overline{\T^\bullet(b_i^\bullet)} = \overline{1}\)), then \(b_{i+1}^\bullet \nearrow b_i\) (resp. \(b_i \nearrow  b_{i+1}^\bullet\)), so \(\T^\bullet(b_{i+1}^\bullet) < \T^\bullet(b_i)\) if \(\T(b_{i+1}^\bullet) = \T(b_i)\), since \(\T^\bullet\) is a \(\bullet\)-standardization of \(\T\). However, we also have \(\T^\bullet(b_i) < \T^\bullet(b^\bullet_{i+1})\), so \(\T(b_i) \leq \T(b_{i+1}^\bullet)\), and hence \(x_{i+1} = \T(b_i) < \T(b_{i+1}^\bullet)\). If \(\overline{\T^\bullet(b_i^\bullet)} =\overline 0\) (resp. if \(\overline{\T^\bullet(b_i^\bullet)}=\overline{1}\)), let \(u\) be the node directly to the left (resp. directly above) \(b_{i+1}^\bullet\). Then \(\T^\bullet(u) < \T^\bullet(b_i)\), so \(\T(u) \leq \T(b_i) = x_{i+1}\), and thus \(b_{i+1}=b_{i+1}^\bullet\).

Therefore we have that \(\hat\bullet( \T^\bullet_y) = \T_{\hat\bullet(y)}\), and by construction \(\T^\bullet_y\) is standard. Now we show that \(\T^\bullet_y\) is \(\bullet\)-standard. Let \(u \nearrow v \in [\textup{sh}(\T^\bullet_y)]\), and assume \(\T_{\hat\bullet(y)}(u)= \T_{\hat\bullet(y)}(v)\). If neither \(u\) nor \(v\) is equal to a bumped node \(b_i^\bullet\), then the result follows since \(\T^\bullet\) is \(\bullet\)-standard. 
On the other hand if both are bumped nodes, say \(u=b_i\) and \(v = b_j\), then \(\T^\bullet_y(u) = x_i^\bullet\), \(\T^\bullet_y(v) = x_j^\bullet\), and \(x_i = x_j\). But this cannot happen in \(\bar 0\)-insertion for distinct \(i\) and \(j\). This leaves the cases where exactly one of \(u\), \(v\) is a bumped node. We consider the two cases separately:

\begin{enumerate}
\item[(a)]
Assume that \(v=b_i\) is a bumped node, and \(u\) is not. Then \(\T_y^\bullet(v) = x_i^\bullet\), and \(\T(u) = \T_{\hat\bullet(y)}(u)=\T_{\hat\bullet(y)}(v) = x_i\).
Note that if \(\overline{y}=\overline{1}\), then \(\overline{\hat\bullet(y)}=\overline{1}\), and \(b_1\) is in the first column. Then by Lemma \ref{inverses}, \(\hat\bullet(y)=\T_{\hat\bullet(y)}(b_1) < \T_{\hat\bullet(y)}(w)\) for every node \(w\) directly below \(b_1\). Then, since \(\T_{\hat\bullet(y)}\) is semistandard, \(\T_{\hat\bullet(y)}(b_1) < \T_{\hat\bullet(y)}(w)\) for every node \(w\) such that \(b_1 \searrow w\). Thus \(b_1 \nearrow w\) for every node \(w\) such that \(\hat\bullet(y)=\T_{\hat\bullet(y)}(b_1)=\T_{\hat\bullet(y)}(w)\). Thus if \(i=1\), then \(\overline{y}=\overline 0\) and by the assumption on \(y\),  \(\T^\bullet_y(u)=\T^\bullet(u) < y = x_1^\bullet = \T_y^\bullet(v)\) as required.  

Assume \(i\geq 2\). Then \(x_i^\bullet = \T^\bullet(b_{i-1})\). If \(\overline{x}_i = \overline 0\), then \(b_i \nearrow b_{i-1}\). Thus \(u \nearrow b_{i-1}\), so \(\T^\bullet_y(u) = \T^\bullet(u) < \T^\bullet(b_{i-1}) = \T_y^\bullet (v)\) since \(\T^\bullet\) is \(\hat\bullet\)-standard. Assume \(\overline{x}_i =\overline 1\). Then \( b_{i-1} \nearrow b_{i}=v\). If \(v\) is directly above \(u\), then \(\T_y^\bullet(v) < \T_y^\bullet(u)\) since \(\T_y^\bullet\) is standard.
Assume \(v\) is not directly above \(u\). If it is not the case that \(u \nearrow b_{i-1}\), then it must be that \(u \searrow b_{i-1}\). But then since \(\T_y(b_{i-1}) = x_{i-1} < \T(b_{i-1})=\T(u)=\T_y(u)\), this cannot be true. Therefore \(u \nearrow b_{i-1}\), and again  we have \(\T^\bullet_y(u) = \T^\bullet(u) > \T^\bullet(b_{i-1}) = \T_y(v)\), as required.

\item[(b)]
Assume that \(u = b_i\) is a bumped node, and \(v\) is not. Then \(\T_y^\bullet(u) = x_i^\bullet\), and \(\T(v) = \T_{\hat\bullet(y)}(v) = \T_{\hat\bullet(y)}(u)=x_i\). Note that if \(\overline{y} = \overline{0}\), then \(w \nearrow b_1\) for every node \(w\) such that \(\hat\bullet(y)=\T_{\hat\bullet(y)}(b_1)=\T_{\hat\bullet(y)}(w)\). Thus if \(i=1\), then \(\overline{y}=\overline 1\) and \(\T^\bullet(u) = y = x_1^\bullet > \T^\bullet(v)\) as required.

Assume \(i\geq 2\). Then \(x_i^\bullet = \T^\bullet(b_{i-1})\). If \(\overline{x}_i=\overline{1}\), then \(b_{i-1} \nearrow b_{i}\). Thus \(b_{i-1} \nearrow v\), so \(\T^\bullet_y(u) = \T^\bullet(b_{i-1}) > \T^\bullet(v) = \T_y^\bullet(v)\). Assume \(\overline{x}_i = \overline 0\). If \(v\) is directly to the right of \(u\), then \(\T_y^\bullet(u) <\T_y^\bullet(v)\). Assume \(v\) is not directly to the right of \(u\). Then if it is not the case that \(b_{i-1} \nearrow v\), then it must be that \(v \searrow b_{i-1}\). But then since \(\T_y(b_{i-1}) = x_{i-1} < \T(b_{i-1})=\T(v)=\T_y(v)\), this cannot be true. Therefore \(b_{i-1} \nearrow v\), and thus we have \(\T^\bullet_y(u) = \T^\bullet(b_{i-1}) > \T^\bullet(v) = \T_y(v)\), as required. 
\end{enumerate}

This completes the proof of the lemma when \(\varepsilon = \overline{0}\). Now assume \(\varepsilon = \overline 1\). Then \(x > y\) for all \(x \in \T^\bullet\). This proof proceeds along the same lines as the first part, but because there is an inherent discrepancy in the comparisons \(\substack{\overline 0 \\ <} = <\) and \(\substack{ \overline 1 \\ <} = \leq\)  we will provide the details in full. We'll write \(T_y^\bullet\) for \((T^\bullet \xleftarrow{\overline 1} y)\) and \(\T_{\hat\bullet(y)}\) for \((\T \xleftarrow{\overline 1} \hat\bullet(y))\). We will prove by induction that \(b_i = b_i^\bullet\) for all \(i\), hence \(k=m\).

We have \(y < \T^\bullet(b_1^\bullet)\), so \(\hat\bullet(y) \leq \hat\bullet(\T^\bullet(b_1^\bullet)) = \T(b_1^\bullet)\) since \(\T^\bullet\) is a \(\bullet\)-standardization of \(\T\).  If \(\overline{y} =\overline 0\) (resp. if \(\overline y = \overline 1\)), let \(u\) be the node directly above (resp. directly to the left of) \(b_1^\bullet\). Then \(\T^\bullet(u) < y\), so \(\T(u) \leq \hat\bullet(y)\). Moreover, by the assumption on \(y\), we have that \(\hat\bullet(y) \neq \T(u)\), so \(\hat\bullet(y) > \T(u)\). Thus \(b_1 =b_1^\bullet\).

Now assume that \(b_i = b_i^\bullet\). We have \(x^\bullet_{i+1} = \T^\bullet(b_i) < \T^\bullet(b_{i+1}^\bullet)\), so \(x_{i+1}=\T(b_i) \leq \T(b_{i+1}^\bullet)\). If \(\overline{\T^\bullet(b_i)}=\overline 1\) (resp. \(\overline{\T^\bullet(b_i)} = \overline 0\)), let \(u\) be the node directly to the left of (resp. directly above) \(b_{i+1}^\bullet\), and note that \(u\nearrow b_i\) (resp. \(b_i \nearrow u\)), so that \(\T^\bullet(u)> \T^\bullet(b_i) = x^\bullet_{i+1}\) if \(\T(u) = \T(b_i) = x_{i+1}\). But \(x_{i+1}^\bullet > \T^\bullet(u)\) by the definition of \(b_{i+1}^\bullet\), so it must be that \(\T(u) \neq x_{i+1}\). Therefore \(x_{i+1} > \T(u)\), and so \(b_{i+1} = b^\bullet_{i+1}\).

Therefore we have that \(\hat\bullet( T^\bullet_y) = T_{\hat\bullet(y)}\), and by construction \(\T^\bullet_y\) is standard. Now we show that \(\T^\bullet_y\) is \(\bullet\)-standard. Let \(u \nearrow v \in [\textup{sh}(\T^\bullet_y)]\), and assume \(\T_{\hat\bullet(y)}(u)= \T_{\hat\bullet(y)}(v)\). If neither \(u\) nor \(v\) is equal to a bumped node \(b_i^\bullet\), then the result follows since \(\T^\bullet\) is \(\bullet\)-standard. 

On the other hand if both are bumped nodes, say \(u=b_i\) and \(v = b_j\), then \(\T^\bullet_y(u) = x_i^\bullet\), \(\T^\bullet_y(v) = x_j^\bullet\), and \(x_i = x_j\). Note in general that if \(x_k = x_{k+1}\), then \(b_k \nearrow b_{k+1}\) if \(\overline{x}_k =\overline 0\), and \(b_{k+1} \nearrow b_k\) if \(\overline{x}_k =\overline 1\). Thus, if \(i<j\), then \(x_i = x_{i+1} = \cdots = x_j\) and \(b_i \nearrow b_j\) imply that \(\overline{x}_i =\overline 0\) and \(\T^\bullet_y(u) = \T_y^\bullet(b_i) = x_i^\bullet  < x_j^\bullet = \T_y^\bullet(b_j) = \T^\bullet_y(v)\), as required. On the other hand, if \(j<i\), then \(x_j = x_{j+1} = \cdots = x_i\) and \(b_i \nearrow b_j\) imply that \(\overline{x}_i = \overline 1\) and \(\T^\bullet_y(u) = \T_y^\bullet(b_i) = x_i^\bullet  > x_j^\bullet = \T_y^\bullet(b_j) = \T^\bullet_y(v)\), as required. This leaves the cases where exactly one of \(u,v\) is a bumped node. We consider the two cases separately:

\begin{enumerate}
\item[(a)]
Assume that \(v=b_i\) is a bumped node, and \(u\) is not. Then \(\T_y^\bullet(v) = x_i^\bullet\), and \(\T_{\hat\bullet(y)}(u) = \T_{\hat\bullet(y)}(v) = x_i\). Note that if \(\overline{y}= \overline 0\), then \(b_1 \nearrow w\) for every node \(w\) such that \(\hat\bullet(y)=\T_{\hat\bullet(y)}(w)\). Thus if \(i=1\), then \(\overline{y}=\overline 1\) and \(\T_y^\bullet(u) =\T^\bullet(u) > y = x_1^\bullet = \T_y^\bullet(v)\) as required. 

Assume \(i\geq 2\). Then \(x_i^\bullet = \T^\bullet(b_{i-1})\). If \(\overline{x}_i=\overline{1}\), then \(b_i \nearrow b_{i-1}\). Thus \(u \nearrow b_{i-1}\), so \(\T^\bullet_y(u) = \T^\bullet(u) > \T^\bullet(b_{i-1}) = \T_y^\bullet (v)\). Assume \(\overline{x}_i =\overline 0\). Since \(\T_{\hat\bullet(y)}(u) = \T_{\hat\bullet(y)}(v)=x_i\) and \(\T_{\hat\bullet(y)}\) is semistandard, it cannot be that \(v\) is directly above \(u\). If it is not the case that \(u \nearrow b_{i-1}\), then it must be that \(u \searrow b_{i-1}\). Moreover since \(\T(u) = \T(b_{i-1})\) and \(\T\) is semistandard, it cannot be that \(u\) is directly above \(b_{i-1}\), so \(u \SEarrow b_{i-1}\). But then \(\T_{\hat\bullet(y)}(u) = \T(u) < \T(b_{i-1}) = \T_{\hat\bullet(y)}(v)\), a contradiction. Therefore \(u \nearrow b_{i-1}\), and thus we have \(\T^\bullet_y(u) = \T^\bullet(u) < \T^\bullet(b_{i-1}) = \T^\bullet_y(v)\), as required.

\item[(b)]
Assume that \(u = b_i\) is a bumped node, and \(v\) is not. Then \(\T^\bullet(u) = x_i^\bullet\), and \(\T_{\hat\bullet(y)}(u) = \T_{\hat\bullet(y)}(v) =\T(v)= x_i\). Note that if \(\overline{y}=\overline{1}\), then \(w \nearrow b_1\) for every node \(w\) such that \(\hat\bullet(y)=\T_{\hat\bullet(y)}(w)\). Thus if \(i=1\), then \(\overline{y}=\overline 0\) and \(\T^\bullet(u) = y = x_1^\bullet < \T^\bullet(v)\) as required.  

Assume \(i\geq 2\). Then \(x_i^\bullet = \T^\bullet(b_{i-1})\). If \(\overline{x}_i=\overline{0}\), then \(b_{i-1} \nearrow b_{i}\). Thus \(b_{i-1} \nearrow v\), so \(\T^\bullet_y(u) = \T^\bullet(b_{i-1}) < \T^\bullet(v) = \T_y^\bullet(v)\).   Assume \(\overline{x}_i =\overline{1}\). Since \(\T_{\hat\bullet(y)}(u) = \T_{\hat\bullet(y)}(v) =x_i\) and \(\T_{\hat\bullet(y)}\) is semistandard, it cannot be that \(v\) is directly to the right of \(u\). If it is not the case that \(b_{i-1} \nearrow v\), then it must be that \(v \searrow b_{i-1}\). Moreover since \(\T(v) = \T(b_{i-1})\) and \(\T\) is semistandard, it cannot be that \(b_{i-1}\) is directly to the right of \(v\), so \(v \SEarrow b_{i-1}\). But then \(\T_{\hat\bullet(y)}(u) = \T(b_{i-1}) > \T(v)= \T_{\hat\bullet(y)}(v)\), a contradiction. Therefore \(b_{i-1} \nearrow v\), and thus we have \(\T^\bullet_y(u) = \T^\bullet(b_{i-1})  > \T^\bullet(v) = \T^\bullet_y(v)\), as required.
\end{enumerate}
This completes the proof of the lemma in the case \(\varepsilon = \overline{1}\).
\end{proof}

\section{Behavior of bumped nodes}\label{behavsec}

\subsection{Bumped node distribution} 
In this section we prove some technical results on the distribution of bumped nodes in \(\varepsilon\)-insertion, which will be of repeated use in \S\ref{successive}.

\begin{Lemma}\label{filler}
Let \(\T\) be a semistandard \((\Xlet, \lambda)\)-tableau, \(\varepsilon,\delta  \in \Z_2\), and \(x \in \Xlet\). Let \(b_1, \ldots, b_m\) be the bumped node sequence for the insertion \((\T \xleftarrow{ \varepsilon} x)\). If \(i<j\) and \((b_i)_\delta < (b_j)_\delta\), then there exists a sequence \(i \leq t_0 < \cdots < t_k < j\), where \(k = (b_j)_\delta - (b_i)_\delta -1\), such that \((b_{t_a})_\delta = (b_i)_\delta + a\), and \(\varepsilon + \overline{\T(b_{t_a})} = \delta\) for all \(a\).
\end{Lemma}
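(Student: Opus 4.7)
I plan to track the $\delta$-coordinate $f(n) := (b_n)_\delta$ along the bumped node sequence, and establish a simple dichotomy for how $f$ evolves from one step to the next.

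First I would verify two rules for the transition $f(n) \to f(n{+}1)$. \emph{Rule 1:} if $\varepsilon + \overline{\T(b_n)} = \delta$, then $f(n{+}1) = f(n) + 1$. This is immediate from step (4) of the $\varepsilon$-insertion algorithm, since in that case $b_{n+1} \in \Nodes(\delta, (b_n)_\delta + 1)$. \emph{Rule 2:} if instead $\varepsilon + \overline{\T(b_n)} = \delta + \bar 1$, then $f(n{+}1) \le f(n)$. To prove Rule 2, I would consider the node $z \in \Nodes$ defined by $z_\delta = (b_n)_\delta$ and $z_{\delta + \bar 1} = (b_n)_{\delta + \bar 1} + 1$; by construction $z$ lies in the same row or column (according to whether $\delta + \bar 1 = \bar 0$ or $\bar 1$) as $b_{n+1}$. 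The inequality $\T(z) \ge \T(b_n) = x_{n+1}$ is automatic from the non-decreasing property of $\T$. When $\varepsilon = \bar 1$ the selection comparison in step (2) is $\le$, and this already makes $z$ a candidate for $b_{n+1}$. When $\varepsilon = \bar 0$ strict inequality is needed; the equality $\T(z) = \T(b_n)$ would contradict semistandardness, because $\overline{\T(b_n)} = \varepsilon + \delta + \bar 1 = \delta + \bar 1$, so when $\delta = \bar 0$ the letter $\T(b_n)$ is odd and $z$ lies in the same row as $b_n$ (violating row-strictness for odd letters), while when $\delta = \bar 1$ the letter is even and $z$ lies in the same column (violating column-strictness for even letters). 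In either case $z$ is a candidate in step (2), so the minimality rule forces $(b_{n+1})_\delta \le z_\delta = (b_n)_\delta$.

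With these rules in hand, the conclusion follows from a ``first-crossing'' argument on the evolution of $f$. For each $a \in \{0, 1, \ldots, k\}$, let $s_a$ be the smallest index in $[i, j]$ with $f(s_a) \ge (b_i)_\delta + a + 1$; such an $s_a$ exists because $f(j) = (b_i)_\delta + k + 1$, and $s_a > i$ because $f(i) = (b_i)_\delta$. By Rule 2 the step $s_a{-}1 \to s_a$ cannot be non-advancing (else $f(s_a) \le f(s_a - 1) < (b_i)_\delta + a + 1$), so by Rule 1 it is advancing, giving $f(s_a - 1) = (b_i)_\delta + a$ and $\varepsilon + \overline{\T(b_{s_a - 1})} = \delta$. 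Setting $t_a := s_a - 1$ then produces the required indices; the strict chain $t_0 < t_1 < \cdots < t_k$ follows from $f(s_a) = (b_i)_\delta + a + 1 < (b_i)_\delta + (a{+}1) + 1$, which forces $s_a < s_{a+1}$ by the minimality defining $s_{a+1}$.

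The main obstacle I anticipate is Rule 2: its proof requires the case split on the parities of $\varepsilon$ and $\delta$ and a careful invocation of the appropriate semistandardness property (row- versus column-strictness) to promote the weak inequality $\T(z) \ge \T(b_n)$ to a strict one in the $\varepsilon = \bar 0$ cases.
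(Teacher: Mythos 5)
Your proposal is correct and takes essentially the same ``first‑crossing'' approach as the paper: both arguments locate, for each intermediate level, the first index at which the $\delta$-coordinate reaches that level, observe that the preceding step must be an advancing one, and read off the required node and parity condition (the paper phrases this as finding the first crossing of level $(b_i)_\delta + 1$ and then inducting; you simply handle all crossings $a = 0, \ldots, k$ at once). The only substantive difference is that you spell out the justification of Rule 2 (that a parity-$(\delta + \bar1)$ step cannot increase the $\delta$-coordinate, using non-decreasingness plus row- or column-strictness to upgrade $\ge$ to $>$ when $\varepsilon = \bar 0$), whereas the paper packages this as ``the algorithm implies''; your version is more careful on that point and is correct.
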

\begin{proof}
Let \(l\) be minimal such that \((b_{i+l})_\delta \geq (b_{i})_\delta+1\). Then \(i<l \leq j\). If \(\varepsilon + \overline{\T(b_{i+l-1})} = \delta + 1\), then the algorithm implies that \((b_{i+l})_\delta \leq (b_{i+l -1})_\delta\), a contradiction of the minimality of \(l\). Thus \(\varepsilon + \overline{\T(b_{i+l-1})} = \delta \), hence \((b_{i+l})_\delta = (b_{i+l -1})_\delta +1\), so by minimality of \(l\), we must have \((b_{i+l-1})_\delta = (b_i)_\delta\). Set \(t_0 = i+l-1\). Then \(i\leq t_0 < t_0+1 \leq j\), and \((b_{t_0 +1})_\delta = (b_i)_\delta+1\). Now the claim follows by induction.
\end{proof}

\begin{Lemma}\label{triple}
Let \(\T\) be a semistandard \((\Xlet, \lambda)\)-tableau, \(\varepsilon \in \Z_2\), and \(x \in \Xlet\). Let \(b_1, \ldots, b_m\) be the bumped node sequence for the insertion \((\T \xleftarrow{\varepsilon} x)\). Let \(i,j,k\) be such that
\begin{enumerate}
\item \(i,j < k\), 
\item \(b_i \NEarrow b_k \NEarrow b_j\),
\item \(((b_i)_{\overline{0}}, (b_j)_{\overline{1}}-1), ((b_i)_{\overline{0}}-1, (b_j)_{\overline{1}}) \in [\textup{sh}(\T \xleftarrow{ \varepsilon} x)]\).
\end{enumerate}
Then there exists some \(l>k\) such that
\begin{enumerate}
\item \(b_i \Rightarrow b_l\) and \(b_l \NEarrow b_i\), or;
\item \(b_j \Downarrow b_l\) and \(b_i \NEarrow b_l\).
\end{enumerate}
\end{Lemma}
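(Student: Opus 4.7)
Set $r := (b_i)_{\bar 0}$, $c := (b_j)_{\bar 1}$, $\alpha := (b_j)_{\bar 0}$, and $\beta := (b_i)_{\bar 1}$. The hypotheses place $b_i$ at $(r,\beta)$, $b_j$ at $(\alpha,c)$, and $b_k$ strictly inside the axis-aligned rectangle $\{(a,b) : \alpha < a < r,\ \beta < b < c\}$, while condition~(iii) asserts that the two ``elbow'' cells $(r,c-1)$ and $(r-1,c)$ both lie in $[\textup{sh}(\T\xleftarrow{\varepsilon}x)]$. The two alternative conclusions ask for a later bump $b_l$ lying on the row-$r$ segment strictly east of $b_i$ and strictly west of column $c$, or on the column-$c$ segment strictly south of $b_j$ and strictly north of row $r$.

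The plan is to trace the tail $b_{k+1}, \dots, b_m$ using two rigidity facts. First, $\varepsilon$-insertion advances monotonically at each step, pushing the row or column coordinate up by exactly one depending on the parity of $\varepsilon + \overline{\T(b_l)}$. Second, by Lemma~\ref{inverses}(iv), no bump with index $>k$ can lie weakly NW of $b_k$, $b_i$, or $b_j$, so the post-$k$ path stays in a restricted region. I then use condition~(iii) to locate the added node $b_m$: combined with the fact that a Young diagram is left-justified and top-justified, condition~(iii) forces row $r$ of the final shape to have length $\geq c-1$ and column $c$ of the final shape to have height $\geq r-1$. Up to the placement of $b_m$, this leaves three geometric cases: $b_m$ sits on row $r$ at some column $\leq c$, or on column $c$ at some row $\leq r$, or strictly NW of both elbows (so that both elbow cells already belonged to $[\lambda]$).

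In the first case, Lemma~\ref{filler} applied with $\delta = \bar 0$ to the pair $(b_k, b_m)$ produces intermediate bumps stepping through every row from $(b_k)_{\bar 0}+1$ up to $r$; the one in row $r$ lies strictly west of column $c$ by Lemma~\ref{inverses}(iv) applied against $b_j$, and strictly east of $b_i$ since it has index $>i$, giving conclusion~(i). The second case is symmetric, yielding conclusion~(ii). The residual ``NW'' case is the principal obstacle: since both elbow cells already lie in $[\lambda]$, the tableau $\T$ has well-defined values at $(r,c-1)$ and $(r-1,c)$ which can be compared with $\T(b_k), \T(b_i), \T(b_j)$ via semistandardness of $\T$ and Lemma~\ref{inverses}(i)--(iii). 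A case split on the parity $\varepsilon + \overline{\T(b_k)}$ — which determines whether $b_{k+1}$ lies immediately south or immediately east of $b_k$ — then lets one chase the monotone path forward and show it must still cross one of the two segments before exiting the interior rectangle, again via a use of Lemma~\ref{filler}. This last subcase is where I expect the bulk of the technical work to lie.
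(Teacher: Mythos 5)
Your case split on $b_m$ is not exhaustive, and the case you flag as ``the principal obstacle'' is in fact vacuous. If $b_m\leq(r-1,c-1)$ then the node directly south of $b_m$ lies weakly NW of $(r,c-1)\in[\textup{sh}(\T\xleftarrow{\varepsilon}x)]$, so it too is in that shape, contradicting that $b_m$ is a corner; thus $b_m$ is never weakly NW of $(r-1,c-1)$, and the third case never occurs. What your trichotomy \emph{omits} are the placements that actually do happen: $b_m=(r,c')$ with $c'>c$, $b_m=(r',c)$ with $r'>r$, or $b_m$ still further southeast of the rectangle spanned by $b_i$ and $b_j$. Even in your first case the details do not close. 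Lemma~\ref{filler} applied to $(b_k,b_m)$ with $\delta=\overline{0}$ produces bumps only in rows $(b_k)_{\overline{0}},\ldots,(b_m)_{\overline{0}}-1=r-1$, never in row $r$; and the claim that your row-$r$ candidate is ``strictly west of column $c$ by Lemma~\ref{inverses}(iv) applied against $b_j$'' is false: since $b_j$ sits in row $\alpha<r$, the relation $b_j\geq v$ already fails in the row coordinate for \emph{every} $v$ in row $r$, so Lemma~\ref{inverses}(iv) says nothing about the column of such a $v$.

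The paper never locates $b_m$ at all. It runs a double induction on the lexicographic pair $(n_{i,j},m_{i,j,k})$ with $n_{i,j}=(c_j-c_i)+(r_i-r_j)$ and $m_{i,j,k}=(c_j-c_k)+(r_i-r_k)$, tracing only the single next bump $b_{k+1}$: if it lands in row $r_i$ one is done outright; if it stays in the column of $b_k$ the triple $(i,k+1,j)$ has smaller $m$; otherwise it moves one column left and one reduces to the stair triple $(i,k+1,k)$ with smaller $n$, feeding its output back in as a new, lower, middle node. Condition (iii) is used there only to guarantee the insertion does not terminate inside the rectangle. A non-inductive route closer to your intuition is also available: a single $\varepsilon$-insertion step increases exactly one coordinate of the bump by $1$, and semistandardness of $\T$ forces the other coordinate not to increase; hence the \emph{first} bump $b_l$ with $l>k$ lying outside the open quadrant $\{(a,b):a<r,\ b<c\}$ (which exists because, as above, $b_m$ lies outside it) sits either exactly on row $r$ with column $<c$ or exactly on column $c$ with row $<r$, and one more application of Lemma~\ref{inverses}(iv), against $b_i$ in the first case and against $b_j$ in the second, supplies the remaining strict bound ($>\beta$, respectively $>\alpha$) needed for the conclusion.
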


\begin{proof}

We will call a triple \((i,k,j)\) which satisfies (i)--(iii) a {\em stair triple}. For compactness we'll write \(r_a\) for \((b_a)_{\overline 0}\) and \(c_a\) for \((b_a)_{\overline{1}}\). Define:
\begin{align*}
m_{i,j,k}&:=(c_j - c_k)+(r_i - r_k)\\
 n_{i,j}&:=(c_j-c_i) + (r_i-r_j)
\end{align*}
Note that \(2 \leq m_{i,j,k} \leq n_{i,j} -2\).

Take \({n}_{i,j}=4\), the least possible value for \(n_{i,j}\). Then \(m_{i,j,k} = 2\). Then \(b_k = (r_i-1,c_j-1)\), so \(b_k \in [\la]\) and thus cannot be the last bumped node. By Lemma \ref{inverses}, either \(b_{k+1} = (r_i, c_{i+1})\) (if \(\varepsilon + \overline{\T(b_i)}= \overline 0\)) or \(b_{k+1} = (r_j+1, c_{j})\) (if \(\varepsilon + \overline{\T(b_i)}= \overline 1\)). Taking \(l = k+1\), this completes the base case.

We argue by induction. Assume that \(i,j,k\) satisfy (i)-(iii), and further assume that the claim holds for all \(i',j',k'\) such that \(n_{i',j'} < n_{i,j}\), or \(n_{i',j'}=n_{i,j}\) and \(m_{i',j',k'} < m_{i,j,k}\).

Assume \(\varepsilon +\overline{\T(b_k)}=\overline{0}\) (the argument in the other case is exactly dual to what follows). Then \(r_{k+1} = r_k + 1\). If \(r_{k+1} = r_j\), then, taking \(l=k+1\), we are in case (i). Assume \(r_{k+1}<r_i\). If \(c_{k+1} = c_k\), we may apply the induction assumption to the stair triple \((i, k+1, j)\). Thus assume \(c_{k+1} < c_k\). Since \(k+1 > j\), it must be that \(c_{k+1} > c_j\). Now, apply the induction assumption to the stair triple \((i, k+1, k)\). This either gives a node \(b_l\) which satisfies (i), or \(b_l\) is such that \(c_l = c_k\) and \(r_k < r_l < r_i\). In the former case we are done, so assume the latter. Now apply the induction assumption to the stair triple \((i, l, j)\), and we are done.
\end{proof}

\subsection{Bumped nodes in successive insertions}\label{successive}

In this section we prove a key result which bounds the distribution of bumped nodes appearing in successive insertions. Theorem \ref{nodecontain}, together with Corollary \ref{addednodes} can be viewed as a generalization of \cite[Theorem 1]{Knuth} to the realm of superalphabets and \(\varepsilon\)-insertion. We begin by defining a certain set partition of the nodes of a Young diagram that naturally results from the \(\varepsilon\)-insertion process.

Let \(y \in \Xlet\), \(\varepsilon \in \Z_2\), and let \(\T\) be a semistandard \((\Xlet, \la)\)-tableau. Let \(b_1^y, \ldots, b_{m}^y\) be the bumped nodes for the insertion \((\T \xleftarrow{ \varepsilon} y)\). Assume \(\textup{sh}(\T \xleftarrow{\varepsilon} y) = \mu\). Let \(r_i^y = (b_i^y)_{\overline{0}}\) and \(c_i^y=(b_i^y)_{\overline{1}}\) for and \(1 \leq i \leq m\). For arbitrary \(v \in \Nodes\) we set 
\begin{align*}
l(v) &= \max\left[ \{ j \mid r_j^y = v_{\overline{0}},\; c_j^y < v_{\overline{1}}\} \cup \{0\} \right]\\
u(v) &= \max\left[ \{ j \mid c_j^y =v_{\overline{1}}, \;r_j^y < v_{\overline{0}}\} \cup \{0\} \right].
\end{align*}

Then \(b^y_{l(v)}\) is the nearest node directly to the left of \(v\) which was bumped in the \(y\) insertion. If no such element exists then \(l(v) = 0\).  Similarly, \(b^y_{u(v)}\) is the nearest node directly above \(v\) which was bumped in the \(y\) insertion. If no such element exists then \(u(v) = 0\). Let \([\mu]_A \) be the set of all nodes of \([\mu]\) together with all addable nodes of \([\mu]\).

Now define the sets 
\begin{align*}
\textup{NE}(\T, \varepsilon, y)&:=
\{v \in [\mu]_A \mid l(v) > u(v)\} \cup \{v \in [\mu]_A \mid l(v) = u(v) = 0, \varepsilon + \overline{y}= \overline 1\}\\
\textup{SW}(\T, \varepsilon, y)&:=
\{v \in [\mu]_A \mid l(v) < u(v)\} \cup \{v \in [\mu]_A \mid l(v) = u(v) = 0, \varepsilon + \overline{y}= \overline 0\}
\end{align*}

\begin{Remark}
Informally, \(\textup{NE}(\T, \varepsilon, y)\) represents the set of nodes `northeast' of a rough perimeter delineated by tracing the path of the bumped nodes in sequence, and \(\textup{SW}(\T, \varepsilon, y)\) represents the nodes to the `southwest' of that perimeter. For example, if \(\varepsilon + \overline{y} = \overline 1\), and the bumped nodes are those labeled in the diagram of \([\mu]\) below, then the red-colored nodes represent the set \(\textup{NE}(\T, \varepsilon, y)\), and the blue-colored nodes represent the set \(\textup{SW}(\T, \varepsilon, y)\).

\begin{align*}
\begin{tikzpicture}[scale=0.7]
\tikzset{baseline=-9mm}
\fill [ rounded corners=4pt, fill=blue!30] (0,-1)--(0,-12)--(1,-12)--(1,-11)--(3,-11)--(3,-10)--(6,-10)--(6,-9)--(8,-9)--(8,-8)--(10,-8)--(10,-6)--(11,-7)--(11,-1)--cycle;
\fill [ rounded corners=4pt, fill=red!60] (0,-1)--(13,-1)--(13,-2)--(12,-2)--(12,-5)--(11,-5)--(11,-7)--(9,-7)--(9,-8)--(8,-8)--(8,-7)--(9,-7)--(9,-5)--(10,-5)--(10,-3)--(8,-3)--(8,-6)--(7,-6)--(7,-7)--(6,-7)--(6,-9)--(3,-9)--(3,-10)--(2,-10)--(2,-8)--(3,-8)--(4,-8)--(4,-7)--(5,-7)--(5,-6)--(7,-6)--(7,-4)--(5,-4)--(5,-6)--(3,-6)--(3,-7)--(2,-7)--(2,-6)--(3,-6)--(3,-4)--(4,-4)--(4,-2)--(3,-2)--(3,-3)--(1,-3)--(1,-5)--(0,-5)--cycle;
\draw [ white, line width=0.8pt, rounded corners=4pt, fill=none] (0,-1)--(13,-1)--(13,-2)--(12,-2)--(12,-5)--(11,-5)--(11,-7)--(9,-7)--(9,-8)--(8,-8)--(8,-7)--(9,-7)--(9,-5)--(10,-5)--(10,-3)--(8,-3)--(8,-6)--(7,-6)--(7,-7)--(6,-7)--(6,-9)--(3,-9)--(3,-10)--(2,-10)--(2,-8)--(3,-8)--(4,-8)--(4,-7)--(5,-7)--(5,-6)--(7,-6)--(7,-4)--(5,-4)--(5,-6)--(3,-6)--(3,-7)--(2,-7)--(2,-6)--(3,-6)--(3,-4)--(4,-4)--(4,-2)--(3,-2)--(3,-3)--(1,-3)--(1,-5)--(0,-5)--cycle;
\draw[ thick ](0,-1)--(12,-1)--(12,-4)--(11,-4)--(11,-6)--(10,-6)--(10,-7)--(9,-7)--(9,-8)--(7,-8)--(7,-9)--(5,-9)--(5,-10)--(2,-10)--(2,-11)--(0,-11)--(0,-1);
\draw(0.5,-4.5) node{$\scriptstyle b_1^y$};
\draw(1.5,-2.5) node{$\scriptstyle b_2^y$};
\draw(2.5,-2.5) node{$\scriptstyle b_3^y$};
\draw(2.5,-3.5) node{$\scriptstyle b_4^y$};
\draw(3.5,-1.5) node{$\scriptstyle b_5^y$};
\draw(4.5,-1.5) node{$\scriptstyle b_6^y$};
\draw(3.5,-2.5) node{$\scriptstyle b_7^y$};
\draw(3.5,-3.5) node{$\scriptstyle b_8^y$};
\draw(2.5,-4.5) node{$\scriptstyle b_9^y$};
\draw(2.5,-5.5) node{$\scriptstyle b_{10}^y$};
\draw(1.5,-6.5) node{$\scriptstyle b_{11}^y$};
\draw(2.5,-6.5) node{$\scriptstyle b_{12}^y$};
\draw(3.5,-5.5) node{$\scriptstyle b_{13}^y$};
\draw(4.5,-5.5) node{$\scriptstyle b_{14}^y$};
\draw(5.5,-3.5) node{$\scriptstyle b_{15}^y$};
\draw(6.5,-3.5) node{$\scriptstyle b_{16}^y$};
\draw(6.5,-4.5) node{$\scriptstyle b_{17}^y$};
\draw(6.5,-5.5) node{$\scriptstyle b_{18}^y$};
\draw(4.5,-6.5) node{$\scriptstyle b_{19}^y$};
\draw(3.5,-7.5) node{$\scriptstyle b_{20}^y$};
\draw(1.5,-8.5) node{$\scriptstyle b_{21}^y$};
\draw(1.5,-9.5) node{$\scriptstyle b_{22}^y$};
\draw(2.5,-9.5) node{$\scriptstyle b_{23}^y$};
\draw(3.5,-8.5) node{$\scriptstyle b_{24}^y$};
\draw(4.5,-8.5) node{$\scriptstyle b_{25}^y$};
\draw(5.5,-8.5) node{$\scriptstyle b_{26}^y$};
\draw(6.5,-6.5) node{$\scriptstyle b_{27}^y$};
\draw(7.5,-5.5) node{$\scriptstyle b_{28}^y$};
\draw(8.5,-2.5) node{$\scriptstyle b_{29}^y$};
\draw(8.5,-3.5) node{$\scriptstyle b_{30}^y$};
\draw(9.5,-2.5) node{$\scriptstyle b_{31}^y$};
\draw(9.5,-3.5) node{$\scriptstyle b_{32}^y$};
\draw(9.5,-4.5) node{$\scriptstyle b_{33}^y$};
\draw(8.5,-5.5) node{$\scriptstyle b_{34}^y$};
\draw(8.5,-6.5) node{$\scriptstyle b_{35}^y$};
\draw(7.5,-7.5) node{$\scriptstyle b_{36}^y$};
\draw(8.5,-7.5) node{$\scriptstyle b_{37}^y$};
\draw(9.5,-6.5) node{$\scriptstyle b_{38}^y$};
\end{tikzpicture}
\\
\end{align*}
Though we will not need this fact, it follows from the definition that \(b_i^y \in \textup{NE}(\T, \varepsilon, y)\) if and only if \(\overline{(\T\xleftarrow{\varepsilon} y)(b^y_{i})} + \varepsilon=\overline 1\), as can be verified in the example above.
\end{Remark}

\begin{Theorem}\label{nodecontain}
Assume \(\varepsilon \in \ZZ_2\), \(y,z  \in \Xlet\), and  \(\T\) is a semistandard \((\Xlet,\lambda)\)-tableau. Let \(b_1^y, \ldots, b_{m_1}^y\) and \(b^z_1, \ldots, b^z_{m_2}\) be the bumped node sequences for the insertions \(\T_y:=(\T \xleftarrow{\varepsilon} y)\) and \(\T_z:=(\T_y \xleftarrow{\varepsilon} z)\) respectively. Then 
\begin{align*}
\{b^z_1, \ldots, b_{m_2}^z\} \subseteq \textup{NE}(\T, \varepsilon, y) 
\hspace{3mm}
\iff
\hspace{3mm}
\begin{cases}
y\prec z \textup{ and } \varepsilon = \overline{0}, \textup{ or}\\
y\succ  z \textup{ and } \varepsilon = \overline{1}, \textup{ or}\\
y = z \textup{ and } \overline{y} = \overline{0},
\end{cases}
\end{align*}
and
\begin{align*}
\{b^z_1, \ldots, b_{m_2}^z\} \subseteq \textup{SW}(\T, \varepsilon, y) 
\hspace{3mm}
\iff
\hspace{3mm}
\begin{cases}
y\succ z \textup{ and } \varepsilon = \overline{0}, \textup{ or}\\
y\prec  z \textup{ and } \varepsilon = \overline{1}, \textup{ or}\\
y = z \textup{ and } \overline{y} = \overline{1}.
\end{cases}
\end{align*}
\end{Theorem}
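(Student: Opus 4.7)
The plan is to reduce the theorem to the case of standard tableaux via Lemma \ref{standsame}, and then prove the standard case by induction using the structural lemmas developed in this section. I first observe that the $\textup{SW}$ biconditional follows from the $\textup{NE}$ biconditional by applying the dual-conjugate operation of Lemma \ref{dualconjsemi}: this operation transposes each bumped node, hence exchanges $\textup{NE}$ with $\textup{SW}$, and sends $(\varepsilon, y, z)$ to $(\varepsilon, y^*, z^*)$, under which the three $\textup{SW}$ hypotheses become the three $\textup{NE}$ hypotheses. Thus it suffices to establish the $\textup{NE}$ biconditional.

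For the reduction to standard tableaux, fix a $\bullet$-standardization $\T^\bullet$ of $\T$ and, in each case of the $\textup{NE}$ hypothesis, choose indices $i, j$ so that $y^\bullet := y^{(i)}$ and $z^\bullet := z^{(j)}$ satisfy the hypothesis of Lemma \ref{standsame} for the insertions into $\T^\bullet$ and into $(\T^\bullet \xleftarrow{\varepsilon} y^\bullet)$ respectively. A routine case analysis confirms that these indices can be chosen so that the order between $y^\bullet$ and $z^\bullet$ in $\Xlet^\bullet$ reflects the given hypothesis; in particular, the equality case $y = z$ is resolved into $y^\bullet \prec z^\bullet$ or $y^\bullet \succ z^\bullet$ according to the parity of $y$. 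Lemma \ref{standsame} then guarantees that the bumped node sequences for the standardized insertions agree with those of the original, so both the geometry of bumped nodes and the $\textup{NE}/\textup{SW}$ classification (which depends on bumped nodes and on the parity $\varepsilon + \overline{y}$) are preserved. Lemma \ref{dualconjinsertion}(ii) further reduces the standard $\varepsilon = \overline{1}$ case to the standard $\varepsilon = \overline{0}$ case.

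With the problem thus reduced, I would prove the $\textup{NE}$ statement for standard $\T$, distinct letters $y \neq z$ not in $\T$, and $\varepsilon = \overline{0}$ by induction on the length $k$ of the $z$-bumped node sequence. The base case $k = 1$ is a direct comparison of the initial insertion site of $z$ against $b_1^y$ using the algorithm. For the inductive step, assume $b_1^z, \ldots, b_k^z \in \textup{NE}$ and, seeking a contradiction, suppose $b_{k+1}^z \in \textup{SW}$. The $\textup{SW}$ condition at $b_{k+1}^z$ produces a $y$-bumped node directly above it with no $y$-bumped node to its immediate left; combining this with the $\textup{NE}$ status of $b_k^z$ and applying Lemma \ref{filler} to fill in intervening rows or columns of the $y$-path, one locates a stair triple of $y$-bumped nodes. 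Lemma \ref{triple} then yields a further $y$-bumped node whose position is incompatible with either the semistandardness of $\T_y$ or the non-crossing property of the $z$-path from Lemma \ref{inverses}, giving the desired contradiction. The converse direction of the biconditional follows because the three $\textup{NE}$ hypotheses and three $\textup{SW}$ hypotheses together exhaust all configurations of $(y, z, \varepsilon)$.

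The main obstacle lies in the inductive step: the precise configuration of the $z$-bumped sequence --- whether successive $z$-bumps move downward (for even bumped letters) or rightward (for odd) --- depends on the parities of letters along the bumped sequence, and the correct stair triple of $y$-bumped nodes to feed into Lemma \ref{triple} depends on these parities. Disciplined case analysis is required to ensure that the conclusion produced by Lemma \ref{triple} genuinely contradicts the assumed $\textup{NE}$ geometry in every case, but no new structural ingredients beyond those developed in this section should be necessary.
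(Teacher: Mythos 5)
Your plan follows the same essential route as the paper: reduce to the standard-tableau case via $\bullet$-standardization and Lemma \ref{standsame}, then run an induction along the $z$-bumped node sequence using Lemmas \ref{filler} and \ref{triple}. The organizational differences are cosmetic rather than conceptual: you standardize first and dualize $\varepsilon$ via Lemma \ref{dualconjinsertion}(ii), whereas the paper fixes $\overline{y}=\overline{0}$ and standardizes last; you argue the inductive step by contradiction, the paper constructs the required $y$-bumped nodes directly. Both approaches rest on the same two key lemmas.

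Two cautions on execution. First, the justification for the converse direction has a circular flavor as stated: you announce that the $\textup{SW}$ biconditional follows from the $\textup{NE}$ biconditional by dual-conjugate, but then invoke exhaustiveness of the six hypothesis cases to close the backward direction of the $\textup{NE}$ biconditional --- which requires the forward direction of the $\textup{SW}$ claim as an input. The fix is to transfer only the forward implications: prove $\textup{NE}$-forward by induction, obtain $\textup{SW}$-forward by dual-conjugate, and then use disjointness of $\textup{NE}$ and $\textup{SW}$ together with exhaustiveness to get both biconditionals. (The paper sidesteps this by proving the stronger per-index statement $b_i^z \in \textup{NE} \iff [\text{hypotheses}]$ for each $i$, handling the backward direction of the inductive step by passing to the conjugate tableau $\T'$.) Second, your inductive step sketch assumes the $\textup{SW}$ condition at $b_{k+1}^z$ produces a $y$-bumped node directly above with none directly to the left, i.e.\ $u(b_{k+1}^z) > l(b_{k+1}^z)$ with $u > 0$; but $\textup{SW}$ also includes the case $l=u=0$ with $\varepsilon + \overline{y} = \overline{0}$, which the paper must handle by a separate argument (there it proves that the $\textup{NE}$ case $l_i = u_i = 0$, $\varepsilon = \overline{1}$ propagates non-trivially). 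If you retain the proof-by-contradiction format, you will need a parallel special case. The remaining details --- the parity case analysis using $\varepsilon + \overline{\T_y(b_i^z)}$ and the placement of the stair triples fed into Lemma \ref{triple} --- are indeed delicate, and your acknowledgment that disciplined case analysis is required is accurate; the paper's version of that analysis is substantial.
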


\begin{proof} 
Since \(\textup{NW}(\T, \varepsilon, y) \sqcup \textup{SE}(\T, \varepsilon,y) = [\mu]_A\), we may prove the equivalent statement:
\begin{align}\label{nodecontaineq}
b_i^z \in \textup{NE}(\T, \varepsilon, y) 
\hspace{3mm}
\iff
\hspace{3mm}
\begin{cases}
y\prec z \textup{ and } \varepsilon = \overline{0}, \textup{ or}\\
y\succ  z \textup{ and } \varepsilon = \overline{1}, \textup{ or}\\
y = z \textup{ and } \overline{y} = \overline{0},
\end{cases}
\end{align}
for all \(1 \leq i \leq m_2\). First we prove that the lemma holds when \(\T\) is a {\em standard} tableau, \(y,z \notin \T\), \(y \neq z\), and \(\overline y = \overline 0\). Note that in this situation we need not consider the third case in the right side of (\ref{nodecontaineq}). We will go by induction on \(1 \leq i \leq m_2\). We will write \(\textup{NE}\) for \(\textup{NE}(\T, \varepsilon, y)\) where the context is clear.

\underline{Base case \(i=1\).}
Assume \(y \prec z\). Since \(\overline{y} = \overline{0}\), we have \(y < z\) and \(\overline{z} = \overline{0}\). If \(\varepsilon = \overline{0}\), then \(b_1^y\) is in the first row, and \(\T_y(b_1^y)=y < z\), so \(b_1^y \Rightarrow b_1^z\). Then \(l_1 >0 = u_1\), so \(b_1^z \in \textup{NE}\). If \(\varepsilon = \overline{1}\), then \(b_1^y\) is in the first column, and \(b_1^z \UParrow b_1^y\) so \(l_1 = 0 < u_1\), so \(b_i^z \notin \textup{NE}\).

Now assume \(y\succ   z\) and \(\varepsilon = \overline{0}\). Then \(b_1^y\) is in the first row. If \(\overline{z} = \overline{0}\), then \(z < y\), so \(b_1^z \Rightarrow b_1^y\). Then \(l_1 = 0\), so \(b_1^z \notin \textup{NE}\). If \(\overline{z} = \overline{1}\), then \(b_1^z\) is in the first column, so \(l_1 = 0\), and again \(b_1^z \notin \textup{NE}\). 

Now assume \(y\succ  z\) and \(\varepsilon = \overline{1}\). Then \(b_1^y\) is in the first column. If \(\overline{z} = \overline{0}\), then \(z < y\), so \(b_1^y \UParrow b_1^z\). Then \(u_1 = 0\), so \(b_1^z \in \textup{NE}\). If \(\overline{z} = \overline{1}\), then \(b_1^z\) is in the first row, so again \(u_1 = 0\) and \(b_1^z \in \textup{NE}\).

\underline{Induction step.} So (\ref{nodecontaineq}) holds when \(i=1\). Now we show that
\begin{align*}
b_i^z \in \textup{NE} \hspace{5mm} \iff \hspace{5mm} b_{i+1}^z \in \textup{NE}.
\end{align*}
\(( \implies)\) Assume \(b_i^z \in \textup{NE}\). Then \(l_i > u_i\), or \(l_i = u_i = 0\) and \(\varepsilon=\overline 1\). We assume the former, and will later address the latter case. If \(b_k^y\neq b_{u_i}^y\) is a node such that \(b_k^y \searrow b_{u_i}^y\), then \(k<u_i\). By Lemma \ref{triple}, if \(b_k^y \) is a node such that \(b^y_{l_i}\NEarrow b_k^y \NEarrow b_{u_i}^y\), then \(k<l_i\). From this it follows that \(\varepsilon + \T(b_{l_i}^y) = \overline{0}\), so \((b^y_{l_i + 1})_{\overline{0}} = (b^y_{l_i})_{\overline{0}} + 1\). There are two cases to consider:

\begin{enumerate}
\item[(a)] Assume \(\varepsilon  + \overline{\T_y(b_i^z)} = \overline 0\). Then \((b_{i+1}^z)_{\overline 0} = (b_i^z)_{\overline 0}+1\). First we show that \(b_i^z \neq b_r^y\) for any \(r\). Indeed, if we did have that \(b_i^z = b^y_r\), then by Lemma \ref{filler} applied to \((b_{l_i}^y, b_{r}^y)\), there is some \(b_s^y\) in the column to the left of \(b_r^y\), with \(l_i \leq s < i\) and \(\varepsilon + \overline{ \T(b_s^y)} =\overline{1}\). Then by the above paragraph, \(b^y_s \nearrow b^y_i\). But then \(b_{s+1}^y\) is in the same column as \(b_i^y\), but cannot be above or below \(b_i^y\) since \(u_i<s< s+1 \leq i\). Then the only option is \(s+1 = r\). But this cannot be, since by assumption 
\begin{align*}
\varepsilon +\overline{\T(b_{s}^y)} = \varepsilon +\overline{\T(b_{r-1}^y)} = \varepsilon  + \overline{\T_y(b_r^y)} = \varepsilon + \overline{\T_y(b_i^z)} = \overline 0,
\end{align*}
a contradiction. So \(b_i^z \neq b_r^y\), thus \(\T_y(b_i^z) =\T(b_i^z)\) and \(u_{i+1} \leq l_i\).

 We have \((b^y_{l_i + 1})_{\overline{0}} = (b^y_{l_i})_{\overline{0}} + 1\), so \((b^y_{l_i +1})_{\overline 0} = (b_{i+1}^z)_{\overline 0}\). Moreover, since \(\T_y(b^y_{l_i +1}) = \T(b_{l_i}^y) < \T(b_i^z) = \T_y(b_i^z) = x_{i+1}^z\), we have that \(b_{l_i +1}^y \Rightarrow b_{i+1}^z\), so \(l_{i+1} \geq l_{i}+1>l_i \geq u_{i+1}\). Thus \(b_{i+1}^z  \in\textup{NE}\).

\item[(b)] Assume \(\varepsilon + \overline{\T_y(b_i^z)}=\overline 1\). Note \(\T_y(b_i^z) = \T_z(b_{i+1}^z)\). We have \((b_{i+1}^z)_{\overline 1} = (b_i^z)_{\overline 1}+1\). First we prove that \(u_{i+1} < l_i\). Assume this is not the case. Then by Lemma \ref{filler}, there exists a sequence \(t_0, \ldots, t_k\), where \(k=(b_{u_{i+1}}^y)_{\overline 1} - (b_{l_i}^y)_{\overline 1}-1\), such that \(l_i \leq t_0 < \cdots < t_k < u_{i+1}\), \((b_{t_j}^y)_{\overline 1} =  (b_{l_i}^y)_{\overline 1} + j\) and \( \varepsilon + \overline{\T(b_{t_j}^y)}=\overline 1\) for all \(j\). Then \(b_{t_k}^y\) is in the same column as \(b_i^z\), and \(u_i < l_i \leq t_k\), so we must have \(b_i^z \downarrow b_{t_k}^y\). Then \(x_{i+1}^z = \T_y(b^z_i) \leq \T_y(b_{t_k}^y) < \T_y(b_{u_{i+1}}^y)\), a contradiction of the \(\varepsilon\)-insertion algorithm. Thus \(u_{i+1} < l_i\).
 
If \(u_{i+1} =0\), then \(b_{i+1}^z \in \textup{NE}\) unless \(l_{i+1} = 0\) and \(\varepsilon=\overline 0\). We rule out this case by way of contradiction. Let \(u_{i+1}= l_{i+1}=0\) and \(\varepsilon=\overline 0\). Then by Lemma \ref{triple}, there is no \(m\) such that \(b_m^y \searrow b_{i+1}^z\). Then \(b_{i+1}^z \nearrow b_1^y\). Thus there is a sequence \(t_0, \ldots, t_k\), where \(k=(b_{l_{i}}^y)_{\overline 0} - 2\) such that \(1\leq t_0 < \cdots < t_k < l_i\), \((b_{t_j}^y)_{\overline 0} = 1 + j\) and \(\varepsilon+\overline{\T(b_{t_j}^y)}=\overline0\) for all \(j\). Writing \(t_{k+1}:=l_i\), there is some \(t_a\) such that \(b_{t_a}^y\) is in the same row as \(b_{i+1}^z\). But then either \(b_{i+1}^z = b_{t_a}^y\) (in which case \(x_{i+1}^z < \T_y(b_{t_a}^y)\)), or \(b_{i+1}^z \Rightarrow b_{t_a}^y\) (in which case \(x_{i+1}^z < \T_y(b_{i+1}^z) < \T_y(b_{t_a}^y)\)). But we also have \(x^z_{i+1} = \T_y(b_i^z) > \T_y(b_{l_i}^y) \geq \T_y(b_{t_a}^y)\), a contradiction of the \(\varepsilon\)-insertion algorithm.

So assume \(u_{i+1}>0\). Then by Lemma \ref{filler} there exists a sequence \(t_0, \ldots, t_k\), where \(k = (b_{l_i}^y)_{\overline 0} - (b_{u_{i+1}})_{\overline 0}-1\), such that \(u_{i+1} \leq t_0 < \cdots < t_k < l_i\), \((b_{t_j}^y)_{\overline0} = (b_{u_{i+1}})_{\overline 0}+j\) and \(\varepsilon + \overline{\T(b_{t_j}^y)}=\overline0\) for all \(j\). If \(b_i^z \Rightarrow b_{i+1}^z\), then \(l_{i+1} \geq l_i\). Otherwise there is some \(0 \leq a \leq k\) such that \(b_{t_a}^y\) is in the same row as \(b_{i+1}^z\). But since \(\T_y(b_{i+1}^z) > \T_y(b_{i}^z) > \T_y(b_{l_i}^y) > \T_y(b_{t_a}^y)\), it must be that \(b_{t_a}^y \Rightarrow b_{i+1}^z\). Then \(l_{i+1} \geq t_a \geq u_{i+1}\), and thus \(l_{i+1} > u_{i+1}\), so \(b_{i+1}^z \in \textup{NE}\).  
\end{enumerate}

Now assume that \(u_i = l_i =0\) and \(\varepsilon = \overline{1}\).  We will show that \(u_{i+1} = 0\). By way of contradiction assume \(u_{i+1}>0\). There are two cases to consider:
\begin{enumerate}
\item[(a)] Assume \(\overline{\T_y(b_i^z)}= \overline0\). Then, since \(b_1^y\) is in the first column, by Lemma \ref{filler} there exists a sequence \(t_0, \ldots, t_k\), where \(k=(b_{u_{i+1}}^y)_{\overline 1} -2\), such that \(1 \leq t_0 < \cdots < t_k < u_{i+1}\), \((b_{t_j}^y)_{\overline 1} =  1 + j\) and \(\varepsilon + \overline{\T(b_{t_j}^y)} = \overline 1\) for all \(j\). Then \(b_i^z \downarrow b_{t_k}^y\). Then \(x^z_{i+1}=\T_y(b_i^z) \leq \T_y(b_{t_k}^y)< \T_y(b_{u_{i+1}}^y)\), so by \(\overline{1}\)-insertion, \(b_{i+1}^z \downarrow b_{u_{i+1}}^y\), a contradiction.
\item[(b)] Assume \(\overline{\T_y(b_i^z)} = \overline 1\). Then, applying Lemma \ref{filler}, we have \(u_{i+1}=0\) unless \(b_i^z \Downarrow b_{i+1}^z\) and \(b_i^z = b_r^y\) for some \(r\). Then \(\T(b_{r-1}^y) = \T_y(b_r^y)\), and \(\overline{\T_y(b_r^y)} = \overline 1\), so \(b_{r-1}^y\) is in the row above \(b_r^y\), and \(b_{r}^y\NEarrow b_{r-1}^y\). Then, since \(b_1^y\) is in the first column, by Lemma \ref{filler} there exists a sequence \(t_0, \ldots, t_k\), where \(k=(b_{r-1}^y)_{\overline1} -2\), such that \(1 \leq t_0 < \cdots < t_k < r-1\), \((b_{t_j}^y)_{\overline 1} =  1 + j\) and \(\overline{\T(b_{t_j}^y)}=\overline0\) for all \(j\). Then there exist some \(t_j\) such that \(b_{t_j}^y\) is in the same column as \(b_r^y\). Then, since \(t_j<r\), we have \(b_r^y \Uparrow b_{t_j}^y\), a contradiction, since \(u_i = 0\). \end{enumerate}
This completes the proof that \(b_{i+1}^z \in \textup{NE}\) if \(b_i^z \in \textup{NE}\).

\((\impliedby)\) Now assume \(b_i^z \notin \textup{NE}(\T, \varepsilon, y)\). Let \(c_1^y, \ldots, c_{m_1}^y\) and \(c_1^z, \ldots, c_{m_2}^z\) be the bumped node sequences for the insertions \((\T' \xleftarrow{\varepsilon+\overline{1}} y)\) and  \(((\T' \xleftarrow{\varepsilon + \overline 1} y)\xleftarrow{\varepsilon + \overline 1} z)\), respectively. Then by Lemma \ref{dualconj}, \(c_j^y = (b_j^y)'\) and \(c_j^z = (b_j^z)'\) for all \(j\). But then \(u(b_j^z) = l(c_j^z)\) and \(l(b_j^z) = u(c_j^z)\) for all \(j\), so \(c_i^z \in \textup{NE}(\T', \varepsilon + \overline 1, y)\). Then, applying the `only if' direction of the claim proved above, we have \(c_{i+1}^z \in \textup{NE}(\T', \varepsilon + \overline 1, y)\). Then 
\begin{align*}
u(b_j^z) = l(c_j^z) > u(c_j^z) = l(b_j^z), \hspace{5mm} \textup{or} \hspace{5mm}
u(b_j^z) = l(c_j^z) = 0 = u(c_j^z) = l(b_j^z) \textup{ and } \varepsilon  + \overline{y} = \overline 0,
\end{align*}
so \(b_{i+1}^z \notin \textup{NE}(\T, \varepsilon, y)\), as required.

This completes the proof of the lemma when \(\T\) is a standard tableau, \(y,z \notin \T\), \(y \neq z\), and \(\overline y = \overline 0\). Now we maintain the above assumptions but consider the case \(\overline y = \overline 1\). Let \(c_1^{y^*}, \ldots, c_{m_1}^{y^*}\) and \(c_1^{z^*}, \ldots, c_{m_2}^{z^*}\) be the bumped node sequences for the insertions \((\T^* \xleftarrow{\varepsilon +\overline 1} y^*)\) and  \(((\T^* \xleftarrow{\varepsilon + \overline 1 } y^*)\xleftarrow{\varepsilon + \overline 1} z^*)\), respectively. Then \(b_i^y = c_i^{y^*}\) and \(b_i^z=c_i^{z^*}\) for all \(i\), so \(u(b_i^z) = u(c_i^{z^*})\) and \(l(b_i^z) = l(c_i^{z^*})\) for all \(i\) by Lemma \ref{dualconj}. Then, since \(\overline{y^*} = \overline{0}\), we have that, for all \(i\),
\begin{align*}
b_i^z \in \textup{NE}(\T, \varepsilon, y)
\hspace{3mm}
&\iff
\hspace{3mm}
c_i^{z^*} \in \textup{NE}(\T^*, \varepsilon + \overline{1}, y^*) \hspace{3mm}\\
&\iff
\hspace{3mm}
\begin{cases}
y^*\prec z^* \textup{ and } \varepsilon + \overline{1} = \overline{0}, \textup{ or}\\
y^*\succ  z^* \textup{ and } \varepsilon + \overline{1} = \overline{1}\\
\end{cases}\\
&\iff
\hspace{3mm}
\begin{cases}
y\succ z \textup{ and } \varepsilon = \overline{1}, \textup{ or}\\
y\prec  z \textup{ and } \varepsilon =\overline{0}.
\end{cases}
\end{align*}
This completes the proof of the lemma when \(\T\) is a standard tableau, \(y,z \notin \T\), \(y \neq z\).

Now, let \(\T\) be an arbitrary {\em semistandard} tableau, with arbitrary \(y, z \in \Xlet\). We may choose elements \(z^\bullet, y^\bullet \in \Xlet^\bullet\), and a \(\bullet\)-standardization \(\T^\bullet\) of \(\T\), such that 
\begin{enumerate}
\item \(\hat\bullet(z^\bullet) = z\) 
\item \(\hat\bullet(y^\bullet)=y\)
\item For all \(x \in \T^\bullet\) such that \(y= \hat\bullet(x)\), we have:
\begin{enumerate}
\item  \(x\prec y^\bullet\) if \(\varepsilon + \overline{y} = \overline 0\)
\item \(x\succ y^\bullet\) if \(\varepsilon + \overline{y} = \overline 1\)
\end{enumerate}
\item For all \(x \in \T^\bullet\) such that \(z= \hat\bullet(x)\), we have:
\begin{enumerate}
\item  \(x\prec z^\bullet\) if \(\varepsilon + \overline{z} = \overline 0\)
\item \(x\succ z^\bullet\) if \(\varepsilon + \overline{z} = \overline 1\)
\end{enumerate}
\item If \(z=y\), we have: 
\begin{enumerate}
\item  \(y^\bullet\prec z^\bullet\) if \(\varepsilon + \overline{z} = \overline 0\)
\item \(y^\bullet \succ z^\bullet\) if \(\varepsilon + \overline{z} = \overline 1\).
\end{enumerate}
\end{enumerate}

Then by this choice we have
\begin{align*}
b_i^z \in \textup{NE}(\T, \varepsilon, y)
\hspace{3mm}
&\iff
\hspace{3mm}
b_i^{z^\bullet} \in \textup{NE}(\T^\bullet, \varepsilon, y^\bullet)\\
& \iff 
\hspace{3mm}
\begin{cases}
y^\bullet \prec z^\bullet \textup{ and } \varepsilon = \overline{0}, \textup{ or}\\
y^\bullet \succ z^\bullet \textup{ and } \varepsilon = \overline{1}
\end{cases}\\
& \iff
\hspace{3mm}
 \begin{cases}
y \prec z \textup{ and } \varepsilon = \overline{0}, \textup{ or}\\
y\succ z \textup{ and } \varepsilon = \overline{1}, \textup{ or}\\
y=z \textup{ and } \overline{y} = \overline{0},
\end{cases}
\end{align*}
by application of Lemma \ref{standsame}. 
\end{proof}

\begin{Corollary}\label{addednodes}
Assume \(\varepsilon \in \ZZ_2\), \(y,z  \in \Xlet\), and  \(\T\) is a semistandard \((\Xlet,\lambda)\)-tableau. Then
\begin{align*}
A(\T, \varepsilon, y) \nearrow A((\T \xleftarrow{ \varepsilon } y),\varepsilon, z) \hspace{3mm} \iff \hspace{3mm} 
\begin{cases}
y\prec z \textup{ and } \varepsilon = \overline{0}, \textup{ or}\\
y\succ  z \textup{ and } \varepsilon = \overline{1}, \textup{ or}\\
y = z \textup{ and } \overline{y} = \overline{0}.
\end{cases}
\end{align*}
\end{Corollary}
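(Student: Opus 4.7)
The plan is to deduce this corollary from Theorem \ref{nodecontain} after reducing it to a purely geometric statement about the position of the added node $b_{m_1}^y := A(\T, \varepsilon, y)$. Observe first that the three cases of the RHS and the three cases of its negation (coming from the SW half of Theorem \ref{nodecontain}) together exhaust all possibilities for $(y, z, \varepsilon)$, so exactly one of the containments $\{b_1^z, \ldots, b_{m_2}^z\} \subseteq \textup{NE}(\T, \varepsilon, y)$ or $\{b_1^z, \ldots, b_{m_2}^z\} \subseteq \textup{SW}(\T, \varepsilon, y)$ holds. Since $b_{m_2}^z := A((\T \xleftarrow{\varepsilon} y), \varepsilon, z)$ is one of the bumped nodes of the $z$-insertion, the RHS of the corollary is thus equivalent to the single condition $b_{m_2}^z \in \textup{NE}(\T, \varepsilon, y)$.

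It therefore suffices to prove the following purely geometric statement: for every addable node $v$ of $\mu := \textup{sh}(\T \xleftarrow{\varepsilon} y)$,
\[ v \in \textup{NE}(\T, \varepsilon, y) \iff b_{m_1}^y \nearrow v. \]
Writing $(r_0, c_0) := b_{m_1}^y$, I would argue by case analysis on the position of $v$ relative to $(r_0, c_0)$. The easy cases are when $v$ shares a row or column with $b_{m_1}^y$: since $v$ is addable to $\mu$ while $b_{m_1}^y \in [\mu]$, the same-row case forces $v = (r_0, c_0 + 1)$, which gives $l(v) = m_1 > u(v)$ and hence $v \in \textup{NE}$; the same-column case forces $v = (r, c_0)$ with $r > r_0$, which gives $u(v) = m_1 > l(v)$ and hence $v \in \textup{SW}$. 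For $v$ in a different row and column from $b_{m_1}^y$, I would use Lemma \ref{filler} to propagate bumped-node positions along the $y$-path between $(r_0, c_0)$ and $v$, invoke Lemma \ref{triple} to control stair-triple configurations, and apply Lemma \ref{inverses}(iv) to rule out problematic orderings; together these should show that $\{v \in [\mu]_A : b_{m_1}^y \nearrow v\}$ coincides with $\textup{NE}(\T, \varepsilon, y) \cap [\mu]_A$.

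The main obstacle is the boundary case where $v$ is far from the bumped path, so that $l(v) = u(v) = 0$ and the NE/SW classification depends on the parity of $\varepsilon + \overline{y}$. Here I would exploit the fact that the initial bumped node $b_1^y$ lies in the first row of $[\lambda]$ if $\varepsilon + \overline{y} = \overline{0}$ and in the first column if $\varepsilon + \overline{y} = \overline{1}$; tracing along the bumped path via Lemma \ref{filler} then pins down the reachable positions for $b_{m_1}^y$ tightly enough to verify that the parity-dependent boundary clauses in the definitions of $\textup{NE}$ and $\textup{SW}$ match precisely the $\nearrow$-status of $v$ relative to $b_{m_1}^y$. Once the geometric statement is in hand, applying it with $v = b_{m_2}^z$ completes the proof.
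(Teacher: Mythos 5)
Your high-level plan matches the paper's: reduce via Theorem~\ref{nodecontain} to the single condition $b_{m_2}^z\in\textup{NE}(\T,\varepsilon,y)$, then relate this NE/SW classification of the added node to its $\nearrow$-position relative to $b_{m_1}^y = A(\T,\varepsilon,y)$. The reduction step is argued correctly, and your observation that the NE and SW criteria are mutually exclusive and exhaustive over all $(y,z,\varepsilon)$ is the right way to see that the full $\iff$ follows from membership of $b_{m_2}^z$ alone. The same-row and same-column cases you handle explicitly are fine.

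The issue is that the heart of the argument---the case where $b_{m_2}^z$ shares neither row nor column with $b_{m_1}^y$---is only gestured at. The paper's proof of precisely this case is short: it assumes by contradiction that $b_{m_2}^z\in\textup{NE}$ yet $b_{m_2}^z\nearrow b_{m_1}^y$, applies Lemma~\ref{filler} (with $\delta=\overline 1$, from $l_{m_2}$ to $m_1$) to locate a bumped node of the $y$-insertion directly above $b_{m_2}^z$ with index exceeding $l_{m_2}$, forcing $u_{m_2}>l_{m_2}$, contradicting the NE assumption. The boundary subcase $l_{m_2}=0$ is handled the same way starting from $b_1^y$ in the first column. Neither Lemma~\ref{triple} nor Lemma~\ref{inverses}(iv) is invoked in the corollary's proof, so listing them suggests you have not actually traced through the mechanism. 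You also omit how the converse direction closes: the paper does not reprove the SW case from scratch, it transfers the NE argument by conjugation (Lemma~\ref{dualconj}), a step your sketch would still need. So the proposal is correct in outline and easy cases, but the general-position argument and the converse direction are genuine gaps that a reader could not fill from what you wrote.
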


\begin{proof}
Let \(b_1^y, \ldots, b^y_{m_1}\), \(b_1^z, \ldots, b_{m_2}^z\) be as in Lemma \ref{nodecontain}. By that lemma, \(b_{m_2}^z \in \textup{NE}(\T, \varepsilon, y)\) if and only if the right side holds.

\((\impliedby)\) Assume by way of contradiction that \(b_{m_2}^z \in \textup{NE}\) and \(b_{m_2}^z \nearrow b_{m_1}^y\). Then \(b_{m_1}^y\) cannot be in the same column as \(b_{m_2}^z\), else \(u_{m_2} > l_{m_2}\).

First assume \(l_{m_2}>0\). Then by Lemma \ref{filler}, there exists a sequence \(t_0, \ldots, t_k\), where \(k=(b_{m_1}^y)_{\overline 1} - (b_{l_{m_2}}^y)_{\overline 1}-1\), such that \(l_{m_2} \leq t_0 < \cdots < t_k < m_1\), \((b_{t_j}^y)_{\overline 1} =  (b_{l_{m_2}}^y)_{\overline 1} + j\) and \(\varepsilon + \overline{\T(b_{t_j}^y)} = \overline 1\) for all \(j\). Then there is some \(t_j\) such that \(b_{t_j}^y\) is in the same column as \(b_{m_2}^z\). Moreover, we have \(b_{m_2}^z \Uparrow b_{t_j}^y\), hence \(u_{m_2} \geq t_j > l_{m_2}\), a contradiction.

Now assume \(l_{m_2}=0\). Then \(u_{m_2}=0\) and \(\varepsilon + \overline{y}=\overline 1\). Then \(b_1^y\) is in the first column, and by Lemma \ref{filler}, there exists a sequence \(t_0, \ldots, t_k\), where \(k=(b_{m_1}^y)_{\overline 1} - 2\), such that \(1 \leq t_0 < \cdots < t_k < m_1\), \((b_{t_j}^y)_{\overline 1} =  1 + j\) and \(\varepsilon + \overline{\T(b_{t_j}^y)} = \overline 1\) for all \(j\).  Then there is some \(t_j\) such that \(b_{t_j}^y\) is in the same column as \(b_{m_2}^z\). Moreover, we have \(b_{m_2}^z \Uparrow b_{t_j}^y\), hence \(u_{m_2} \geq t_j > 0\), a contradiction.

\((\implies)\) Applying the `if' statement proved above to the conjugate situation (as in the proof of claim Lemma \ref{nodecontain}), we have that \(b_{m_2} \notin \textup{NE}\) implies that \(b_{m_2}^z \nearrow b_{m_1}^y\), completing the proof.
\end{proof}

\section{Super RSK correspondence}\label{SRSKsec}

\subsection{Biwords}
Given alphabets \(\Xlet\) and \(\Ylet\), we call an element of \(\Xlet \times \Ylet\) an {\em \((\Xlet,\Ylet)\)-biletter}. We call a biletter \((x,y)\) {\em mixed} if \(\overline{x} + \overline{y} = \overline{1}\). We define a total order \(\triangleleft\) on \((\Xlet,\Ylet)\)-biletters by setting  
\((x_1,y_1) \triangleleft (x_2,y_2)\) if
\begin{align*}
y_1 <_\Ylet y_2,  \hspace{10mm} \textup{or} \hspace{10mm}
y_1 = y_2,\; x_1 \prec_{\Xlet} x_2.
\end{align*}

For \(k \in \ZZ_{>0}\), we call an element \(\bw = ((x_1,y_1), \ldots, (x_k,y_k)) \in (\Xlet \times \Ylet)^k\) an {\em \((\Xlet,\Ylet)\)-biword of length \(k\)}. We say that \(\bw\) is {\em restricted} if it is multiplicity free with respect to mixed biletters; i.e. \((x_i,y_i) = (x_j, y_j)\) for \(i \neq j\) only if \(\overline{x}_i + \overline{y}_i = \overline{0}\). We say that \(\bw\) is {\em ordered} if \((x_i, y_i) \trianglelefteq (x_j,y_j)\) for all \(i \leq j\). The {\em left content} \(\textup{lcon}(\bw)\) of \(\bw\) is the multiset \(\{x_1, \ldots, x_k\}\) and the {\em right content} \(\textup{rcon}(\bw)\) of \(\bw\) is the multiset \(\{y_1, \ldots, y_k\}\).

If \(L\) is a multiset of elements of \(\Xlet\) and \(R\) is a multiset of elements of \(\Ylet\), with \(|L| = |R| = k\), we say \((L,R)\) is an \((\Xlet,\Ylet)\){\em -content pair of length \(k\)}. For an \((\Xlet, \Ylet)\)-content pair, define \(\textup{RBiw}(L,R)\) to be the set of restricted \((\Xlet, \Ylet)\)-biwords \(\bw\) with \(\textup{lcon}(\bw)=L\) and \(\textup{rcon}(\bw)=R\). Let \(\textup{RBiw}(L,R)^{\trianglelefteq} =\{ \bw \in \textup{RBiw}(L,R) \mid \bw \textup{ is ordered}\}\).  Finally, define \(\textup{Tab}(L,R)\) to be the set of pairs \((\L, \R)\) of tableaux such that \(\textup{sh}(\L)=\textup{sh}(\R)\), \(\textup{con}(\L)=L\) and \(\textup{con}(\R)=R\). Let \(\textup{SStd}(L,R) \subseteq \textup{Tab}(L,R)\) be the subset of {\em semistandard} tableau pairs.

\subsection{Super RSK algorithm}\label{RSKsec} Let \((L,R)\) be an \((\Xlet,\Ylet)\)-content pair of length \(k\).  Let \(\bw = ((x_1, y_1), \ldots, (x_k,y_k)) \in \textup{RBiw}(L,R)^{\trianglelefteq}\). We define \(\T_{\bw}^0:= \varnothing\), then for \(1 \leq i \leq k\) we inductively define \(\T_{\bw}^{i}:=(\T_{\bw}^{i-1} \xleftarrow{\overline{y}_i}x_i)\), and define \(a^{i}_{\bw}\) to be the added node of this insertion. We say \(\T_{\bw}:= \T_{\bw}^k\) is the {\em insertion tableau of \(\bw\)}. The {\em recording tableau of \(\bw\)} is the \((\Ylet,\textup{sh}(\T_{\bw}))\)-tableau \(\T^{\bw}\) defined by \(\T^{\bw}(a^i_{\bw}):=y_i\). We then define
\begin{align*}
 \textup{sRSK}(\bw) := (\T_\bw, \T^\bw).
 \end{align*}

\begin{Example}\label{RSKex}
Let \(\Xlet \) be as in Example \ref{firstex}, and take \(\Ylet = \Xlet\). Let 
\begin{align*}
L = \{\hat 1, 1, \hat 2, 2, \hat 3, \hat 3, \hat 3, 3\} \hspace{5mm}\textup{ and }\hspace{5mm}R = \{\hat 1, \hat 2, 2, 2, \hat 3, \hat 3, 3, 3\}
\end{align*}
 be multisets of letters. Then \((L,R)\) is an \((\Xlet, \Ylet)\)-content pair of length 8. Let \(\bw\) be the biword
\begin{align*}
\bw = ((\hat 3, \hat 1), (1, \hat 2), (2,2), (3,2), (\hat 3, \hat 3), (\hat 3, \hat 3), (\hat 2, 3), (\hat 1, 3)).
\end{align*}
Then \(\bw \in \textup{RBiw}(L,R)^{\trianglelefteq}\), and \(\textup{sRSK}(\bw)\) yields the tableaux:
\begin{align*}
\T_\bw=\ytableausetup{centertableaux}
\begin{ytableau}
 \hat 1 & \hat 2 & \hat 3 & 3 \\
1 & 2 & \hat 3 \\
\hat 3
\end{ytableau}
\hspace{10mm}
\T^\bw=\ytableausetup{centertableaux}
\begin{ytableau}
 \hat{1} & 2 & 2 & \hat{3} \\
 \hat{2} & 3 & 3\\
 \hat{3} 
\end{ytableau}
\end{align*}
\end{Example}

\begin{Theorem}[Super RSK correspondence]\label{SuperRSK}
Let \((L,R)\) be an \((\Xlet, \Ylet)\)-content pair. The map \(\textup{sRSK}:\bw \mapsto (\T_\bw, \T^\bw)\) defines a bijection \(\textup{RBiw}(L,R)^{\trianglelefteq} \to \textup{SStd}(L,R)\).
\end{Theorem}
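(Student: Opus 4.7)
The plan is to argue by induction on the biword length $k$, constructing the inverse of $\textup{sRSK}$ explicitly. The base case $k=0$ is vacuous. For the inductive step I would first verify that $\textup{sRSK}(\bw) \in \textup{SStd}(L,R)$: the content equalities $\textup{con}(\T_\bw) = L$, $\textup{con}(\T^\bw) = R$, and the shape equality $\textup{sh}(\T_\bw) = \textup{sh}(\T^\bw)$ are immediate from the construction, and $\T_\bw$ is semistandard by iterating Lemma \ref{semisemi}. The substantive content is the semistandardness of $\T^\bw$.

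Non-decreasingness of $\T^\bw$ is automatic: Young diagrams are closed under the node partial order $\leq$, so if $u \leq v$ with $u = a^i_\bw$ and $v = a^j_\bw$, then $u$ is present before $v$ is added, forcing $i \leq j$ and hence $\T^\bw(u) = y_i \leq y_j = \T^\bw(v)$ by the ordering of $\bw$. For the strictness conditions, I would fix $y \in R$ and enumerate the positions $i_1 < \cdots < i_m$ with $y_{i_l} = y$; the ordered-restricted hypotheses on $\bw$ yield $x_{i_l} \preceq x_{i_{l+1}}$, with equality forcing $\overline{x_{i_l}} = \bar y$. Corollary \ref{addednodes} applied to each consecutive pair gives $a^{i_l}_\bw \nearrow a^{i_{l+1}}_\bw$ when $\bar y = \bar 0$ and $a^{i_l}_\bw \not\nearrow a^{i_{l+1}}_\bw$ when $\bar y = \bar 1$. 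In the first case column indices must be strictly increasing, since a column tie would place the later node strictly above the earlier in the same column, incompatible with Young-diagram growth. In the second case, combining $\not\nearrow$ with the addability constraint $a^{i_{l+1}}_\bw \not\leq a^{i_l}_\bw$ rules out all positions except those strictly below $a^{i_l}_\bw$, forcing strictly increasing row indices. Hence no two same-$y$ nodes share a column when $\bar y = \bar 0$, or a row when $\bar y = \bar 1$, which together with non-decreasingness yields the semistandardness of $\T^\bw$.

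To construct the inverse, given $(\L, \R) \in \textup{SStd}(L, R)$ of shape $\lambda \vdash k > 0$, let $y_k := \max(R)$. Because $y_k$ is maximal in $\R$, semistandardness forces the $y_k$-labeled nodes to form either a horizontal strip (if $\bar y_k = \bar 0$) or a vertical strip (if $\bar y_k = \bar 1$). I would take $u_k$ to be the node with $\R$-label $y_k$ of largest column index in the former case and largest row index in the latter; this is uniquely determined and is a removable corner of $[\lambda]$. The analysis above ensures that when $(\L, \R) = \textup{sRSK}(\bw)$, $u_k$ coincides with $a^k_\bw$. Setting $x_k := R(\L, \bar y_k, u_k)$, $\L' := (\L \xrightarrow{\bar y_k} u_k)$, and $\R' := \R$ with the entry at $u_k$ removed, the lemma asserting that $\varepsilon$-insertion and $\varepsilon$-extraction are mutual inverses gives $\L' = \T_{\bw'}$ and recovers $x_k$, where $\bw'$ is $\bw$ with its final biletter removed. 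Iterating defines the inverse map $\Phi$, and the identities $\Phi \circ \textup{sRSK} = \textup{id}$, $\textup{sRSK} \circ \Phi = \textup{id}$ follow by induction on $k$.

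The hard part will be the strictness analysis for $\T^\bw$, which requires the full strength of Corollary \ref{addednodes} together with a careful case split on the parity of $y$ and some additional reasoning about which strict inequalities survive. Once that strictness is in hand, the characterization of $a^k_\bw$ as the uniquely determined corner $u_k$ and the well-definedness of $\Phi$ on the full codomain follow essentially automatically; the remaining verifications (removability of $u_k$, and that $\Phi(\L, \R)$ is both ordered and restricted) reduce to straightforward checks using the horizontal- and vertical-strip structure of the maximal-label nodes in $\R$.
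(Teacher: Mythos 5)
Your proposal tracks the paper's own proof quite closely: show $\T_\bw$ is semistandard via Lemma \ref{semisemi}, show $\T^\bw$ is semistandard via Corollary \ref{addednodes} applied to consecutive same-$y$ insertions (the chain of added-node relations you describe is exactly the paper's inductive invariant, just phrased as strict column/row-index chains rather than as $\nearrow$ relations), and construct the inverse by extracting from the corner bearing the $<$-maximal right-label, exactly as the paper's $\textup{sRSK}^*$ does. Your observation that the added nodes $a^{i_l}_\bw, a^{i_{l+1}}_\bw$ are $\leq$-incomparable (since neither can lie weakly below the other if both are addable corners) is the right way to strengthen the one-sided conclusions of Corollary \ref{addednodes} into the bidirectional strictness you need.

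The one place where you underestimate the work is the final clause: you say that ``$\Phi(\L,\R)$ is both ordered and restricted'' reduces to ``straightforward checks using the horizontal- and vertical-strip structure.'' It does not. The strip structure of the $y$-labelled cells of $\R$ determines the \emph{order of removal} of those cells, but it says nothing by itself about the relative $\prec$-order of the \emph{letters extracted} from $\L$ in successive steps. Proving that those extracted letters come out in the required monotone order is a statement about how $\varepsilon$-extraction behaves under iteration, not about $\R$, and it requires another application of Corollary \ref{addednodes} (contrapositively: if two consecutive same-$y$ extractions produced letters violating the order constraint, the corollary would force the wrong $\nearrow$-relation between $u_i$ and $u_{i+1}$, contradicting the strip structure and semistandardness of $\R$). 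This is precisely how the paper closes the argument. Removability of $u_k$, by contrast, genuinely is a routine consequence of maximality of $y_k$ and semistandardness of $\R$, so that part of your ``straightforward'' claim is fine. With that one correction the plan is sound and parallels the published proof.
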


\begin{proof}
Let \(|L|=|R|=k\), and \(\bw = ((x_1,y_1), \ldots, (x_k,y_k)) \in \textup{RBiw}(L,R)^{\trianglelefteq}\).  We have that \(\T_\bw\) is semistandard by inductive application of Lemma \ref{semisemi} . Define \(\T^\bw_i\) by \(\T^\bw_i(a_\bw^j):=y_j\) for all \(1 \leq j \leq i\). By induction, assume:
\begin{enumerate}
\item \(\T^\bw_i\) is semistandard
\item If \(r<s\leq i\), \(y_r= y_s \), and \(\overline{y}_r = \overline 0\), then \(a_\bw^r \nearrow a_\bw^s\)
\item If \(r<s\leq i\), \(y_r = y_s\), and \(\overline{y}_r = \overline 1\), then \(a_\bw^s \nearrow a_\bw^r\).

\end{enumerate}
If \(y_{i+1} > y_i\), then \(\T_{i+1}^{\bw}\) automatically satisfies (i)-(iii). Assume \(y_{i+1} = y_i\).  Then either \(x_{i} \prec x_{i+1}\) or \(x_{i} = x_{i+1}\) and \(\overline{y}_{i+1} + \overline{x}_{i+1} = \overline 0\). Note that \(\T_{i+1}^\bw\) is non-decreasing since the upper row of \(\bw\) is non-decreasing. There are two cases:

\begin{enumerate}
\item[(a)] Assume \(\overline{y}_{i+1} = \overline 0\). Then (iii) holds, and \(\T^\bw_i\) is column-strict with respect to odd letters. Moreover by Corollary \ref{addednodes}, \(a_\bw^{i} \nearrow a_\bw^{i+1}\), so (ii) holds, and \(\T^\bw_i\) is row-strict with respect to even letters.

\item[(b)] Assume \(\overline{y}_{i+1} = \overline 1\). Then (ii) holds, and \(\T^\bw_i\) is row-strict with respect to even letters. Moreover by Corollary \ref{addednodes}, \(a_\bw^{i+1} \nearrow a_\bw^{i}\), so (iii) holds, and \(\T^\bw_i\) is column-strict with respect to odd letters.
\end{enumerate}

Thus, by induction \(\T_k^\bw = \T^\bw\) is semistandard. Thus \(\textup{sRSK}(\bw) \in \textup{SStd}(L,R)\).

Now let \((\L, \R) \in \textup{SStd}(L,R)\). We define \(\L_k=\L\), \(\R_k = \R\), and then for \(1 \leq i \leq k\) inductively define \(\R_{i-1}\) and \(\L_{i-1}\) in the following manner. Define \(y_i \in \Ylet\) to be the \(<\)-maximal element of \(\R_{i}\). If \(\overline{y}_i = \overline0\) (resp. \(\overline{y}_i =\overline1\)), let \(u_i\) be the rightmost (resp. bottommost) node in \(\R_{i}\) such that \(\R_{i}(u_i)=y_i\). Let \(\R_{i-1} = \R_i \backslash \{u_i\}\). Let \(\L_{i-1}= (\L_i \xrightarrow{ \overline{y}_i} u_i)\), and let \(x_i\) be the extracted letter. Then define 
\begin{align*}
\textup{sRSK}^*(\L,\R) = ((x_1,y_1),\ldots, (x_k,y_k)) 
\end{align*}

Let \(\bw:= \textup{sRSK}^*(\L,\R)\). By construction and Lemma \ref{inverses} we have that \(\T^j_\bw = \L_j\), \(\T_j^\bw = \R_j\), and \(u_j = a_\bw^j\) for all \(1 \leq j \leq k\). We argue by induction on \(i\) that 
\begin{align*}
\bw_i := ((x_1,y_1), \ldots, (x_i,y_i))
\end{align*}
is an {\em ordered} restricted \((\Xlet, \Ylet)\)-biword. By construction, \(y_{i+1} \geq y_i\), so \(\bw_{i+1}\) is an ordered restricted biword if \(y_{i+1} \neq y_i\). Assume \(y_i = y_{i+1}\). Then there are two cases. 
\begin{enumerate}

\item[(a)]  Assume \(\overline{y}_{i+1} =\overline 0\). If \(x_i \succ x_{i+1}\) or \(x_i = x_{i+1}\) and \(\overline{x}_i = \overline 1\), then by Corollary \ref{addednodes}, \(u_{i+1} = a_{\bw}^{i+1} \nearrow a_{\bw}^i = u_i\), which by the choice of \(u_{i+1}\) implies that \(u_i \Downarrow u_{i+1}\), \(\R(u_i) = \R(u_{i+1})\), and \(\overline{\R(u_i)}=\overline 0\), a contradiction, since \(\R\) is semistandard.
\item[(b)]  Assume \(\overline{x}_{i+1} =\overline1\). If \(x_i \succ x_{i+1}\) or \(x_i = x_{i+1}\) and \(\overline{x}_i = \overline{0}\), then by Corollary \ref{addednodes}, \(u_{i} = a_{\bw}^{i} \nearrow a_{\bw}^{i+1} = u_{i+1}\), which by the choice of \(u_{i+1}\) implies that \(u_i \Rightarrow u_{i+1}\), \(\R(u_i) = \R(u_{i+1})\), and \(\overline{\R(u_i)} = \overline1\), a contradiction, since \(\R\) is semistandard.
\end{enumerate}
Thus \(\bw_{i+1}\) is an ordered restricted biword. Thus by Lemma \ref{inverses}, \(\textup{sRSK}\) and \(\textup{sRSK}^*\) are mutual inverses on \(\textup{RBiw}(L,R)^{\trianglelefteq}\) and \(\textup{SStd}(L,R)\).
\end{proof}

\begin{Remark}
When \(\Xlet = \Ylet= \NN\), where \(<\) is the usual order on integers and every element is of even parity, the super RSK correspondence of Theorem \ref{SuperRSK} reduces to the classical RSK correspondence \cite{Knuth}.
\end{Remark}

\begin{Remark}\label{RSKinLitBSV}
As noted in \S\ref{introsec}, the existence of a bijection between the sets in Theorem \ref{SuperRSK} was proved by Bonetti, Senato, and Venezia \cite{BSV}, using a more straightforward insertion algorithm which yields a different bijection than the one in Theorem \ref{SuperRSK}. Our motivation in presenting this new bijection is in the direction of fully generalizing the symmetry property of classical RSK, which we do in \S\ref{symsec}.
\end{Remark}

\begin{Remark}\label{RSKinLitSW}
In \cite[\S 3]{SW}, Shimozono and White present a close relative to our super RSK correspondence---the algorithm they use to construct the upper and lower tableaux of an \((\Xlet, \Ylet)\)-biword (called in their paper a {\em doubly-colored biword}) is very similar in spirit to the super RSK algorithm presented here (see Remark \ref{insertinLitSW}). However, they work with the set of all (not just restricted) biwords, and their semistandard tableaux are defined to be row-weak and column-strict for {\em both} parities. Consequently, the fact that a bijective correspondence exists between these objects (as noted in \cite[Theorem 22]{SW}) can be deduced from classical RSK correspondence, while this is not true of the correspondence in Theorem \ref{SuperRSK}, which involves distinct (and distinctly-sized) sets of combinatorial objects.
\end{Remark}

Note that \(\textup{RBiw}(L,R)^{\trianglelefteq}\) is a set of orbit representatives for \(\textup{RBiw}(L,R)\) under the action of the symmetric group \(\mathfrak{S}_k \). For \(\bw  \in \textup{RBiw}(L,R)\), write \(\bw^{\trianglelefteq}\) for the unique element of \(\textup{RBiw}(L,R)^{\trianglelefteq}\) which belongs to the \(\mathfrak{S}_k\)-orbit of \(\bw\). By precomposing with the function \(\bw \mapsto \bw^{\trianglelefteq}\), we may extend \(\textup{sRSK}\) to a function \(\textup{sRSK}:\textup{RBiw}(L,R) \to \textup{SStd}(L,R)\) which is constant on \(\mathfrak{S}_k\)-orbits.

\section{Symmetry}\label{symsec}
In this section we prove that the super RSK algorithm defined in \S\ref{RSKsec} satisfies the symmetry property that holds for the classical RSK algorithm. In this section we assume that \((L,R)\) is an \((\Xlet, \Ylet)\)-content pair of length \(k\).

\subsection{Inversion}
Let \(\bw = ((x_1,y_1), \ldots, (x_k,y_k)) \in \textup{RBiw}(L,R)^{\trianglelefteq}\). Then there is a unique biword \(\bw^{\textup{inv}} \in \textup{RBiw}(R,L)^{\trianglelefteq}\) which consists of the biletters \((y_1,x_1), \ldots, (y_k,x_k)\). I.e., we construct \(\bw^{\textup{inv}}\) by swapping the entries of the biletters in \(\bw\), then reordering the biletters according to the ordering on biletters. We refer to \(\bw^\textup{inv}\) as the {\em inversion} of \(\bw\). For \((\L, \R) \in \textup{Tab}(L,R)\), write \((\L,\R)^\textup{inv}:=(\R,\L) \in \textup{Tab}(R,L)\).

\subsection{Standardizing biwords}
We will say a biword is {\em standard} if no letter occurring in the biword has multiplicity greater than one. For a multiset \(L\) of letters in \(\Xlet\), we define
\begin{align*}
L^\bullet = \{ x^{(i)}  \in \Xlet^{\bullet} \mid x \in \Xlet, 1 \leq i \leq \textup{mult}_L(x)\},
\end{align*}
where \(\textup{mult}_L(x)\) is the multiplicity of \(x\) in \(L\).

\begin{Definition}\label{wstd} Let \((L,R)\) be an \((\Xlet, \Ylet)\)-content pair of length \(k\), and let \(\bw \in \textup{RBiw}(L,R)^{\trianglelefteq}\). We construct a related biword in \(\textup{RBiw}(L^\bullet, R^\bullet)^{\trianglelefteq}\) as follows.

Label the distinct elements of \(L\) such that \(x_1 \prec x_2 \prec \cdots \prec x_s\), and label the distinct elements of \(R\) such that \(y_1 \prec y_2 \prec \cdots \prec y_t\). Let \(\ell_{i,j}\) be the multiplicity of \((x_i,y_j)\) in \(\bw\). Then define \(\bw^\bullet\) to be the unique biword in \(\textup{RBiw}(L^\bullet,R^\bullet)^{\trianglelefteq}\) which consists of the biletters
\begin{align*}
(x_i^{(\ell_{i,1} + \cdots + \ell_{i,j-1} + m)},y_j^{(\ell_{1,j} + \cdots + \ell_{i-1,j} + m)}) \textup{ for }i \in [1,s], j \in [1,t], m \in [1,\ell_{i,j}], \overline{y}_j = \bar 0,
\end{align*}
and
\begin{align*}
(x_i^{(\ell_{i,1} + \cdots + \ell_{i,j-1} + m)},y_j^{(\ell_{1,j} + \cdots + \ell_{i,j} +1-m)}) \textup{ for }i \in [1,s], j \in [1,t], m \in [1,\ell_{i,j}], \overline{y}_j = \bar 1.
\end{align*}
We call \(\bw^\bullet\) the {\em \(\bullet\)-standardization of \(\bw\)}.
\end{Definition}

Note that by construction, \(\bw^\bullet\) is a standard biword. Define \(\textup{SRBiw}(L^\bullet,R^\bullet)^{\trianglelefteq}\) as the set of {\em standard} restricted biwords in \(\textup{RBiw}(L^\bullet,R^\bullet)^{\trianglelefteq}\). Let \(\bullet: \textup{RBiw}(L,R)^{\trianglelefteq} \to \textup{SRBiw}(L^\bullet,R^\bullet)^{\trianglelefteq}\) be the map defined by \(\bw \mapsto \bw^\bullet\). Let \(\hat{\bullet}:\textup{SRBiw}(L^\bullet,R^\bullet)^{\trianglelefteq} \to \textup{Biw}(L,R)\) be given by `forgetting superscripts'. By definition of the orders on biletters we have that \(\hat{\bullet} \circ \bullet\) is the identity on \(\textup{RBiw}(L,R)^{\trianglelefteq}\).

\begin{Lemma}\label{bullinv}
For \(\bw \in \textup{RBiw}(L,R)^{\trianglelefteq}\), we have \((\bw^\bullet)^\textup{inv} = (\bw^\textup{inv})^\bullet\). 
\end{Lemma}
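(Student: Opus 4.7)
The plan is to unpack Definition \ref{wstd} on both sides of the claimed identity and verify that they consist of the same multiset of $(\Ylet^\bullet, \Xlet^\bullet)$-biletters. Since each of $(\bw^\bullet)^\textup{inv}$ and $(\bw^\textup{inv})^\bullet$ is the unique element of $\textup{RBiw}(R^\bullet, L^\bullet)^{\trianglelefteq}$ with its underlying multiset of biletters (the total order $\triangleleft$ supplies the ordering), it suffices to compare multisets.

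First I would set up notation as in Definition \ref{wstd}: let $x_1 \prec \cdots \prec x_s$ list the distinct elements of $L$, $y_1 \prec \cdots \prec y_t$ list the distinct elements of $R$, and $\ell_{i,j}$ denote the multiplicity of $(x_i, y_j)$ in $\bw$. Under inversion these same orderings list the distinct left and right letters of $\bw^\textup{inv}$, and the multiplicity of $(y_j, x_i)$ in $\bw^\textup{inv}$ is again $\ell_{i,j}$. Applying Definition \ref{wstd} to $\bw$ and then swapping entries of each biletter produces the multiset for $(\bw^\bullet)^\textup{inv}$, with the split according to $\overline{y}_j$; applying Definition \ref{wstd} directly to $\bw^\textup{inv}$ produces the multiset for $(\bw^\textup{inv})^\bullet$, with the split according to $\overline{x}_i$.

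Then I would compare the two multisets case-by-case on the parity pair $(\overline{x}_i, \overline{y}_j)$. In the even-even case the two formulas coincide without modification. In the two mixed-parity cases, restrictedness of $\bw$ forces $\ell_{i,j} \leq 1$; when the biletter is present only $m=1$ occurs, and an elementary identity such as $\ell_{i,1}+\cdots+\ell_{i,j-1}+1 = \ell_{i,1}+\cdots+\ell_{i,j}$ (using $\ell_{i,j} = 1$) reconciles the two superscript formulas. In the odd-odd case, the substitution $m \mapsto \ell_{i,j}+1-m$ is a bijection of $[1, \ell_{i,j}]$ with itself and converts one formula directly into the other.

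The main obstacle, although it is only notational, is tracking sums of the form $\ell_{i,1}+\cdots+\ell_{i,j-1}$ through the inversion: such a sum appearing as an $x_i$-superscript on one side corresponds after the swap to a sum $\ell'_{j,1}+\cdots+\ell'_{j,i-1}$ on the other side (where $\ell'_{j,i} = \ell_{i,j}$ is the multiplicity in $\bw^\textup{inv}$), and one must ensure that the reversal convention for odd right letters in Definition \ref{wstd} lines up correctly after the swap of roles between $\overline{x}_i$ and $\overline{y}_j$. Beyond this careful bookkeeping the argument carries no deeper content.
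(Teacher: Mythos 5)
Your proposal matches the paper's proof essentially exactly: both expand Definition \ref{wstd} on each side, reduce to comparing multisets of biletters (since the $\triangleleft$-ordering is forced), and run the same three-way case split on $(\overline{x}_i,\overline{y}_j)$ — even-even is immediate, mixed parity uses restrictedness to force $\ell_{i,j}\le 1$, and odd-odd uses the reversal $m\mapsto \ell_{i,j}+1-m$. The bookkeeping concern you flag about $\ell'_{j,i}=\ell_{i,j}$ is real but harmless, and the paper handles it implicitly in the same way.
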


\begin{proof}
If \(\ell_{i,j}\) is the multiplicity of \((x_i,y_j)\) in \(\bw\), then \(\ell_{i,j}\) is the multiplicity of \((y_j,x_i)\) in \(\bw^\textup{inv}\). Then \((\bw^\bullet)^\textup{inv}\) consists of the biletters
\begin{align*}
(y_j^{(\ell_{1,j} + \cdots + \ell_{i-1,j} + m)},x_i^{(\ell_{i,1} + \cdots + \ell_{i,j-1} + m)}) \textup{ for }i \in [1,s], j \in [1,t], m \in [1,\ell_{i,j}], \overline{y}_j = \bar 0,
\end{align*}
and
\begin{align*}
(y_j^{(\ell_{1,j} + \cdots + \ell_{i,j} +1-m)},x_i^{(\ell_{i,1} + \cdots + \ell_{i,j-1} + m)}) \textup{ for }i \in [1,s], j \in [1,t], m \in [1,\ell_{i,j}], \overline{y}_j = \bar 1.
\end{align*}
while \((\bw^\textup{inv})^\bullet\) consists of the biletters
\begin{align*}
(y_j^{(\ell_{1,j} + \cdots + \ell_{i-1,j} + m)},x_i^{(\ell_{i,1} + \cdots + \ell_{i,j-1} + m)}) \textup{ for }j \in [1,t], i \in [1,s], m \in [1, \ell_{i,j}], \overline{x}_i = \bar 0,
\end{align*}
and
\begin{align*}
(y_j^{(\ell_{1,j} + \cdots + \ell_{i-1,j} + m)},x_i^{(\ell_{i,1} + \cdots + \ell_{i,j} + 1-m)}) \textup{ for }j \in [1,t], i \in [1,s], m \in [1, \ell_{i,j}], \overline{x}_i = \bar 1.
\end{align*}

Assume that \((y_j^{(a)},x_i^{(b)})\) is a biletter of \((\bw^\bullet)^\textup{inv}\), for some \(j \in [1,t]\), \(i \in [1,s]\), and \(a,b \in \Z_{>0}\). We will show that \((y_j^{(a)}, y_i^{(b)})\) is a biletter in \((\bw^\textup{inv})^\bullet\) as well. We consider three cases:
\begin{enumerate}
\item[(a)] Assume that \(\overline{y}_j + \overline{x}_i = \bar 1\). Then \(\ell_{i,j} = 1\) since \(\bw\) is a restricted biword. Then \(a = \ell_{1,j} + \cdots + \ell_{i,j}\) and \(b = \ell_{i,1} + \cdots + \ell_{i,j}\), so the claim follows.

\item[(b)] Assume that \(\overline{y}_j = \overline{x}_i = \bar 0\). Then \(a=\ell_{1,j} + \cdots + \ell_{i-1,j} +m\) and \(b=\ell_{i,1} + \cdots + \ell_{i,j-1} +m\), for some \(m \in [1,\ell_{i,j}]\), so the claim follows.
\item[(c)] Assume that \(\overline{y}_j = \overline{x}_i = \bar 1\). Then \(a=\ell_{1,j} + \cdots + \ell_{i,j} +1-m\) and \(b=\ell_{i,1} + \cdots + \ell_{i,j-1} +m\), for some \(m \in [1,\ell_{i,j}]\). But then, setting \(n=\ell_{i,j}-m+1\), we have that \(n \in [1,\ell_{i,j}]\), and \(a=\ell_{1,j} + \cdots + \ell_{i-1,j} + n\) and \(b=\ell_{i,1} + \cdots + \ell_{i,j} +1 -n\), so the claim follows.
\end{enumerate}
Then in any case, every biletter of \((\bw^\bullet)^\textup{inv}\) appears in \((\bw^\textup{inv})^\bullet\). Since the letters in \(\bw^\bullet\) appear with multiplicity one, and the biwords \((\bw^\bullet)^\textup{inv}\) and  \((\bw^\textup{inv})^\bullet\) are ordered and have the same length, this completes the proof of the lemma.
\end{proof}

We may extend the `forget superscripts' map \(\hat \bullet: \Xlet^\bullet \to \Xlet\) to a map \( \hat\bullet:\textup{SStd}(L^\bullet, R^\bullet) \to \textup{Tab}(L,R)\) via `forgetting superscripts' of all entries in the tableaux.

\begin{Lemma}\label{bullXi}
For \(\bw \in \textup{RBiw}(L,R)^{\trianglelefteq}\) we have \(\hat\bullet(\textup{sRSK}(\bw^\bullet)) = \textup{sRSK}(\bw).\)
\end{Lemma}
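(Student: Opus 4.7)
The plan is to establish, by induction on $i \in \{0, 1, \ldots, k\}$, the strengthened statement $(*_i)$: \emph{$\T^i_{\bw^\bullet}$ is a $\bullet$-standardization of $\T^i_\bw$, and $a^i_{\bw^\bullet} = a^i_\bw$.} The base case $i=0$ is immediate since both tableaux are empty. Write $\bw = ((x_1, y_1), \ldots, (x_k, y_k))$ and $\bw^\bullet = ((x_1^{(a_1)}, y_1^{(b_1)}), \ldots, (x_k^{(a_k)}, y_k^{(b_k)}))$ using the superscripts prescribed by Definition~\ref{wstd}. For the inductive step, assuming $(*_{i-1})$, apply Lemma~\ref{standsame} to $\T = \T^{i-1}_\bw$, its $\bullet$-standardization $\T^\bullet = \T^{i-1}_{\bw^\bullet}$, the letter $y = x_i^{(a_i)}$, and the parity $\varepsilon = \overline{y}_i$. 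The lemma then delivers both halves of $(*_i)$ at once: $\T^i_{\bw^\bullet}$ is a $\bullet$-standardization of $\T^i_\bw$, and the two bumped node sequences (hence their terminal added nodes) coincide.

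The main obstacle is verifying the hypothesis of Lemma~\ref{standsame}: for every $z \in \T^{i-1}_{\bw^\bullet}$ with $\hat\bullet(z) = x_i$, one needs $z \prec x_i^{(a_i)}$ when $\overline{y}_i + \overline{x}_i = \overline{0}$, and $z \succ x_i^{(a_i)}$ when $\overline{y}_i + \overline{x}_i = \overline{1}$. Since insertion preserves the multiset of entries, any such $z$ equals $x_j^{(a_j)}$ for some $j < i$ with $x_j = x_i$, so the task reduces to comparing the integers $a_j$ and $a_i$. The ordering of $\bw$ forces $(x_j,y_j) \trianglelefteq (x_i,y_i)$, so either $y_j <_\Ylet y_i$ strictly, or $y_j = y_i$, in which case the restricted condition forces $\overline{x}_i + \overline{y}_i = \overline{0}$.

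A short case analysis on $(\overline{x}_i, \overline{y}_i)$, using the superscript formula in Definition~\ref{wstd} and the parity-dependent definition of $\prec$ on $\Xlet^\bullet$ (which reverses $<$ on odd letters), confirms the required $\prec$/$\succ$ relation in each combination. Roughly: the superscripts on the $x_i$-copies in $\bw^\bullet$ are assigned in $\prec$-order on the associated $y$-values, and within a repeated-biletter cluster the superscript order matches or reverses the biletter $\trianglelefteq$-order according to whether $\overline{y}_i = \overline{0}$ or $\overline{1}$; translating through $\prec$ on $\Xlet^\bullet$ handles all four parity combinations uniformly. This is the routine-but-tedious step where the opaque-looking parity-dependent formula in Definition~\ref{wstd} earns its keep.

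Finally, taking $i = k$ in $(*_k)$ gives $\hat\bullet(\T_{\bw^\bullet}) = \T_\bw$. For the recording tableaux, $\T^{\bw^\bullet}(a^i_{\bw^\bullet}) = y_i^{(b_i)}$ and $\T^\bw(a^i_\bw) = y_i$ by construction; since $a^i_{\bw^\bullet} = a^i_\bw$ for every $i$, applying $\hat\bullet$ entrywise yields $\hat\bullet(\T^{\bw^\bullet}) = \T^\bw$. Hence $\hat\bullet(\textup{sRSK}(\bw^\bullet)) = (\hat\bullet(\T_{\bw^\bullet}), \hat\bullet(\T^{\bw^\bullet})) = (\T_\bw, \T^\bw) = \textup{sRSK}(\bw)$, as claimed.
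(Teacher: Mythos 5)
Your proposal is correct and follows essentially the same route as the paper: induction on the insertion step, with Lemma~\ref{standsame} doing the work at each step, and the parity hypothesis of that lemma verified by unwinding the superscript formula in Definition~\ref{wstd}. The paper packages that verification as a standalone claim (``$x_i^{(a)}\prec x_i^{(c)}$ iff $\overline{x}_i+\overline{y}_{j'}=\bar 0$'' for $\triangleleft$-ordered biletters with the same bottom letter) and then invokes it inside the induction, whereas you fold the reduction to ``compare $a_j$ and $a_i$'' directly into the inductive step and sketch the four-way parity case analysis rather than writing it out; the substance is identical.
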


\begin{proof}
Recall from Definition \ref{wstd} that we label the distinct elements of \(L\) such that \(x_1 \prec x_2 \prec \cdots \prec x_s\), and label the distinct elements of \(R\) such that \(y_1 \prec y_2 \prec \cdots \prec y_t\). Assume that \((x_i^{(a)}, y_j^{(b)}) \triangleleft (x_i^{(c)}, y_{j'}^{(d)})\) are biletters in \(\bw^\bullet\). We first prove that \(x_i^{(a)} \prec x_i^{(c)}\) if and only if \(\overline{x}_i +\overline{y}_{j'}= \bar 0\). There are three cases to consider:
\begin{enumerate}
\item[(a)] Assume that \(j<j'\). Then we have \(y_j \prec y_{j'}\) and \(y_j^{(b)} < y_{j'}^{(d)}\), which implies that \(y_j < y_{j'}\). Then \(\overline{y}_{j'} = \bar 0\). By the definition of the \(\bullet\)-standardization \(\bw^\bullet\), we have, for some \(m \in [1,\ell_{i,j}]\), \(n \in [1,\ell_{i,j'}]\), 
\begin{align*}
a= \ell_{i,1} + \cdots + \ell_{i,j-1} +m < \ell_{i,1} + \cdots + \ell_{i,j-1} + \cdots + \ell_{i,j'-1}+n =c.
\end{align*}
Thus \(x_i^{(a)} < x_i^{(c)}\). The claim follows.
\item[(b)] Assume that \(j>j'\). Then we have \(y_j \succ y_{j'}\) and \(y_j^{(b)} < y_{j'}^{(d)}\), which implies that \(y_j > y_{j'}\). Then \(\overline{y}_{j'} = \bar 1\). Then by the definition of the \(\bullet\)-standardization \(\bw^\bullet\), we have, for some \(m \in [1,\ell_{i,j}]\), \(n \in [1,\ell_{i,j'}]\), 
\begin{align*}
a= \ell_{i,1} + \cdots + \ell_{i,j'-1} + \cdots + \ell_{i,j-1} +m > \ell_{i,1} + \cdots + \ell_{i,j'-1}+n =c.
\end{align*}
Thus \(x_i^{(a)} > x_i^{(c)}\). The claim follows.
\item[(c)] Assume that \(j=j'\) and \(\bar{y}_{j'} = \bar 0\). Then \(b<d\) since \(y_j^{(b)} = y_{j'}^{(b)}<y_{j'}^{(d)}\), and we have, for some \(m,n \in [1,\ell_{i,j'}]\),
\begin{align*}
\ell_{1,j'} + \cdots + \ell_{i-1,j'} +m = b < d = \ell_{1,j'} + \cdots + \ell_{i-1,j'} + n,
\end{align*}
so \(m<n\). Then we have
\begin{align*}
a= \ell_{i,1} + \cdots + \ell_{i,j'-1} +m < \ell_{i,1} + \cdots + \ell_{i,j'-1} + n = c,
\end{align*}
so \(x_i^{(a)} < x_i^{(c)}\). The claim follows.
\item[(d)] Assume that \(j=j'\) and \(\bar{y}_{j'} = \bar 1\). Then \(b<d\) since \(y_j^{(b)} = y_{j'}^{(b)}<y_{j'}^{(d)}\), and we have, for some \(m,n \in [1,\ell_{i,j'}]\),
\begin{align*}
\ell_{1,j'} + \cdots + \ell_{i,j'} +1-m = b < d = \ell_{1,j'} + \cdots + \ell_{i,j'} + 1-n,
\end{align*}
so \(m>n\). Then we have
\begin{align*}
a= \ell_{i,1} + \cdots + \ell_{i,j'-1} +m > \ell_{i,1} + \cdots + \ell_{i,j'-1} + n = c,
\end{align*}
so \(x_i^{(a)} > x_i^{(c)}\). The claim follows.
\end{enumerate}
Thus in any case, the claim follows. 

Let \({\tt T}_{\bw}^i\) (resp. \({\tt T}_{\bw^\bullet}^i\)) be the \(i\)th insertion tableaux in the Super RSK algorithm applied to \(\bw\) (resp. \(\bw^\bullet\)), and let \(a_{\bw}^i\) (resp. \(a_{\bw^\bullet}^i\)) be the added node of this insertion, using notation in \S\ref{RSKsec}. By induction, assume that \(\hat\bullet({\tt T}^i_{\bw^\bullet})\) is a \(\bullet\)-standardization of \( {\tt T}^i_{\bw}\), and \(a_{\bw^\bullet}^i = a_{\bw}^i\), for all \(i <n\). If \((x_i^{(c)}, y_{j'}^{(d)})\) is the \(n\)th biletter in \(\bw^\bullet\), then \((x_i,y_{j'})\) is the \(n\)th biletter in \(\bw\). If \(\overline{y}_{j'} + \overline{x}_i= \bar 0\), then, by the above claim, we have \(x_i^{(c)} \succ z\) for every \(z \in {\tt T}_{\bw^\bullet}^{n-1}\) such that \(\hat \bullet(z) = x_i\). On the other hand if \(\overline{y}_{j'} + \overline{x}_i= \bar 1\), then, by the above claim, we have \(x_i^{(c)} \prec z\) for every \(z \in {\tt T}_{\bw^\bullet}^{n-1}\) such that \(\hat \bullet(z) = x_i\). Therefore by Lemma \ref{standsame}, it follows that \({\tt T}_{\bw^\bullet}^n= ({\tt T}_{\bw^\bullet}^{n-1} \xleftarrow{\overline{y}_{j'}} x_i^{(c)}) \) is a \(\bullet\)-standardization of \( ({\tt T}_{\bw}^{n-1} \xleftarrow{\overline{y}_{j'}} x_i) = {\tt T}_{\bw}^n\), and \(a_{\bw^\bullet}^n = a_{\bw}^n\), as desired. Thus \(\hat \bullet({\tt T}_{\bw^\bullet}) = {\tt T}_{\bw}\) and \(\hat \bullet({\tt T}^{\bw^\bullet}) = {\tt T}^{\bw}\), so \(\hat \bullet(\textup{sRSK}(\bw^\bullet)) = \textup{sRSK}(\bw)\), proving the lemma.
\end{proof}

\subsection{Symmetry}  As noted in Remark \ref{RSKinLitSW}, Shimizono and White \cite{SW} define a super-analogue of the RSK algorithm which is identical to the super RSK algorithm presented here when restricted to standard biwords. Thus their symmetry result proves a special case of the symmetry of the \(\textup{sRSK}\) map:

\begin{Lemma}\label{SWsym}
If \(\bw \in \textup{RBiw}(L,R)^{\trianglelefteq}\) is a standard biword, then we have
\begin{align*}
\textup{sRSK}(\bw^{\textup{inv}}) = (\textup{sRSK}(\bw))^\textup{inv}.
\end{align*}
\end{Lemma}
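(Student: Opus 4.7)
The plan is to deduce this lemma directly from the corresponding symmetry result in \cite{SW}, leveraging the fact that the two insertion algorithms coincide on standard input. The first step is to observe, as noted in Remark \ref{insertinLitSW}, that $\varepsilon$-insertion and the doubly-mixed insertion of \cite{SW} are identical processes when applied to standard tableaux. Since $\bw$ is a standard biword, every intermediate insertion tableau $\T_\bw^i$ produced by the $\textup{sRSK}$ algorithm is a standard tableau (no repeated letters can occur), and the same is true of every intermediate tableau produced when applying $\textup{sRSK}$ to $\bw^{\textup{inv}}$, as well as for the versions of these processes defined in \cite{SW}. Consequently, the step-by-step construction of the insertion and recording tableaux under $\textup{sRSK}$ agrees with the corresponding construction in \cite{SW} on $\bw$ and on $\bw^{\textup{inv}}$.

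Next, I would verify that the notion of inversion used in \cite{SW} aligns with $\bw \mapsto \bw^{\textup{inv}}$ on standard biwords (in particular that the tiebreaking rules $\triangleleft$ coincide, which is automatic in the standard case since no two biletters share both coordinates). Granting this, the symmetry property for the \cite{SW} correspondence, applied to $\bw$ as a (restricted, in particular unmixed-repetition-free) doubly-colored biword, yields exactly the identity
\begin{align*}
\textup{sRSK}(\bw^{\textup{inv}}) = (\textup{sRSK}(\bw))^{\textup{inv}}.
\end{align*}

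The main obstacle, such as it is, is bookkeeping rather than mathematical depth: one must match up the conventions of \cite{SW} (doubly-colored biwords, row-weak/column-strict tableaux in both parities) with those used here, and confirm that these differences collapse in the standard case because there are no repeated letters to which the differing semistandardness conventions could apply. Once that identification is in place, the lemma is an immediate invocation of \cite[Theorem 22]{SW} (or the relevant symmetry statement therein), and no further insertion-theoretic analysis is required at this stage of the paper.
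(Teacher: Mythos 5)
Your proposal is correct and takes exactly the same route as the paper: observe that $\varepsilon$-insertion agrees with Shimozono--White doubly-mixed insertion on standard input, so for a standard biword the $\textup{sRSK}$ output matches theirs, and then invoke their symmetry theorem. The only nit is the citation: the relevant symmetry statement is \cite[Theorem 21(3),(6)]{SW}, not Theorem 22 (which gives bijectivity), but your hedge already covers that.
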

\begin{proof}
This follows from \cite[Theorem 21(3),(6)]{SW}.
\end{proof}

Now we extend this result to the general case.

\begin{Theorem} The following is a commuting diagram:
\begin{center}
\begin{tikzcd}[back line/.style={densely dotted}, row sep=1.7em, column sep=1.7em]
& \textup{RBiw}(L,R)^{\trianglelefteq} \ar{dl}[swap, sloped, near start]{ \textup{inv}\hspace{1mm}  } \ar{rr}{\bullet} \ar[back line]{dd}[near end]{\textup{sRSK}} 
  & & \textup{SRBiw}(L^\bullet,R^\bullet)^{\trianglelefteq} \ar{dd}{\textup{sRSK}} \ar{dl}[swap, sloped ,near start]{\textup{inv} \hspace{2mm}} \\
\textup{RBiw}(R,L)^{\trianglelefteq} \ar[crossing over]{rr}[near start]{\bullet} \ar{dd}[swap]{\textup{sRSK}} 
  & & \textup{SRBiw}(R^\bullet,L^\bullet)^{\trianglelefteq}  \\
& \textup{Tab}(L,R)  \ar[back line, sloped, below]{dl}{\textup{inv}} 
  & &\textup{SStd}(L^\bullet, R^\bullet)\ar[sloped]{dl}{\textup{inv}} \ar[swap,back line,near start]{ll}{\hat\bullet}\\
\textup{Tab}(R,L)  & &  \textup{SStd}(R^\bullet, L^\bullet) \ar[crossing over, leftarrow]{uu}[near start]{\textup{sRSK}} \ar{ll}[below]{\hat\bullet}
\end{tikzcd}
\end{center}
\end{Theorem}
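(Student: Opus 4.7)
The plan is to verify that each of the six two-dimensional faces of the cube commutes. Five of them follow immediately from results already established, and the remaining one---the general symmetry statement $\textup{sRSK}(\bw^{\textup{inv}}) = \textup{sRSK}(\bw)^{\textup{inv}}$ for arbitrary $\bw \in \textup{RBiw}(L,R)^{\trianglelefteq}$---is then obtained by a short diagram chase.

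Explicitly: the top face (that $\bullet$ commutes with $\textup{inv}$) is precisely Lemma~\ref{bullinv}. The bottom face (that $\hat\bullet$ commutes with $\textup{inv}$) is immediate, since $\hat\bullet$ acts componentwise on a tableau pair $(\L,\R)$, so $\hat\bullet((\L,\R)^{\textup{inv}}) = \hat\bullet(\R,\L) = (\hat\bullet\R, \hat\bullet\L) = (\hat\bullet(\L,\R))^{\textup{inv}}$. The two slanted faces that travel vertically via $\textup{sRSK}$ and horizontally via $\bullet$ and $\hat\bullet$ commute by Lemma~\ref{bullXi}: since $\hat\bullet \circ \bullet$ is the identity on $\textup{RBiw}(L,R)^{\trianglelefteq}$ (and on $\textup{RBiw}(R,L)^{\trianglelefteq}$), Lemma~\ref{bullXi} can be read as $\hat\bullet \circ \textup{sRSK} \circ \bullet = \textup{sRSK}$, which is exactly the commutativity of these two faces. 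Finally, the right face---$\textup{sRSK}$ commutes with $\textup{inv}$ on the standard biword sets---is Lemma~\ref{SWsym}.

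The remaining left face is then deduced by chasing around the cube. For $\bw \in \textup{RBiw}(L,R)^{\trianglelefteq}$,
\begin{align*}
\textup{sRSK}(\bw^{\textup{inv}})
&= \hat\bullet\bigl(\textup{sRSK}((\bw^{\textup{inv}})^\bullet)\bigr) \\
&= \hat\bullet\bigl(\textup{sRSK}((\bw^\bullet)^{\textup{inv}})\bigr) \\
&= \hat\bullet\bigl(\textup{sRSK}(\bw^\bullet)^{\textup{inv}}\bigr) \\
&= \bigl(\hat\bullet(\textup{sRSK}(\bw^\bullet))\bigr)^{\textup{inv}} \\
&= \textup{sRSK}(\bw)^{\textup{inv}},
\end{align*}
where the five equalities use, in order, Lemma~\ref{bullXi} applied to $\bw^{\textup{inv}}$, Lemma~\ref{bullinv}, Lemma~\ref{SWsym} applied to the standard restricted biword $\bw^\bullet$, the bottom face, and Lemma~\ref{bullXi} applied to $\bw$.

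I do not foresee any substantive obstacle: every step of the chase is a direct instance of a lemma already proved. The only small point worth flagging is that the invocation of Lemma~\ref{SWsym} requires $\bw^\bullet \in \textup{SRBiw}(L^\bullet,R^\bullet)^{\trianglelefteq}$, but this is built into Definition~\ref{wstd}, so the hypothesis is automatic.
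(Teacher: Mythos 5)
Your proposal is correct and follows essentially the same strategy as the paper: verify the top face via Lemma~\ref{bullinv}, the bottom face by the obvious componentwise argument, the front and back faces via Lemma~\ref{bullXi} (together with $\hat\bullet\circ\bullet = \textup{id}$), and the right face via Lemma~\ref{SWsym}, then chase around the cube to deduce the left face. Your five-step equational chase is exactly the paper's composition-of-maps computation written out elementwise.
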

\begin{proof}
The top face commutes by Lemma \ref{bullinv}. The bottom face commutes since `forgetting superscripts' then swapping tableaux clearly yields the same result as swapping tableaux and then `forgetting superscripts'. The front and back faces commute by Lemma \ref{bullXi}. The right face commutes by Lemma \ref{SWsym}. Thus we have
\begin{align*}
\textup{sRSK} \circ \textup{inv} &= \hat \bullet \circ \textup{sRSK} \circ \bullet\circ \textup{inv}
= \hat \bullet \circ \textup{sRSK} \circ \textup{inv} \circ \bullet\\
&= \hat \bullet \circ \textup{inv} \circ \textup{sRSK} \circ \bullet
= \textup{inv} \circ \hat \bullet \circ \textup{sRSK} \circ \bullet
= \textup{inv} \circ \textup{sRSK},
\end{align*}
so the left face commutes, proving the theorem.
\end{proof}

\begin{Corollary}[Super RSK symmetry]\label{corsym}
For all \(\bw \in \textup{RBiw}(L,R)^{\trianglelefteq}\) we have
\begin{align*}
\textup{sRSK}(\bw^{\textup{inv}}) = (\textup{sRSK}(\bw))^\textup{inv}.
\end{align*}
\end{Corollary}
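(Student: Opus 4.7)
The plan is to extract the Corollary directly from the commutative cube established in the preceding Theorem. That theorem asserts that the six faces of the cube commute, and the calculation at the end of its proof gives the equality of functors
\begin{align*}
\textup{sRSK} \circ \textup{inv} = \textup{inv} \circ \textup{sRSK}
\end{align*}
from $\textup{RBiw}(L,R)^{\trianglelefteq}$ to $\textup{Tab}(R,L)$. Evaluating both sides at an arbitrary $\bw$ yields the Corollary verbatim, so the proof is literally a one-line appeal to the theorem.

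If I were writing out the reasoning from scratch without invoking the cube as a black box, I would chase $\bw$ around it. Starting from $\bw \in \textup{RBiw}(L,R)^{\trianglelefteq}$, first pass to the $\bullet$-standardization $\bw^\bullet$, which is a \emph{standard} restricted biword. Lemma \ref{bullinv} (top face) guarantees that standardization is compatible with inversion, so $(\bw^\textup{inv})^\bullet = (\bw^\bullet)^\textup{inv}$. Applying the Shimozono--White symmetry, Lemma \ref{SWsym} (right face), to the standard biword $\bw^\bullet$ gives $\textup{sRSK}((\bw^\bullet)^\textup{inv}) = (\textup{sRSK}(\bw^\bullet))^\textup{inv}$. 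Finally, Lemma \ref{bullXi} (front and back faces) lets us forget superscripts on both sides, since $\textup{sRSK}$ intertwines with $\hat\bullet$; and the compatibility of $\hat\bullet$ with tableau-swap (the bottom face) is immediate. Chaining these five equalities produces the Corollary.

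There is no real obstacle at this stage: all of the work has already been done. The substantive content lies in the two standardization lemmas (Lemma \ref{bullinv} and Lemma \ref{bullXi}), and, one level deeper, in Lemma \ref{standsame}, which ensures that $\bullet$-standardization is compatible with $\varepsilon$-insertion in the sense that bumped node sequences are preserved. Once those ingredients are in place, the Corollary is a pure diagram chase and needs no further argument.
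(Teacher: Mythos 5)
Your proposal is correct and matches the paper exactly: the Corollary is a pointwise restatement of the left-face commutativity in the cube, which the theorem's proof derives by chaining the other five faces (Lemmas \ref{bullinv}, \ref{bullXi}, \ref{SWsym}, and the trivial bottom face) precisely as you describe. No further argument is needed.
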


\begin{Example}\label{bigex}
As in Example \ref{RSKex}, take the alphabet 
\begin{align*}
\Xlet = \Ylet = \{\hat 1 < 1 < \hat 2 < 2 < \hat 3 < 3\},
\end{align*}
where odd parity letters are indicated by carets, and the restricted biword
\begin{align*}
\bw = ((\hat 3, \hat 1), (1, \hat 2), (2,2), (3,2), (\hat 3, \hat 3), (\hat 3, \hat 3), (\hat 2, 3), (\hat 1, 3)).
\end{align*}
The inversion of \(\bw\) is
\begin{align*}
\bw^\textup{inv} = ((3 , \hat 1), ( \hat 2 , 1), ( 3, \hat 2), (2  , 2), (\hat 3 , \hat 3), ( \hat 3, \hat 3), (  \hat 1, \hat 3) , ( 2, 3)).
\end{align*}
We have
\begin{align*}
\textup{sRSK}(\bw)
&=\left(
\ytableausetup{centertableaux,,boxsize=1.6em}
\;\;\;
\begin{ytableau}
 \hat 1 & \hat 2 & \hat 3 & 3 \\
1 & 2 & \hat 3 \\
\hat 3
\end{ytableau}
\hspace{5mm}
,
\hspace{5mm}
\ytableausetup{centertableaux}
\begin{ytableau}
 \hat{1} & 2 & 2 & \hat{3} \\
 \hat{2} & 3 & 3\\
 \hat{3} 
\end{ytableau}
\;\;\;
\right),\\
\\
\textup{sRSK}(\bw^\textup{inv})
&=
\left(\;\;\;
\ytableausetup{centertableaux}
\begin{ytableau}
 \hat{1} & 2 & 2 & \hat{3} \\
 \hat{2} & 3 & 3\\
 \hat{3} 
\end{ytableau}
\hspace{5mm},
\hspace{5mm}
\ytableausetup{centertableaux}
\begin{ytableau}
 \hat 1 & \hat 2 & \hat 3 & 3 \\
1 & 2 & \hat 3 \\
\hat 3
\end{ytableau}
\;\;\;\right)
,
\end{align*}
so \(\textup{sRSK}(\bw^\textup{inv}) = \textup{sRSK}(\bw)^\textup{inv}\), as expected.

By way of comparison, consider the super-RSK algorithm (label it  \(\textup{sRSK}_{\textup{BSV}}\) to differentiate it from the algorithm in this paper) defined by Bonetti, Senato and Venezia \cite{BSV}. When we apply \(\textup{sRSK}_{\textup{BSV}}\) \(\bw\) and \(\bw^\textup{inv}\), we get (after reordering the biletters to agree with their combinatorial setup):
\begin{align*}
\textup{sRSK}_\textup{BSV}(\bw)
&=
\left(\;\;\;
\ytableausetup{centertableaux}
\begin{ytableau}
 \hat{1} & \hat 2 &3 \\
 1 & 2 \\
 \hat 3\\
 \hat 3\\
 \hat 3 
\end{ytableau}
\hspace{5mm},
\hspace{5mm}
\ytableausetup{centertableaux}
\begin{ytableau}
\hat 1 & \hat 2 & 2\\
2 & 3\\
\hat 3\\
\hat 3 \\ 
3
\end{ytableau}
\;\;\;\right)
,\\
\\
\textup{sRSK}_\textup{BSV}(\bw^\textup{inv})
&=
\left(\;\;\;
\ytableausetup{centertableaux}
\begin{ytableau}
\hat 1 & \hat 2 & 2 & 2\\
\hat 3 & 3 & 3\\
\hat 3
\end{ytableau}
\hspace{5mm},
\hspace{5mm}
\ytableausetup{centertableaux}
\begin{ytableau}
\hat 1 & \hat 2 & \hat 3 & 3\\
1 & 2 & \hat 3\\
\hat 3
\end{ytableau}
\;\;\;\right)
,
\end{align*}
so evidently symmetry does not generally hold for \(\textup{sRSK}_\textup{BSV}\). 

Finally consider the super-RSK algorithm (label it \(\textup{sRSK}_\textup{SW}\)) defined by Shimozono and White \cite{SW}. When we apply \(\textup{sRSK}_{\textup{SW}}\) to \(\bw\) and \(\bw^\textup{inv}\), we get (again after reordering biletters):
\begin{align*}
\textup{sRSK}_\textup{SW}(\bw)
&=
\left(\;\;\;
\ytableausetup{centertableaux}
\begin{ytableau}
\hat 1 & \hat 2 & \hat 3 & \hat 3 & \hat 3\\
1 & 2 & 3
\end{ytableau}
\hspace{5mm},
\hspace{5mm}
\ytableausetup{centertableaux}
\begin{ytableau}
\hat 1 & 2 & 2 & \hat 3 & \hat 3\\
\hat 2 & 3 & 3
\end{ytableau}
\;\;\;\right)
,\\
\\
\textup{sRSK}_\textup{SW}(\bw^\textup{inv})
&=
\left(\;\;\;
\ytableausetup{centertableaux}
\begin{ytableau}
\hat 1 & 2 & 2 & \hat 3 & \hat 3\\
\hat 2 & 3 & 3
\end{ytableau}
\hspace{5mm},
\hspace{5mm}
\ytableausetup{centertableaux}
\begin{ytableau}
\hat 1 & \hat 2 & \hat 3 & \hat 3 & \hat 3\\
1 & 2 & 3
\end{ytableau}
\;\;\;\right)
,
\end{align*}
so \(\textup{sRSK}_\textup{SW}(\bw^\textup{inv}) = \textup{sRSK}_\textup{SW}(\bw)^\textup{inv}\), as expected, given \cite[Theorem 21]{SW}. Note however that the tableaux output by the \(\textup{sRSK}_\textup{SW}\) algorithm are row-weak and column-strict with respect to {\em both} parities, a different flavor of `semistandard' than the notion defined in this paper.

\end{Example}

\end{document}